\theoremstyle{definition}
\newtheorem{thm}{Theorem}[subsection]
\newtheorem*{thm*}{Theorem}
\newtheorem{defi}[thm]{Definition}
\newtheorem*{defi*}{Definition}
\newtheorem*{acknowledge}{Acknowledgement}
\newtheorem{cor}[thm]{Corollary}
\newtheorem*{cor*}{Corollary}
\newtheorem{prop}[thm]{Proposition}
\newtheorem*{prop*}{Proposition}
\newtheorem{lem}[thm]{Lemma}
\newtheorem*{lem*}{Lemma}
\newtheorem{ex}[thm]{Example}
\newtheorem*{ex*}{Example}
\newtheorem*{rem*}{Remark}
\newtheorem*{hw*}{Homework}
\newcommand{\C}{\mathbb{C}}
\newcommand{\Clif}{\mathrm{Clif}}
\newcommand{\fix}{\mathrm{fix}}
\newcommand{\N}{\mathbb{N}}
\newcommand{\T}{\mathbb{T}}
\DeclareMathOperator{\Iso}{\mathrm{Iso}}
\DeclareMathOperator{\FIS}{\mathrm{FIS}}
\DeclareMathOperator{\FCIS}{\mathrm{FCIS}}
\DeclareMathOperator{\ab}{\mathrm{ab}}
\def\i<#1>{\langle #1 \rangle}
\def\l<#1>{\left\langle #1 \right\rangle}
\renewenvironment{proof}[1][\proofname]{\par
  \normalfont
  \topsep6\p@\@plus6\p@ \trivlist
  \item[\hskip\labelsep{\bfseries #1}\@addpunct{.}]\ignorespaces
}{%
  \endtrivlist
}
\renewcommand{\proofname}{\sc{Proof}}
\newcommand*{\defeq}{\mathrel{\rlap{%
                     \raisebox{0.3ex}{$\m@th\cdot$}}%
                     \raisebox{-0.3ex}{$\m@th\cdot$}}%
                     =}
\title[]{Invariant sets and normal subgroupoids of universal \'etale groupoids induced by congruences of inverse semigroups}
\author{Fuyuta Komura}
\address{Department of Mathematics, Faculty of Science and Technology, Keio University,
	3-–14-–1 Hiyoshi, Kohoku-ku, Yokohama, 223-–8522, Japan}
\address{ Mathematical Science Team
Center for Advanced Intelligence Project (AIP)
RIKEN}
\address{Phone: +81-45-566-1641+42706}
\address{Fax: +81-45-566-1642}
\email{fuyuta.k@keio.jp}
\subjclass[2010]{20M18, 22A22, 46L05}
\begin{document}
\maketitle
\begin{abstract}

For a given inverse semigroup,
one can associate an \'etale groupoid which is called the universal groupoid.
Our motivation is studying the relation between inverse semigroups and associated \'etale groupoids.
In this paper, we focus on congruences of inverse semigroups,
which is a fundamental concept to consider quotients of inverse semigroup.
We prove that a congruence of an inverse semigroup induces a closed invariant set and a normal subgroupoid of the universal groupoid.
Then we show that the universal groupoid associated to a quotient inverse semigroup is described by the restriction and quotient of the original universal groupoid.
Finally we compute invariant sets and normal subgroupoids induced by special congruences including abelianization.

\end{abstract}

\section{Introduction}

The relation among inverse semigroups, \'etale groupoids and C*-algebras has been revealed by many researchers.
Paterson associated the universal groupoid to an inverse semigroup in \cite{paterson2012groupoids}.
Paterson proved that the C*-algebras associated to the universal groupoids are isomorphic to the inverse semigroup C*-algebras.
Furthermore Paterson showed that the universal groupoid has the universal property about ample actions to totally disconnected spaces (also see \cite{STEINBERG2010689}).
In this paper we investigate a relation between inverse semigroups and the universal groupoids.
In particular we focus on congruences of inverse semigroups, which is a fundamental concept to consider quotient inverse semigroups.
Indeed congruences of inverse semigroups induce closed invariant subsets and normal subgroupoids of the universal groupoids.
Our main theorem is that the universal groupoid of a quotient inverse semigroup is isomorphic to the restriction and quotient of the original universal groupoid (Theorem \ref{Main theorem}).

This paper is organized as follows.
In Section 1, we review the notions of inverse semigroups and \'etale groupoids.
In Section 2, we deal with some least congruences,
although some statements may be well-known for specialists.
We also show that there is a one-to-one correspondence between normal congruences of semilattices and some invariant subsets (Corollary \ref{Correspondens between normal cong and invariant subset}).  
In Section 3, 
we prove that the universal groupoid of a quotient inverse semigroup is isomorphic to the restriction and quotient of the original universal groupoid.
Then we deal with some special congruences.
In Section 4, we apply propositions and theorems obtained in Section 2 and Section 3.
We prove that the number of fixed points of a Boolean action is equal to or less than the number of certain semigroup homomorphisms (Corollary \ref{the number of fixed point}).

\begin{acknowledge}
	The author would like to thank Prof.\ Takeshi Katsura for his support and encouragement.
	This work was supported by RIKEN Junior Research Associate Program.
\end{acknowledge}

\section{Preliminaries}

\subsection{Inverse semigroups}

We recall basic facts about inverse semigroups.
See \cite{Lawson} or \cite{paterson2012groupoids} for more details.
An inverse semigroup $S$ is a semigroup where for every $s\in S$ there exists a unique $s^*\in S$ such that $s=ss^*s$ and $s^*=s^*ss^*$.
We denote the set of all idempotents in $S$ by $E(S)\defeq\{e\in S\mid e^2=2\}$.
A zero element is a unique element $0\in S$ such that $0s=s0=0$ holds for all $s\in S$.
A unit is a unique element $1\in S$ such that $1s=s1=s$ holds for all $s\in S$.
An inverse semigroup dose not necessarily have a zero element or a unit. 
It is known that $E(S)$ is a commutative subsemigroup of $S$.
A subsemigroup of $S$ is a subset of $S$ which is closed under the product and inverse of $S$.
A subsemigroup $N$ of $S$ is said to be normal if $E(S)\subset N$ and $sns^*\in N$ hold for all $s\in S$ and $n\in N$.
An order on $E(S)$ is defined by $e\leq f$ if $ef=e$.

An equivalence relation $\nu$ on $S$ is called a congruence if $(s,t)\in \nu$ implies $(as,at),(sa,ta)\in \nu$ for all $s,t,a\in S$.
The quotient set $S/\nu$ becomes an inverse semigroup such that the quotient map $q\colon S\to S/\nu$ is a semigroup homomorphism.
A congruence $\rho$ on $E(S)$ is said to be normal if $(e,f)\in \rho$ implies $(ses^*,sfs^*)\in \rho$ for all $e,f\in E(S)$ and $s\in S$.
One of typical examples of normal congruences is $E(S)\times E(S)$.
Assume that $\rho$ is a normal congruence on $E(S)$.
Define an equivalence relation $\nu_{\rho,\min}$ on $S$ by declaring that $(s,t)\in \nu_{\rho,\min}$ if $(s^*s,t^*t)\in \rho$ and $se=te$ holds for some $e\in E(S)$ with $(e,s^*s)\in \rho$.
Then $\nu_{\rho,\min}$ is the minimum congruence on $S$ such that its restriction to $E(S)$ coincides with $\rho$.
One can see that $\nu_{E(S)\times E(S), \min}$ is the least congruence such that the quotient inverse semigroup becomes a group. 
We call $S/\nu_{E(S)\times E(S), \min}$ the maximal group image of $S$.

An inverse semigroup $S$ is said to be Clifford if $s^*s=ss^*$ holds for all $s\in S$.
A congruence $\nu$ on $S$ is said to be Clifford if $S/\nu$ is Clifford.

\subsection{\'Etale groupoids}

We recall that the notion of \'etale groupoids.
See \cite{renault1980groupoid} and \cite{asims} for more details.

A groupoid is a set $G$ together with a distinguished subset $G^{(0)}\subset G$,
source and range maps $d,r\colon G\to G^{(0)}$ and a multiplication 
\[
G^{(2)}\defeq \{(\alpha,\beta)\in G\times G\mid d(\alpha)=r(\beta)\}\ni (\alpha,\beta)\mapsto \alpha\beta \in G
\]
such that
\begin{enumerate}
	\item for all $x\in G^{(0)}$, $d(x)=x$ and $r(x)=x$ hold,
	\item for all $\alpha\in G$, $\alpha d(\alpha)=r(\alpha)\alpha=\alpha$ holds,
	\item for all $(\alpha,\beta)\in G^{(2)}$, $d(\alpha\beta)=d(\beta)$ and $r(\alpha\beta)=r(\alpha)$ hold,
	\item if $(\alpha,\beta),(\beta,\gamma)\in G^{(2)}$,
	we have $(\alpha\beta)\gamma=\alpha(\beta\gamma)$,
	\item\label{inverse} every $\gamma \in G$,
	there exists $\gamma'\in G$ which satisfies $(\gamma',\gamma), (\gamma,\gamma')\in G^{(2)}$ and $d(\gamma)=\gamma'\gamma$ and $r(\gamma)=\gamma\gamma'$.   
\end{enumerate}
Since the element $\gamma'$ in (\ref{inverse}) is uniquely determined by $\gamma$,
$\gamma'$ is called the inverse of $\gamma$ and denoted by $\gamma^{-1}$.
We call $G^{(0)}$ the unit space of $G$.
A subgroupoid of $G$ is a subset of $G$ which is closed under the inversion and multiplication. 
For $U\subset G^{(0)}$, we define $G_U\defeq d^{-1}(U)$ and $G^{U}\defeq r^{-1}(U)$.
We define also $G_x\defeq G_{\{x\}}$ and $G^x\defeq G^{\{x\}}$ for $x\in G^{(0)}$.
The isotropy bundle of $G$ is denoted by $\Iso(G)\defeq\{\gamma\in G\mid d(\gamma)=r(\gamma)\}$.
A subset $F\subset G^{(0)}$ is said to be invariant if $d(\alpha)\in F$ implies $r(\alpha)\in F$ for all $\alpha\in G$.
We say that $x\in G^{(0)}$ is a fixed point if $\{x\}\subset G^{(0)}$ is invariant.
If $G$ satisfies $G=\Iso(G)$,
$G$ is called a group bundle over $G^{(0)}$.
A group bundle $G$ is said to be abelian if $G_x$ is an abelian group for all $x\in G^{(0)}$.

A topological groupoid is a groupoid equipped with a topology where the multiplication and the inverse are continuous.
A topological groupoid is said to be \'etale if the source map is a local homeomorphism.
Note that the range map of an \'etale groupoid is also a local homeomorphism.
The next proposition easily follows from the definition of \'etale groupoids.

\begin{prop}\label{condition that groupoid hom is conti}
	Let $G$ and $H$ be \'etale groupoids.
	A groupoid homomorphism $\Phi\colon G\to H$ is a locally homeomorphism if and only if $\Phi|_{G^{(0)}}\colon G^{(0)}\to H^{(0)}$ is a locally homeomorphism.
\end{prop}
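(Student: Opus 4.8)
The plan is to treat the two implications separately; the only algebraic input is that a groupoid homomorphism $\Phi$ commutes with the source maps. Indeed, since $d(\gamma)=\gamma^{-1}\gamma$ and $\Phi$ preserves inverses and products, we get $d_H\circ\Phi=\Phi|_{G^{(0)}}\circ d_G$, where $d_G,d_H$ denote the source maps of $G,H$. Granting this, both directions become routine manipulations with the definition of a local homeomorphism.

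For the ``only if'' part, $G^{(0)}$ is open in $G$ (the unit space of an \'etale groupoid is open; alternatively one argues directly with the subspace topology), so $\Phi|_{G^{(0)}}$ is a restriction of a local homeomorphism to an open set and hence a local homeomorphism into $H$. Since its image lies in $H^{(0)}$ and the homeomorphic image of a sufficiently small open set is open in $H$, hence open in $H^{(0)}$, the corestriction $\Phi|_{G^{(0)}}\colon G^{(0)}\to H^{(0)}$ is a local homeomorphism.

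For the ``if'' part, assume $\Phi|_{G^{(0)}}$ is a local homeomorphism. Because $d_G$ is a local homeomorphism (\'etaleness) and compositions of local homeomorphisms are local homeomorphisms, the map $q\defeq\Phi|_{G^{(0)}}\circ d_G=d_H\circ\Phi\colon G\to H^{(0)}$ is a local homeomorphism. Now I would invoke the elementary fact that whenever $p\colon Y\to Z$ and $q\colon X\to Z$ are local homeomorphisms and $\phi\colon X\to Y$ is continuous with $p\circ\phi=q$, the map $\phi$ is a local homeomorphism: for $x\in X$ pick open sets $U\ni x$ and $W\ni\phi(x)$ on which $q$ and $p$ restrict to homeomorphisms onto open subsets of $Z$, put $U_0\defeq U\cap\phi^{-1}(W)$, note that $q(U_0)\subseteq p(W)$ automatically, and conclude $\phi|_{U_0}=(p|_W)^{-1}\circ q|_{U_0}$, a composition of homeomorphisms onto open subsets of $Y$. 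Applying this with $p=d_H$, $\phi=\Phi$ and the $q$ above gives that $\Phi$ is a local homeomorphism.

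The step deserving attention is the use of continuity of $\Phi$ in the ``if'' direction: this is part of the standing convention that homomorphisms of topological groupoids are continuous, and it cannot be dropped (a bundle homomorphism over the identity of the unit space that is discontinuous along a single fibre sitting over a non\-isolated unit is a counterexample once continuity is removed). So I would be careful to invoke continuity of $\Phi$ explicitly rather than to present it as a consequence of the hypothesis on $\Phi|_{G^{(0)}}$. Everything else is bookkeeping with the definition of a local homeomorphism.
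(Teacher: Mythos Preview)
The paper does not actually supply a proof of this proposition: it simply remarks that the statement ``easily follows from the definition of \'etale groupoids'' and moves on. Your argument is a correct and complete way to fill in those details, and the core idea---factoring through the commuting square $d_H\circ\Phi=\Phi|_{G^{(0)}}\circ d_G$ and then using the elementary cancellation lemma for local homeomorphisms---is exactly the kind of verification the author is gesturing at.

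Your remark about continuity is well taken and worth keeping. The paper's phrasing is slightly loose: in Proposition~1.2.2 it writes ``continuous groupoid homomorphism'' explicitly, which suggests that continuity is \emph{not} folded into the bare phrase ``groupoid homomorphism'' here. Without continuity of $\Phi$ the ``if'' direction is indeed false (your counterexample sketch is apt), so either one reads the proposition with an implicit continuity hypothesis, or one regards the conclusion ``$\Phi$ is a local homeomorphism'' as including continuity and then the statement needs the extra assumption. In the paper's sole application (the proof of Theorem~3.1.3) the map in question is visibly continuous, so the issue is harmless in context, but you are right to flag it.
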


\subsubsection{Quotient \'etale groupoids}
We recall that the notion of quotient \'etale groupoids.
See \cite[Section 2]{KOMURA} for more details.
Let $G$ be an \'etale groupoid.
We say that a subgroupoid $H\subset G$ is normal if
\begin{itemize}
\item $G^{(0)}\subset H \subset \Iso(G)$ holds, and
\item for all $\alpha\in G$, $\alpha H\alpha^{-1}\subset H$ holds.
\end{itemize}
For a normal subgroupoid $H\subset G$,
define an equivalence relation $\sim$ on $G$ by declaring that $\alpha\sim \beta$ if $d(\alpha)=d(\beta)$ and $\alpha\beta^{-1}\in H$ hold.
Then $G/H\defeq G/{\sim}$ becomes a groupoid such that the quotient map is a groupoid homomorphism.
If $H$ is open in $G$,
then $G/H$ is an \'etale groupoid with respect to the quotient topology.
Moreover the quotient map is a local homeomorphism  (see \cite[Subsection 2.1]{KOMURA} for these facts).

We have the fundamental homomorphism theorem.
The proof is left to the reader.
\begin{prop}\label{homomorphism theorem}
Let $G$ and $H$ be \'etale groupoids and $\Phi\colon G\to H$ be a continuous groupoid homomorphism which is a local homeomorphism.
Assume that $\Phi$ is injective on $G^{(0)}$.
Then $\ker \Phi\defeq \Phi^{-1}(H^{(0)})$ is an open normal subgroupoid of $G$.
Moreover there exist an isomorphism $\widetilde{\Phi}\colon G/\ker\Phi\to \Phi(G)$ which makes the following diagram commutative;
\begin{center}
	\begin{tikzpicture}[auto]
	\node (a) at (0,0) {$G$};
	\node (c) at (3,0){$H$};
	\node (d) at (0,-2) {$G/\ker\Phi$};
	\draw[->] (a) to node {$\Phi$} (c) ;
	\draw[->,swap] (a) to node {$Q$} (d);
	\draw[->,swap] (d) to node {$\widetilde{\Phi}$} (c);
	\end{tikzpicture}
	,
\end{center}
where $Q\colon G\to G/\ker\Phi$ denotes the quotient map.	
\end{prop}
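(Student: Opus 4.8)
The plan is to prove the two assertions in turn: first that $\ker\Phi$ is an open normal subgroupoid of $G$, and then that the induced map $\widetilde\Phi$ is a well-defined isomorphism of \'etale groupoids.

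For the first assertion, openness is automatic: since $H$ is \'etale, $H^{(0)}$ is open in $H$, and $\Phi$ is continuous, so $\ker\Phi=\Phi^{-1}(H^{(0)})$ is open. A groupoid homomorphism carries units to units, whence $G^{(0)}\subset\ker\Phi$. To see $\ker\Phi\subset\Iso(G)$, I would take $\gamma\in\ker\Phi$; then $\Phi(\gamma)\in H^{(0)}$ yields $\Phi(d(\gamma))=d(\Phi(\gamma))=\Phi(\gamma)=r(\Phi(\gamma))=\Phi(r(\gamma))$, and injectivity of $\Phi$ on $G^{(0)}$ forces $d(\gamma)=r(\gamma)$. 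Closure of $\ker\Phi$ under inversion and partial multiplication is clear because $H^{(0)}$ is a subgroupoid of $H$, and normality follows from the computation that, for $\alpha\in G$ and $\gamma\in\ker\Phi$ with $\alpha\gamma\alpha^{-1}$ defined, one has $d(\gamma)=d(\alpha)$, hence $\Phi(\gamma)=d(\Phi(\alpha))$, so $\Phi(\alpha\gamma\alpha^{-1})=\Phi(\alpha)\Phi(\alpha)^{-1}=r(\Phi(\alpha))\in H^{(0)}$.

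For the second assertion, I would first note that a local homeomorphism is an open map, so $\Phi(G)$ is open in $H$; being also a subgroupoid (here injectivity of $\Phi$ on $G^{(0)}$ lets composability be lifted), it is an \'etale groupoid. Since $\ker\Phi$ is an open normal subgroupoid, $G/\ker\Phi$ is \'etale, and because $\ker\Phi\subset\Iso(G)$ the quotient map $Q$ is injective on $G^{(0)}$, so we identify $(G/\ker\Phi)^{(0)}$ with $G^{(0)}$. Define $\widetilde\Phi(Q(\gamma))\defeq\Phi(\gamma)$. For well-definedness: if $Q(\gamma)=Q(\delta)$ then $d(\gamma)=d(\delta)$ and $\gamma\delta^{-1}\in\ker\Phi\subset\Iso(G)$, which also gives $r(\gamma)=r(\delta)$; then $\Phi(\gamma)\Phi(\delta)^{-1}=\Phi(\gamma\delta^{-1})\in H^{(0)}$ equals $r(\Phi(\delta))$, and multiplying on the right by $\Phi(\delta)$ gives $\Phi(\gamma)=\Phi(\delta)$. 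For injectivity: $\Phi(\gamma)=\Phi(\delta)$ forces $d(\gamma)=d(\delta)$ by injectivity on $G^{(0)}$, hence $\gamma\delta^{-1}$ is defined and $\Phi(\gamma\delta^{-1})=r(\Phi(\delta))\in H^{(0)}$, i.e.\ $\gamma\delta^{-1}\in\ker\Phi$, so $Q(\gamma)=Q(\delta)$. Surjectivity onto $\Phi(G)$, the homomorphism property of $\widetilde\Phi$, and the identity $\widetilde\Phi\circ Q=\Phi$ are immediate from the definition.

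Finally, to upgrade $\widetilde\Phi$ to an isomorphism of \'etale groupoids I would invoke Proposition \ref{condition that groupoid hom is conti}: the restriction of $\widetilde\Phi$ to unit spaces is $\Phi|_{G^{(0)}}$, a continuous injective local homeomorphism onto the open set $\Phi(G^{(0)})=\Phi(G)^{(0)}$, hence a homeomorphism; therefore $\widetilde\Phi$ is a local homeomorphism, and being bijective it is a homeomorphism, so $\widetilde\Phi$ is an isomorphism of topological groupoids. I expect the only genuinely delicate point, rather than routine bookkeeping, to be keeping the algebraic and topological claims synchronized: it is openness of $\Phi$ (not merely continuity) that makes $\Phi(G)$ \'etale and a subgroupoid, and one must check that the quotient topology on $G/\ker\Phi$ is exactly fine enough for $\widetilde\Phi$ to be a homeomorphism onto $\Phi(G)$.
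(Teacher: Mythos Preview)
Your argument is correct. The paper itself leaves this proposition unproved (``The proof is left to the reader''), so there is no authorial proof to compare against; your write-up supplies exactly the routine verification the author had in mind, and you correctly isolate the one nontrivial input---injectivity of $\Phi$ on $G^{(0)}$---using it both to place $\ker\Phi$ inside $\Iso(G)$ and to lift composability so that $\Phi(G)$ is a genuine subgroupoid. One small remark: you need not appeal to Proposition~\ref{condition that groupoid hom is conti} at the end; since $Q$ is an open surjection and $\Phi$ is open, $\widetilde\Phi$ is open directly from $\widetilde\Phi(U)=\Phi(Q^{-1}(U))$, and continuity of $\widetilde\Phi$ follows from $\widetilde\Phi\circ Q=\Phi$ with $Q$ a quotient map, so bijectivity finishes the job without invoking the proposition.
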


For an \'etale groupoid $G$,
the author of \cite{KOMURA} constructed the \'etale abelian group bundle $G^{\ab}$.
We briefly recall the definition of $G^{\ab}$.

We define $G_{\fix}\defeq G_F$, where $F\subset G^{(0)}$ denotes the set of all fixed points.
Then $G_{\fix}$ is a group bundle.
Define $[G_{\fix},G_{\fix}]\defeq \bigcup_{x\in F}[(G_{\fix})_x,((G_{\fix}))_x]$,
where $[(G_{\fix})_x,(G_{\fix})_x]$ denotes the commutator subgroup of $(G_{\fix})_x$.
Then $[G_{\fix},G_{\fix}]$ is an open normal subgroupoid of $G_{\fix}$ (\cite[Proposition 3.2.1]{KOMURA}).
Now we define $G^{\ab}\defeq G_{\fix}/[G_{\fix},G_{\fix}]$.

The reason why we consider $G^{\ab}$ is that the next theorem holds.
We denote the universal groupoid C*-algebra of $G$ by $C^*(G)$.
See \cite{paterson2012groupoids} for the definition of the universal groupoid C*-algebras.
\begin{thm}[{\cite[Theorem 3.2.4.]{KOMURA}}]
	Let $G$ be an \'etale groupoid such that $G^{(0)}$ is a locally compact Hausdorff space with respect to the relative topology of $G$.
	Then $C^*(G^{\ab})$ is the abelianization of $C^*(G)$ as a C*-algebra.
\end{thm}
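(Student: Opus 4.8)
The plan is to realise $C^*(G^{\ab})$ as an explicit commutative quotient of $C^*(G)$, and then to verify by a Gelfand-duality argument on characters that this quotient is universal among commutative quotients of $C^*(G)$, which is exactly the defining property of the abelianisation.

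First I would construct a canonical $*$-homomorphism $\Theta\colon C^*(G)\to C^*(G^{\ab})$ as a composite of two surjections. The set $F\subseteq G^{(0)}$ of fixed points is invariant, and it is closed in $G^{(0)}$: if $x_0\notin F$, choose $\gamma_0\in G$ with $d(\gamma_0)=x_0\ne r(\gamma_0)$ and an open bisection $U\ni\gamma_0$; since $G^{(0)}$ is Hausdorff we may shrink $U$ so that $d(U)$ and $r(U)$ are disjoint, and then every point of the open neighbourhood $d(U)$ of $x_0$ lies outside $F$. Hence $G^{(0)}\setminus F$ is open and invariant, so restriction of functions induces a surjection $C^*(G)\twoheadrightarrow C^*(G_F)=C^*(G_{\fix})$; composing with the surjection $C^*(G_{\fix})\twoheadrightarrow C^*(G_{\fix}/[G_{\fix},G_{\fix}])=C^*(G^{\ab})$ coming from the open normal subgroupoid $[G_{\fix},G_{\fix}]$ (\cite[Proposition 3.2.1 and Subsection 2.1]{KOMURA}) produces $\Theta$. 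Because $G^{\ab}$ is an \'etale abelian group bundle, convolution is fibrewise commutative, so $C^*(G^{\ab})$ is commutative; therefore $\Theta$ factors through a surjective $*$-homomorphism $\overline{\Theta}\colon C^*(G)^{\ab}\to C^*(G^{\ab})$.

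It remains to prove that $\overline{\Theta}$ is injective. As both algebras are commutative, by Gelfand duality this is equivalent to surjectivity of the dual map $\widehat{\overline{\Theta}}\colon\widehat{C^*(G^{\ab})}\to\widehat{C^*(G)^{\ab}}$ (which is automatically a closed embedding since $\overline{\Theta}$ is onto), i.e.\ to the assertion that every character $\chi$ of $C^*(G)$ factors through $\Theta$. Given $\chi$, its restriction to the nondegenerate subalgebra $C_0(G^{(0)})$ is $\ev_x$ for a unique $x\in G^{(0)}$; a short computation with a function $f$ supported on a bisection through a hypothetical non-isotropy arrow out of $x$, using $\chi(f^*\!\ast\! f)=|\chi(f)|^2=\chi(f\!\ast\! f^*)$, forces $x\in F$, after which $\chi$ annihilates the ideal $C^*(G_{G^{(0)}\setminus F})$ and hence descends to a character $\overline{\chi}$ of $C^*(G_{\fix})$. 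Writing an element of $C_c(G_{\fix})$ as a finite sum of pieces supported on open bisections and using multiplicativity of $\overline{\chi}$ on indicator functions of bisections that meet the isotropy group $\Gamma_x\defeq G_x\cap G^x$, one extracts a group homomorphism $\omega\colon\Gamma_x\to\T$ with $\overline{\chi}(f)=\sum_{\gamma\in\Gamma_x}f(\gamma)\,\omega(\gamma)$ for all $f$. Since $\omega$ annihilates commutators it factors through $\Gamma_x/[\Gamma_x,\Gamma_x]=(G^{\ab})_x$, and the resulting character of $C^*(G^{\ab})$ pulls back along $\Theta$ to $\chi$, because the fibrewise-summation formula for the quotient map $G_{\fix}\to G^{\ab}$ matches the two expressions term by term. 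This gives surjectivity of $\widehat{\overline{\Theta}}$, hence the isomorphism $C^*(G)^{\ab}\cong C^*(G^{\ab})$.

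The step I expect to be the genuine obstacle is the structural identification of an arbitrary character of $C^*(G)$ as $f\mapsto\sum_{\gamma\in\Gamma_x}f(\gamma)\,\omega(\gamma)$ for a fixed point $x$ and a group character $\omega$ of $G_x\cap G^x$: this is where the \'etale hypothesis and the local bisection structure really enter, and it must be extracted either from the disintegration theory of groupoid representations or, as sketched above, by a hands-on argument with cutoff functions and indicator functions of bisections. The remaining ingredients — invariance and closedness of $F$, commutativity of $C^*(G^{\ab})$, and the quotient behaviour of groupoid C*-algebras under restriction to a closed invariant set and under an open normal subgroupoid — are routine, the last two being covered by the cited results of \cite{KOMURA}.
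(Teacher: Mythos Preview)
This theorem is not proved in the present paper at all: it is merely quoted, with a citation to \cite[Theorem 3.2.4]{KOMURA}, as motivation for introducing $G^{\ab}$. There is therefore no in-paper proof to compare your attempt against.

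That said, your outline is the natural one and is essentially correct. A few remarks that may tighten the argument. First, once you know the base point $x$ of a character $\chi$ lies in $F$, the passage from $\overline{\chi}$ on $C^*(G_{\fix})$ to a group character of $(G_{\fix})_x$ is cleaner if you use that $G_{\fix}$ is a group bundle: then $C_0(F)$ sits \emph{centrally} in (the multiplier algebra of) $C^*(G_{\fix})$, so $C^*(G_{\fix})$ is a $C_0(F)$-algebra and $\overline{\chi}$ automatically kills the ideal $C_0(F\setminus\{x\})\cdot C^*(G_{\fix})$, hence factors through the fibre $C^*((G_{\fix})_x)=C^*(G_x)$. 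This bypasses the hands-on bisection bookkeeping you flag as the obstacle and avoids any implicit appeal to indicator functions (which would require an ample hypothesis you do not have). Second, your contradiction argument that $x$ must be a fixed point is fine as stated: with $f$ supported on a bisection $U$ with $d(U)\cap r(U)=\emptyset$ and $f(\gamma)\ne 0$, the identities $\chi(f^*\!\ast\! f)=(f^*\!\ast\! f)(x)=|f(\gamma)|^2$ and $\chi(f\!\ast\! f^*)=(f\!\ast\! f^*)(x)=0$ together with $\chi(f^*\!\ast\! f)=\chi(f\!\ast\! f^*)$ give the contradiction directly. With these two points in place, your Gelfand-duality reduction goes through without difficulty.
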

\subsection{\'Etale groupoids associated to inverse semigroup actions}

Let $X$ be a topological space.
We denote by $I_X$ the inverse semigroup of homeomorphisms between open sets in $X$.
An action $\alpha\colon S\curvearrowright X$ is a semigroup homomorphism $S\ni s\mapsto \alpha_s\in I_X$.
For $e\in E(S)$, we denote the domain of $\beta_e$ by $D_e^{\alpha}$.
Then $\alpha_s$ is a homeomorphism from $D_{s^*s}^{\alpha}$ to $D_{ss^*}^{\alpha}$.
In this paper, we always assume that $\bigcup_{e\in E(S)}D_e^{\alpha}=X$ holds. 

For an action $\alpha\colon S\curvearrowright X$,
we associate an \'etale groupoid $S\ltimes_{\beta}X$ as the following.
First we put the set $S*X\defeq \{(s,x) \in S\times X \mid x\in D^{\alpha}_{s^*s}\}$.
Then we define an equivalence relation $\sim$ on $S*X$ by $(s,x)\sim (t,y)$ if
\[
\text{$x=y$ and there exists $e\in E(S)$ such that $x\in D^{\alpha}_e$ and $se=te$}.  
\]
Set $S\ltimes_{\alpha}X\defeq S*X/{\sim}$ and denote the equivalence class of $(s,x)\in S*X$ by $[s,x]$.
The unit space $S\ltimes_{\alpha}X$ is $X$, where $X$ is identified with a subset of $S\ltimes_{\alpha}X$ via the injection
\[
 X\ni x\mapsto [e,x] \in S\ltimes_{\alpha}X, x\in D^{\alpha}_e.
\]
The source map and range maps are defined by
\[
d([s,x])=x, r([s,x])=\alpha_s(x)
\]
for $[s,x]\in S\ltimes_{\alpha}X$.
The product of $[s,\alpha_t(x)],[t,x]\in S\ltimes_{\alpha}X$ is $[st,x]$.
The inverse should be $[s,x]^{-1}=[s^*,\alpha_s(x)]$.
Then $S\ltimes_{\alpha}X$ is a groupoid in these operations.
For $s\in S$ and an open set $U\subset D_{s^*s}^{\alpha}$,
define 
\[[s, U]\defeq \{[s,x]\in S\ltimes_{\alpha}X\mid x\in U\}.\]
These sets form an open basis of $S\ltimes_{\alpha}X$.
In these structures,
$S\ltimes_{\alpha}X$ is an \'etale groupoid.

Let $S$ be an inverse semigroup.
Now we define the spectral action $\beta\colon S\curvearrowright \widehat{E(S)}$.
A character on $E(S)$ is a nonzero semigroup homomorphism from $E(S)$ to $\{0,1\}$,
where $\{0,1\}$ is an inverse semigroup in the usual product.
The set of all characters on $E(S)$ is denoted by $\widehat{E(S)}$.
We see $\widehat{E(S)}$ as a locally compact Hausdorff space with respect to the topology of pointwise convergence.
Define
\[N^{e}_P\defeq\{\xi\in\widehat{E(S)}\mid \xi(e)=1, \xi(p)=0 \text{ for all }p\in P\}\]
for $e\in E(S)$ and a finite subset $P\subset E(S)$,
then these sets form an open basis of $\widehat{E(S)}$. 
For $e\in E(S)$,
we define $D_e^{\beta}\defeq\{\xi\in\widehat{E(S)}\mid \xi(e)=1\}$.
For each $s\in S$ and $\xi\in D_{s^*s}^{\beta}$, define $\beta_s(\xi)\in D_{ss^*}^{\beta}$ by $\beta_s(\xi)(e)=\xi(s^*es)$, where $e\in E(S)$.
Then $\beta$ is an action $\beta\colon S\curvearrowright \widehat{E(S)}$, which we call the spectral action of $S$.
Now the universal groupoid of $S$ is defined to be $G_u(S)\defeq S\ltimes_{\beta}\widehat{E(S)}$.


\section{Certain least congruences}

It is known that every inverse semigroup admits the least Clifford congruence and the least commutative congruence.
For example, see \cite[Proposition III.\ 6.\ 7]{petrich1984inverse} for the least Clifford congruence and \cite{Piochi1986} for the least commutative congruence.
In this section,
we  that every inverse semigroup admits the least Clifford congruence and the least commutative congruence,
based on the spectrum.

\subsection{Invariant subset of $\widehat{E(S)}$}

Let $S$ be an inverse semigroup.
A subset $F\subset \widehat{E(S)}$ is said to be invariant if $\beta_s(F\cap D_{s^*s})\subset F$ holds for all $s\in S$.
Note that $F$ is invariant if and only if $F$ is invariant as a subset of the groupoid $G_u(S)$.
We omit the proof of the next proposition.
\begin{prop}
	Let $S$ be an inverse semigroup and $F\subset \widehat{E(S)}$ be an invariant subset.
	We define the set $\rho_F\subset E(S)\times E(S)$ of all pairs $(e,f)\in E(S)\times E(S)$ such that $\xi(e)=\xi(f)$ holds for all $\xi\in F$.
	Then $\rho_F$ is a normal congruence on $E(S)$.
\end{prop}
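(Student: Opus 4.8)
The plan is to verify the three defining properties of a normal congruence on $E(S)$ in turn, the only substantial one being the last. Throughout I use that a character $\xi\in\widehat{E(S)}$ is a semigroup homomorphism $E(S)\to\{0,1\}$, so it is order preserving: if $a\leq b$ in $E(S)$ then $\xi(a)=\xi(ab)=\xi(a)\xi(b)$, hence $\xi(b)=0$ forces $\xi(a)=0$.

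First, $\rho_F$ is an equivalence relation essentially by construction: it is the common pullback of equality on $\{0,1\}$ along the evaluation maps $\ev_\xi\colon E(S)\to\{0,1\}$, $e\mapsto\xi(e)$, for $\xi\in F$, so it inherits reflexivity, symmetry and transitivity from equality. Second, to see that $\rho_F$ is a congruence on the commutative semigroup $E(S)$, I would take $(e,f)\in\rho_F$ and $g\in E(S)$ and compute, for each $\xi\in F$, that $\xi(ge)=\xi(g)\xi(e)=\xi(g)\xi(f)=\xi(gf)$, so that $(ge,gf)\in\rho_F$; commutativity of $E(S)$ makes one-sided compatibility suffice. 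This step is routine.

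Third, and this is where the invariance of $F$ enters, I must show that $(e,f)\in\rho_F$ implies $(ses^*,sfs^*)\in\rho_F$ for every $s\in S$. A short preliminary calculation with the commuting idempotents $e,f,s^*s$ shows $ses^*,sfs^*\in E(S)$ and, using $s^*ss^*=s^*$, that $ses^*=(ses^*)(ss^*)$ and $sfs^*=(sfs^*)(ss^*)$, i.e.\ $ses^*\leq ss^*$ and $sfs^*\leq ss^*$. Now fix $\xi\in F$ and distinguish two cases according to the value $\xi(ss^*)\in\{0,1\}$. If $\xi(ss^*)=0$, then order preservation of $\xi$ together with $ses^*,sfs^*\leq ss^*$ forces $\xi(ses^*)=0=\xi(sfs^*)$, and there is nothing more to do. If $\xi(ss^*)=1$, then $\xi\in D^\beta_{ss^*}$, so $\eta\defeq\beta_{s^*}(\xi)$ is a well-defined element of $\widehat{E(S)}$, and by definition of the spectral action $\eta(g)=\xi(sgs^*)$ for every $g\in E(S)$; moreover, since $F$ is invariant and $\xi\in F\cap D^\beta_{ss^*}$, invariance applied to $s^*$ gives $\eta=\beta_{s^*}(\xi)\in F$. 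Hence $(e,f)\in\rho_F$ yields $\xi(ses^*)=\eta(e)=\eta(f)=\xi(sfs^*)$, as required.

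The only real obstacle is the bookkeeping forced by the partiality of the spectral action: $\beta_{s^*}$ is defined at $\xi$ only when $\xi(ss^*)=1$, so the single clean identity $\xi(ses^*)=\beta_{s^*}(\xi)(e)$ cannot by itself carry the whole argument. The case split above is precisely the device that patches this — on the complement, where $\beta_{s^*}(\xi)$ is undefined, both $ses^*$ and $sfs^*$ lie below $ss^*$ and are therefore sent to $0$ automatically — so that normality of $\rho_F$ reduces cleanly to invariance of $F$ under $\beta$.
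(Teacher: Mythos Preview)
Your proof is correct: the equivalence-relation and congruence properties are routine, and your case split on $\xi(ss^*)$ is exactly the right way to handle the partial domain of $\beta_{s^*}$ when verifying normality. The paper itself omits the proof of this proposition, so there is nothing to compare against; your argument supplies precisely the verification the paper leaves to the reader.
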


Let $S$ be an inverse semigroup and $\rho$ be a normal congruence on $E(S)$.
Moreover let $q\colon E(S)\to E(S)/{\rho}$ denote the quotient map.
For $\xi\in E(S)/{\rho}$,
we define $\widehat{q}(\xi)\in \widehat{E(S)}$ by $\widehat{q}(\xi)(e)=\xi(q(e))$, where $e\in E(S)$. 
Since $q$ is surjective, $\widehat{q}(\xi)$ is not zero.
Then $\widehat{q}\colon \widehat{E(S)/{\rho}}\to \widehat{E(S)}$ is a proper continuous map.
Therefore $F_{\rho}\defeq \widehat{q}(\widehat{E(S)/{\rho}})$ is a closed subset.

We say that $F\subset \widehat{E(S)}$ is multiplicative if the multiplication of two elements in $F$ also belongs to $F$ whenever it is not zero.
 
\begin{prop}\label{F_rho is a multiplicative closed}
	Let $S$ be an inverse semigroup and $\rho$ be a normal congruence on $E(S)$.
	Then $F_{\rho}\subset \widehat{E(S)}$ is a closed multiplicative invariant set.
\end{prop}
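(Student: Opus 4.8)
The plan is to treat the three assertions in turn, using throughout the description $\widehat{q}(\xi)=\xi\circ q$ (so a point of $F_\rho$ is exactly a character on $E(S)$ of the form $\xi\circ q$ with $\xi\in\widehat{E(S)/\rho}$). Closedness requires no new argument: as observed just before the statement, $\widehat{q}$ is a proper continuous map and $\widehat{E(S)}$ is locally compact Hausdorff, so $\widehat{q}$ is a closed map and $F_\rho=\widehat{q}(\widehat{E(S)/\rho})$ is closed.

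For multiplicativity I would take $\eta_1=\xi_1\circ q$ and $\eta_2=\xi_2\circ q$ in $F_\rho$ and note that the pointwise product satisfies $\eta_1\eta_2=(\xi_1\xi_2)\circ q$, where $\xi_1\xi_2\colon E(S)/\rho\to\{0,1\}$ is again the pointwise product. Since $\{0,1\}$ is commutative, $\xi_1\xi_2$ is a semigroup homomorphism, and if $\eta_1\eta_2\neq 0$ then $\xi_1\xi_2\neq 0$, so $\xi_1\xi_2\in\widehat{E(S)/\rho}$ and $\eta_1\eta_2=\widehat{q}(\xi_1\xi_2)\in F_\rho$.

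The one step that needs genuine input is invariance. Fix $s\in S$ and $\eta\in F_\rho\cap D^{\beta}_{s^*s}$, say $\eta=\xi\circ q$; the goal is $\beta_s(\eta)\in F_\rho$. Since $\beta_s$ maps $D^{\beta}_{s^*s}$ onto $D^{\beta}_{ss^*}$ we already know $\beta_s(\eta)\in\widehat{E(S)}$, with $\beta_s(\eta)(e)=\eta(s^*es)=\xi(q(s^*es))$ for $e\in E(S)$. The key claim is that $\beta_s(\eta)$ factors through $q$: if $(e,f)\in\rho$, then normality of $\rho$ yields $(s^*es,s^*fs)\in\rho$, hence $q(s^*es)=q(s^*fs)$ and therefore $\beta_s(\eta)(e)=\beta_s(\eta)(f)$. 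Thus there is a well-defined map $\zeta\colon E(S)/\rho\to\{0,1\}$ with $\zeta\circ q=\beta_s(\eta)$; because $q$ is a surjective homomorphism and $\beta_s(\eta)$ is a homomorphism, $\zeta$ is a homomorphism, and it is nonzero since $\zeta(q(ss^*))=\beta_s(\eta)(ss^*)=\eta(s^*ss^*s)=\eta(s^*s)=1$. Hence $\zeta\in\widehat{E(S)/\rho}$ and $\beta_s(\eta)=\widehat{q}(\zeta)\in F_\rho$.

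This factoring argument is the only nontrivial point, and within it the sole input beyond bookkeeping is the normality of $\rho$ — precisely what guarantees that the conjugation $e\mapsto s^*es$ respects the congruence. A slightly more conceptual alternative would be to set $T\defeq S/\nu_{\rho,\min}$, identify $E(T)$ with $E(S)/\rho$, and verify that $\widehat{q}$ intertwines the spectral action of $T$ with that of $S$ along the quotient $S\to T$, so that invariance of the image is automatic; but the direct computation above appears to be the shortest route.
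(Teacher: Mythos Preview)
Your proof is correct. Interestingly, the paper takes precisely the route you describe at the end as the ``conceptual alternative'': it writes the chain
\[
\beta_s(\xi)(e)=\xi(s^*es)=\eta(q(s^*es))=\beta_{q(s)}(\eta)(q(e))=\widehat{q}(\beta_{q(s)}(\eta))(e),
\]
thereby exhibiting $\beta_s(\xi)$ as $\widehat{q}(\beta_{q(s)}(\eta))\in F_\rho$. This implicitly treats $q$ as the quotient homomorphism $S\to S/\nu_{\rho,\min}$ (so that $q(s)$ makes sense and the spectral action of the quotient inverse semigroup is available), and then the intertwining of the two spectral actions gives invariance in one line. Your direct argument --- showing that $\beta_s(\eta)$ is constant on $\rho$-classes because normality gives $(s^*es,s^*fs)\in\rho$ whenever $(e,f)\in\rho$ --- has the virtue of making the use of the hypothesis completely explicit and of not relying on the quotient inverse semigroup $S/\nu_{\rho,\min}$, which the paper invokes only tacitly at this point. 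The paper's route, on the other hand, packages the same computation as a single equivariance statement and immediately names the preimage $\beta_{q(s)}(\eta)$.
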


\begin{proof}
	It is easy to show that $F_{\rho}\subset\widehat{E(S)}$ is a closed multiplicative set.
	We show that $F_{\rho}\subset \widehat{E(S)}$ is invariant.
	Take $\xi\in F_{\rho}$ and $s\in S$ with $\xi(s^*s)=1$.
	By the definition of $F_{\rho}$,
	there exists $\eta\in\widehat{E(S)/\rho}$ such that $\xi=\eta\circ q$, where $q\colon E(S)\to E(S)/\rho$ denotes the quotient map.
	Then we have 
	\[\beta_s(\xi)(e)=\xi(s^*es)=\eta(q(s^*es))=\beta_{q(s)}(\eta)(q(e))=\widehat{q}(\beta_{q(s)}(\eta))(e)\]
	for all $e\in E(S)$.
	Therefore it follows that $\beta_s(\xi)=\widehat{q}(\beta_{q(s)}(\eta))\in F_{\rho}$.
\qed
\end{proof}

\begin{prop}\label{nu=nu_F_nu}
Let $S$ be an inverse semigroup.
Then $\rho=\rho_{F_{\rho}}$ holds for every normal congruence $\rho$ on $E(S)$.
\end{prop}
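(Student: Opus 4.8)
The plan is to establish the two inclusions $\rho\subseteq\rho_{F_{\rho}}$ and $\rho_{F_{\rho}}\subseteq\rho$ separately. Throughout write $q\colon E(S)\to E(S)/\rho$ for the quotient map, and recall that by construction every element of $F_{\rho}$ is of the form $\widehat{q}(\eta)$ for some $\eta\in\widehat{E(S)/\rho}$, with $\widehat{q}(\eta)(e)=\eta(q(e))$.

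The inclusion $\rho\subseteq\rho_{F_{\rho}}$ is immediate: if $(e,f)\in\rho$ then $q(e)=q(f)$, so for every $\xi=\widehat{q}(\eta)\in F_{\rho}$ we get $\xi(e)=\eta(q(e))=\eta(q(f))=\xi(f)$, whence $(e,f)\in\rho_{F_{\rho}}$.

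For the reverse inclusion I would argue by contraposition: assume $(e,f)\notin\rho$, i.e.\ $a\defeq q(e)$ and $b\defeq q(f)$ are distinct in the semilattice $E\defeq E(S)/\rho$, and exhibit a character $\eta\in\widehat{E}$ with $\eta(a)\neq\eta(b)$; then $\widehat{q}(\eta)\in F_{\rho}$ separates $e$ and $f$, so $(e,f)\notin\rho_{F_{\rho}}$. The construction is the standard one: for $c\in E$ let $\eta_{c}\colon E\to\{0,1\}$ be the indicator function of the principal filter $\{x\in E\mid cx=c\}$. Using only commutativity and idempotency of $E$ one checks that $c(xy)=c$ holds if and only if $cx=c$ and $cy=c$, which is exactly the assertion that $\eta_{c}$ is a semigroup homomorphism; since $\eta_{c}(c)=1$ it is nonzero, hence a character. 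Finally, if $a\neq b$ then $ab\neq a$ or $ab\neq b$ (otherwise $a=ab=b$), and $\eta_{a}$ separates $a,b$ in the first case while $\eta_{b}$ does in the second. Applying this with $c=a=q(e)$ and $c=b=q(f)$ finishes the argument, and combining the two inclusions gives $\rho=\rho_{F_{\rho}}$.

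The only step that genuinely needs care is the multiplicativity of $\eta_{c}$, i.e.\ the elementary semilattice identity displayed above; everything else is bookkeeping. In other words, the substance of the proposition is the separation of points of a semilattice by its characters, applied to $E(S)/\rho$.
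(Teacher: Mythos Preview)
Your proof is correct and essentially identical to the paper's: both directions are handled the same way, and for the nontrivial inclusion both you and the paper use the principal-filter character $\eta_{c}(x)=1\iff x\geq c$ on $E(S)/\rho$ to separate distinct elements. The only cosmetic difference is that you phrase the reverse inclusion contrapositively (from $q(e)\neq q(f)$ produce a separating character), while the paper argues directly (from $(e,f)\in\rho_{F_{\rho}}$ apply $\eta_{q(e)}$ and $\eta_{q(f)}$ to deduce $q(e)\leq q(f)$ and $q(f)\leq q(e)$).
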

\begin{proof}
	Assume that $(e,f)\in\rho$.
	For all $\eta\in\widehat{E(S)/\rho}$,
	we have 
	\[\widehat{q}(\eta)(e)=\eta(q(e))=\eta(q(f))=\widehat{q}(\eta)(f).\]
	Therefore $(e,f)\in \rho_{F_{\rho}}$.
	
	To show the reverse inclusion,
	assume that $(e,f)\in \rho_{F_{\rho}}$.
	Define $\eta_{q(e)}\in \widehat{E(S)/{\rho}}$ by
	\begin{align*}
	\eta_{q(e)}(p)=
	\begin{cases}
	1 & (p\geq q(e)), \\
	0 & (\text{otherwise}),
	\end{cases}
	\end{align*}
	where $p\in E(S)/\rho$.
	By $(e,f)\in \rho_{F_{\rho}}$,
	we have $\eta_{q(e)}(q(f))=\eta_{q(e)}(q(e))=1$.
	Therefore $q(f)\geq q(e)$.
	Similarly we obtain $q(f)\leq q(e)$,
	so $q(e)=q(f)$ holds.
	Thus it follows that $(e,f)\in \rho$.
	\qed
\end{proof}

We say that $F\subset \widehat{E(S)}$ is unital if $F$ contains the constant function $1$.

\begin{lem}\label{Stone-Wierstrass type lemma}
	Let $S$ be an inverse semigroup and $F\subset \widehat{E(S)}$ be a unital multiplicative set.
	Assume that $F$ separates $E(S)$ (i.e.\ for $e,f\in E(S)$, $e=f$ is equivalent to the condition that  $\xi(e)=\xi(f)$ holds for all $\xi\in F$).
	Then $F$ is dense in $\widehat{E(S)}$. 
\end{lem}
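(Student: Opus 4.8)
The plan is to test density directly against the open basis $\{N^e_P\}$ of $\widehat{E(S)}$ recalled above: since these sets form a basis, it suffices to produce, for every $e\in E(S)$ and every finite $P\subset E(S)$ with $N^e_P\neq\emptyset$, some $\xi\in F\cap N^e_P$. The first step is to record a necessary condition for $N^e_P$ to be nonempty: if a character $\xi$ satisfies $\xi(e)=1$ and $\xi(p)=0$, then $pe\neq e$, because $pe=e$ would give $e\leq p$ and hence $\xi(p)=\xi(p)\xi(e)=\xi(pe)=\xi(e)=1$, a contradiction. So we may assume $pe\neq e$ for every $p\in P$.

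The crux of the argument is the claim that for each $p\in P$ there exists $\zeta_p\in F$ with $\zeta_p(e)=1$ and $\zeta_p(p)=0$. This is where the separation hypothesis enters, but applied to the pair $(e,ep)$ rather than to $(e,p)$: since $E(S)$ is commutative, $pe\neq e$ forces $ep\neq e$, so by hypothesis there is $\zeta_p\in F$ with $\zeta_p(e)\neq\zeta_p(ep)$. As $\zeta_p(ep)=\zeta_p(e)\zeta_p(p)$, the value $\zeta_p(e)$ cannot be $0$ (otherwise $\zeta_p(ep)=0=\zeta_p(e)$); hence $\zeta_p(e)=1$, and then $\zeta_p(p)=\zeta_p(ep)\neq 1$ forces $\zeta_p(p)=0$. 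The one step here that is not a routine semilattice manipulation is precisely this choice of pair: feeding $(e,ep)$ into the separation hypothesis automatically yields a character of the correct orientation, whereas $(e,p)$ on its own need not.

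It remains to combine the $\zeta_p$ into a single element of $F\cap N^e_P$. If $P\neq\emptyset$, put $\xi\defeq\prod_{p\in P}\zeta_p$ (pointwise product). Then $\xi(e)=\prod_{p\in P}\zeta_p(e)=1$, so $\xi$ is nonzero and therefore lies in $\widehat{E(S)}$; moreover every partial product of the $\zeta_p$ is likewise $1$ at $e$, hence nonzero, so an induction on $|P|$ using the multiplicativity of $F$ shows $\xi\in F$; and for each $q\in P$ the vanishing factor $\zeta_q(q)=0$ forces $\xi(q)=0$. Thus $\xi\in F\cap N^e_P$. In the remaining case $P=\emptyset$ we have $N^e_\emptyset=D_e^{\beta}$, and the constant character $1$ belongs to $F\cap D_e^{\beta}$ by unitality---this being the one point at which the unital hypothesis is used. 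Together these cases give the density of $F$, and all remaining verifications are the elementary computations indicated above.
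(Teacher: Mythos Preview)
Your proof is correct and follows essentially the same route as the paper's. The only cosmetic difference is that the paper first replaces $P$ by $eP$ (using $N^e_P=N^e_{eP}$) so that every $p$ satisfies $p\leq e$, and then invokes separation on the pair $(e,p)$; you instead apply separation directly to $(e,ep)$, which amounts to the same thing. Your treatment of the orientation step and of why the product remains in $F$ is in fact more explicit than the paper's.
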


\begin{proof}
	For $e\in E(S)$ and a finite subset $P\subset E(S)$,
	we define
	\[N^{e}_P\defeq\{\xi\in\widehat{E(S)}\mid \xi(e)=1, \xi(p)=0 \text{ for all }p\in P\}.\]
	Recall that these sets form an open basis of $\widehat{E(S)}$.
	Observe that $N^{e}_P=N^{e}_{eP}$ holds, where $eP\defeq\{ep\in E(S)\mid p\in P\}$.
	Now	it suffices to show that $F\cap N^{e}_P\not=\emptyset$ holds for nonempty $N^{e}_P$
	such that $p\leq e$ holds for all $p\in P$.
	
	In case that $P=\emptyset$ the constant function $1$ belongs to $F\cap N^{e}_P$.
	We may assume that $p\leq e$ holds for all $p\in P$.
	Since $N^{e}_P$ is nonempty,
	we have $e\not=p$ for all $p\in P$.
	Since $F$ separates $E(S)$,
	there exists $\xi_p\in F$ such that $\xi_p(e)=1$ and $\xi_p(p)=0$ for each $p\in P$.
	Define $\xi\defeq \prod_{p\in P} \xi_p$, then $\xi\in N^{e}_P\cap F$.
\qed	
\end{proof}
\begin{prop}\label{F=F_nu_F}
	Let $S$ be an inverse semigroup.
	Then $F=F_{\rho_F}$ holds for every unital multiplicative invariant closed set $F\subset \widehat{E(S)}$.
	
\end{prop}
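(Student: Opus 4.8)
The plan is to prove the two inclusions $F\subset F_{\rho_F}$ and $F_{\rho_F}\subset F$ separately. The first inclusion should be essentially formal: given $\xi\in F$, I want to produce $\eta\in\widehat{E(S)/\rho_F}$ with $\xi=\widehat{q}(\eta)=\eta\circ q$. Since by definition $(e,f)\in\rho_F$ means $\xi'(e)=\xi'(f)$ for all $\xi'\in F$, in particular $\xi$ is constant on $\rho_F$-classes, so $\xi$ factors through $q\colon E(S)\to E(S)/\rho_F$ as a map $\eta$; one checks $\eta$ is a nonzero semigroup homomorphism into $\{0,1\}$, hence lies in $\widehat{E(S)/\rho_F}$, and then $\xi=\widehat{q}(\eta)\in F_{\rho_F}$. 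This uses only the definitions and the fact that $q$ is surjective.

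For the reverse inclusion $F_{\rho_F}\subset F$, the idea is to use that $F$ is closed together with Lemma \ref{Stone-Wierstrass type lemma} (the Stone–Weierstrass–type density statement) applied inside a suitable quotient space. The key observation is that $\widehat{q}\colon \widehat{E(S)/\rho_F}\to\widehat{E(S)}$ is a homeomorphism onto its image $F_{\rho_F}$ (it is a proper continuous injection, and injectivity follows because $\widehat{q}(\eta_1)=\widehat{q}(\eta_2)$ forces $\eta_1\circ q=\eta_2\circ q$, and $q$ is onto). Under this identification, the set $\widehat{q}^{-1}(F\cap F_{\rho_F})\subset\widehat{E(S)/\rho_F}$ is a unital multiplicative subset: unitality because $F$ is unital and $\widehat{q}$ sends the constant function $1$ on $E(S)/\rho_F$ to the constant function $1$ on $E(S)$; multiplicativity because $F$ is multiplicative and $\widehat{q}$ respects pointwise products. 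Moreover $\widehat{q}^{-1}(F)$ separates $E(S)/\rho_F$: if $p,p'\in E(S)/\rho_F$ satisfy $\eta(p)=\eta(p')$ for every $\eta$ with $\widehat{q}(\eta)\in F$, writing $p=q(e)$, $p'=q(f)$ gives $\xi(e)=\xi(f)$ for all $\xi\in F$, i.e.\ $(e,f)\in\rho_F$, hence $p=p'$. So Lemma \ref{Stone-Wierstrass type lemma} (applied to the inverse semigroup $S$ replaced by a semigroup with semilattice $E(S)/\rho_F$, noting the lemma's proof only uses the semilattice structure) gives that $\widehat{q}^{-1}(F)$ is dense in $\widehat{E(S)/\rho_F}$, so $F\cap F_{\rho_F}$ is dense in $F_{\rho_F}$. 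Since $F$ is closed, $F\cap F_{\rho_F}$ is closed in $F_{\rho_F}$, hence $F\cap F_{\rho_F}=F_{\rho_F}$, i.e.\ $F_{\rho_F}\subset F$, which completes the proof.

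I expect the main obstacle to be the careful bookkeeping around applying Lemma \ref{Stone-Wierstrass type lemma}: the lemma is stated for $\widehat{E(S)}$ of an inverse semigroup $S$, but here I need it for $\widehat{E(S)/\rho_F}$, where $E(S)/\rho_F$ is a semilattice that is the idempotent semilattice of the inverse semigroup $S/\nu_{\rho_F,\min}$. One should either invoke that fact or observe directly that the proof of Lemma \ref{Stone-Wierstrass type lemma} only uses that $E(S)$ is a semilattice (commutative idempotent semigroup) and its spectrum, so it applies verbatim to $E(S)/\rho_F$. A second, minor technical point is verifying that $\widehat{q}$ is a homeomorphism onto $F_{\rho_F}$ rather than merely a continuous proper surjection onto it; properness plus the Hausdorff property of $\widehat{E(S)}$ gives that $\widehat{q}$ is a closed map, and combined with injectivity this yields the homeomorphism. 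Everything else is a routine unwinding of the definitions of $\rho_F$, $F_\rho$, $\widehat{q}$, and the notions unital/multiplicative/invariant.
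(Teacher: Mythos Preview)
Your proposal is correct and follows essentially the same approach as the paper: both show $F\subset F_{\rho_F}$ directly and then apply Lemma~\ref{Stone-Wierstrass type lemma} to $\widehat{q}^{-1}(F)\subset\widehat{E(S)/\rho_F}$ to obtain the reverse inclusion. The only cosmetic difference is that the paper notes $\widehat{q}^{-1}(F)$ is already closed (since $\widehat{q}$ is continuous and $F$ is closed), concluding $\widehat{q}^{-1}(F)=\widehat{E(S)/\rho_F}$ immediately, whereas you transfer density through the homeomorphism $\widehat{q}$ and close inside $F_{\rho_F}$; both routes are fine.
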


\begin{proof}
	It is easy to show that $F\subset F_{\rho_F}$.
	Let $q\colon E(S)\to E(S)/{\rho_F}$ denote the quotient map.
	Then the set $\widehat{q}^{-1}(F)$ is a unital multiplicative closed set which separates $E(S)/{\rho_F}$.
	By Lemma \ref{Stone-Wierstrass type lemma},
	$\widehat{q}^{-1}(F)=\widehat{E(S)/{\rho_F}}$ holds.
	Therefore, we have $F\supset\widehat{q}(\widehat{q}^{-1}(F))=\widehat{q}(\widehat{E(S)/{\rho_F}})=F_{\rho_F}$.
	\qed
\end{proof}

\begin{cor}\label{Correspondens between normal cong and invariant subset}
	Let $S$ be an inverse semigroup.
	There is a one-to-one correspondence between normal congruences on $E(S)$ and unital multiplicative invariant closed set in $\widehat{E(S)}$. 
\end{cor}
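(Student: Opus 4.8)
The plan is to assemble the one-to-one correspondence directly from the three propositions that have just been proved, since together they are exactly the two inverse identities one needs. Concretely, I would define two maps: $\Phi$ sending a normal congruence $\rho$ on $E(S)$ to the set $F_\rho = \widehat{q}(\widehat{E(S)/\rho})\subset\widehat{E(S)}$, and $\Psi$ sending a unital multiplicative invariant closed set $F$ to the normal congruence $\rho_F$ consisting of all pairs $(e,f)$ with $\xi(e)=\xi(f)$ for all $\xi\in F$.

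First I would check that these maps are well-defined between the two stated classes. For $\Phi$: by Proposition \ref{F_rho is a multiplicative closed} the set $F_\rho$ is a closed multiplicative invariant set, and it is unital because the constant function $1$ on $E(S)/\rho$ is a character whose composition with $q$ is the constant function $1$ on $E(S)$; hence $F_\rho$ lies in the target class. For $\Psi$: the fact that $\rho_F$ is a normal congruence on $E(S)$ is precisely the content of the (unnumbered) proposition preceding Proposition \ref{F_rho is a multiplicative closed}, applied with the invariant set $F$. So $\Phi$ and $\Psi$ indeed map between normal congruences on $E(S)$ and unital multiplicative invariant closed sets in $\widehat{E(S)}$.

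Next I would verify that $\Phi$ and $\Psi$ are mutually inverse. The identity $\Psi(\Phi(\rho)) = \rho_{F_\rho} = \rho$ for every normal congruence $\rho$ is exactly Proposition \ref{nu=nu_F_nu}. The identity $\Phi(\Psi(F)) = F_{\rho_F} = F$ for every unital multiplicative invariant closed set $F$ is exactly Proposition \ref{F=F_nu_F}. Hence $\Phi$ is a bijection with inverse $\Psi$, which is the asserted correspondence.

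There is essentially no obstacle here: the corollary is a formal consequence of the preceding results, and the only genuinely new (though trivial) point to record is that the hypotheses match up — namely that $F_\rho$ is unital and that $\rho_F$ is a normal congruence when $F$ is merely assumed invariant (not multiplicative or closed), both of which have already been established. I would therefore keep the proof to a few lines, simply naming the maps and citing Propositions \ref{nu=nu_F_nu} and \ref{F=F_nu_F} (together with Proposition \ref{F_rho is a multiplicative closed} and the earlier proposition on $\rho_F$) for the two round-trip identities and the well-definedness.
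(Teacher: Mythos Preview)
Your proposal is correct and follows exactly the approach of the paper, which simply cites Propositions~\ref{nu=nu_F_nu} and~\ref{F=F_nu_F} for the two round-trip identities. You have merely made explicit the maps $\Phi$ and $\Psi$ and the well-definedness checks that the paper leaves implicit.
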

\begin{proof}
	Just combine Proposition \ref{nu=nu_F_nu} and Proposition \ref{F=F_nu_F}.
	\qed
\end{proof}


\subsection{The least Clifford congruences}

Let $S$ be an inverse semigroup.
Recall that a congruence $\rho$ on $S$ is said to be Clifford if $S/\rho$ is Clifford.
For example,
$S\times S$ is a Clifford congruence on $S$.
In this subsection, we prove that every inverse semigroup admits the least Clifford congruence (Theorem \ref{Cliffordization of inverse semigroup}).
Our construction of the congruence is based on the fixed point of $\widehat{E(S)}$.

\begin{defi}
	Let $S$ be an inverse semigroup.
	A character $\xi\in \widehat{E(S)}$ is said to be fixed if $\xi(s^*es)=\xi(e)$ holds for all $e\in E(S)$ and $s\in S$ such that $\xi(s^*s)=1$.
	We denote the set of all fixed characters by $\widehat{E(S)}_{\fix}$.
\end{defi}

One can see that $\widehat{E(S)}_{\fix}$ is a closed subset of $\widehat{E(S)}$.
Moreover, $\widehat{E(S)}_{\fix}$ is a multiplicative set.
A fixed character is characterized as the next proposition.

\begin{prop}\label{extension of fixed character}
	Let $S$ be an inverse semigroup.
	Then,
	$\xi\in \widehat{E(S)}$ is fixed if and only if $\xi$ is extended to a semigroup homomorphism $\widetilde{\xi}\colon S\to \{0,1\}$.
	In this case,
	$\widetilde{\xi}\colon S\to \{0,1\}$ is a unique extension of $\xi$.
\end{prop}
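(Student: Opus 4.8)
The plan is to prove both implications directly from the definitions. For the \emph{if} direction, suppose $\xi$ extends to a semigroup homomorphism $\widetilde{\xi}\colon S\to\{0,1\}$, and take $e\in E(S)$, $s\in S$ with $\xi(s^*s)=1$. Then, using that $\widetilde{\xi}$ is multiplicative and that $\widetilde{\xi}(s^*)=\widetilde{\xi}(s)$ (which holds because $\{0,1\}$ is commutative and $s^* = s^*ss^*$ forces $\widetilde{\xi}(s^*)=\widetilde{\xi}(s^*)^2\widetilde{\xi}(s)$, so $\widetilde{\xi}(s^*)\le\widetilde{\xi}(s)$, and symmetrically; alternatively $\widetilde{\xi}(s)\widetilde{\xi}(s^*)=\widetilde{\xi}(ss^*)$ is an idempotent value), I compute $\xi(s^*es)=\widetilde{\xi}(s^*)\widetilde{\xi}(e)\widetilde{\xi}(s)=\widetilde{\xi}(s^*s)\widetilde{\xi}(e)=\xi(s^*s)\xi(e)=\xi(e)$. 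Hence $\xi$ is fixed. The only mild care needed here is the bookkeeping showing $\widetilde{\xi}(ss^*)=\widetilde{\xi}(s^*s)$, which follows since both equal $\widetilde{\xi}(s)\widetilde{\xi}(s^*)$ by commutativity of $\{0,1\}$.

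For the \emph{only if} direction, assume $\xi\in\widehat{E(S)}$ is fixed. I want to define $\widetilde{\xi}\colon S\to\{0,1\}$ by $\widetilde{\xi}(s)\defeq\xi(s^*s)$ and check it is a semigroup homomorphism extending $\xi$. It clearly extends $\xi$, since for $e\in E(S)$ we have $e^*e=e$, so $\widetilde{\xi}(e)=\xi(e)$. The substance is multiplicativity: for $s,t\in S$ I must show $\xi((st)^*st)=\xi(s^*s)\xi(t^*t)$, i.e.\ $\xi(t^*s^*st)=\xi(s^*s)\xi(t^*t)$. Here is where the fixed-character hypothesis enters. If $\xi(t^*t)=1$, then applying the fixed condition with the element $t$ and the idempotent $s^*s$ gives $\xi(t^*(s^*s)t)=\xi(s^*s)$, and since $\xi(t^*t)=1$ this equals $\xi(s^*s)\xi(t^*t)$, as desired. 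If instead $\xi(t^*t)=0$, then I need $\xi(t^*s^*st)=0$; this follows because $t^*s^*st\le t^*t$ in $E(S)$ (indeed $(t^*s^*st)(t^*t)=t^*s^*s(tt^*)t$, and one checks $t^*s^*stt^*t = t^*s^*st$ using $tt^*t=t$), so $\xi(t^*s^*st)\le\xi(t^*t)=0$ by monotonicity of characters with respect to $\leq$.

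For uniqueness of the extension: if $\widetilde{\xi}\colon S\to\{0,1\}$ is any semigroup homomorphism with $\widetilde{\xi}|_{E(S)}=\xi$, then for $s\in S$ we have $\widetilde{\xi}(s)^2 = \widetilde{\xi}(s^2)$ need not apply directly, but $\widetilde{\xi}(s)\geq \widetilde{\xi}(s)\widetilde{\xi}(s^*)=\widetilde{\xi}(ss^*)=\xi(ss^*)$ and also $\widetilde{\xi}(s) = \widetilde{\xi}(ss^*s)=\widetilde{\xi}(ss^*)\widetilde{\xi}(s)\le\widetilde{\xi}(ss^*)=\xi(ss^*)$, forcing $\widetilde{\xi}(s)=\xi(ss^*)$, which agrees with the formula above (note $\xi(ss^*)=\xi(s^*s)$ when $\xi$ is fixed, or more simply, the argument pins down $\widetilde{\xi}(s)$ uniquely regardless). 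So the extension is unique.

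The main obstacle I anticipate is the multiplicativity check in the \emph{only if} direction: it is the one place where the full strength of ``$\xi$ is fixed'' is used, and it requires splitting into the cases $\xi(t^*t)=1$ and $\xi(t^*t)=0$ and invoking the order-preservation of characters (an idempotent below another has value $\le$ under any character). Everything else is routine inverse-semigroup identity manipulation. One subtlety worth stating carefully is the asymmetry in the definition of ``fixed'' (it conditions on $\xi(s^*s)=1$), so in the $\xi(t^*t)=0$ case one genuinely cannot apply the defining condition and must instead argue via the order, as above.
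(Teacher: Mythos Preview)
Your proof is correct and follows essentially the same approach as the paper: the same definition $\widetilde{\xi}(s)=\xi(s^*s)$, the same case split on $\xi(t^*t)$, and the same use of the fixed condition in the case $\xi(t^*t)=1$. You are in fact more careful than the paper in the case $\xi(t^*t)=0$ (the paper simply asserts $\widetilde{\xi}(st)=0$ without mentioning the order argument), and your uniqueness argument, while slightly more elaborate than the paper's one-liner $\widetilde{\xi}(s)=\widetilde{\xi}(s)^2=\widetilde{\xi}(s^*s)=\xi(s^*s)$, reaches the same conclusion.
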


\begin{proof}
	If $\xi\in \widehat{E(S)}$ has an extension $\widetilde{\xi}\colon S\to\{0,1\}$,
	we have \[\widetilde{\xi}(s)=\widetilde{\xi}(s)^2=\widetilde{\xi}(s^*s)=\xi(s^*s)\] for all $s\in S$.
	Therefore,
	a semigroup homomorphism extension of $\xi$ is unique.
	
	It is obvious that $\xi$ is fixed if $\xi$ has a semigroup homomorphism extension.
	Assume that $\xi\in \widehat{E(S)}$ is fixed.
	Then,
	define $\widetilde{\xi}(s)\colon S\to\{0,1\}$ by $\widetilde{\xi}(s)\defeq \xi(s^*s)$ for $s\in S$.
	For $s,t\in S$,
	if $\xi(t^*t)=1$,
	we have $\widetilde{\xi}(st)=\xi(t^*s^*st)=\xi(s^*s)= \widetilde{\xi}(s) \widetilde{\xi}(t)$.
	If $\xi(t^*t)=0$,
	we have $\widetilde{\xi}(st)=\widetilde{\xi}(s)\widetilde{\xi}(t)=0$.
	Thus,
	$\widetilde{\xi}$ is a semigroup homomorphism.
	\qed
\end{proof}

\begin{defi}\label{Cliffordization of inverse semigroup}
	Let $S$ be an inverse semigroup.
	We define a normal congruence $\rho_{\Clif}\defeq \rho_{\widehat{E(S)}_{\fix}}$ on $E(S)$.
	Furthermore, we define a congruence $\nu_{\Clif}\defeq \nu_{\rho_{\Clif},\min}$ on $S$ and $S^{\Clif}\defeq S/{\nu_{\Clif}}$.
\end{defi}


\begin{lem}\label{pull back of the spec of clif is fixed}
	Let $S$ be an inverse semigroup , $\nu$ be a Clifford congruence on $S$ and $q\colon S\to S/\nu$ be the quotient map.
	Then a set \[F_{\nu}= \{\xi\circ q\in \widehat{E(S)}\mid \xi\in \widehat{E(S/\nu)}\}\] is contained in $\widehat{E(S)}_{\fix}$. 
	Moreover,
	$\widehat{E(S)}_{\fix}=\widehat{E(S)}$ holds if and only if $S$ is Clifford.
\end{lem}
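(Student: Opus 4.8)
The plan is to prove the two assertions separately. For the first assertion, I would take an arbitrary Clifford congruence $\nu$ on $S$ with quotient map $q\colon S\to S/\nu$, and an arbitrary character $\xi\in\widehat{E(S/\nu)}$; I must show $\xi\circ q\in\widehat{E(S)}_{\fix}$. Note first that $q$ restricts to a (surjective) semigroup homomorphism $E(S)\to E(S/\nu)$, so $\xi\circ q$ is indeed a character on $E(S)$ (nonzero since $q$ is surjective and $\xi$ is nonzero). To check the fixed-point condition, take $s\in S$ and $e\in E(S)$ with $(\xi\circ q)(s^*s)=1$, i.e.\ $\xi(q(s)^*q(s))=1$. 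Since $S/\nu$ is Clifford, $q(s)^*q(s)=q(s)q(s)^*$, and more importantly idempotents commute with everything in a Clifford semigroup in the following sense: for any $u\in S/\nu$ and $f\in E(S/\nu)$ one has $u^*fu = u^*u f$ — this is the standard identity in Clifford semigroups (every Clifford semigroup is a semilattice of groups, so $u^*fu$ and $f$ differ only by the idempotent $u^*u$). Applying this with $u=q(s)$ and $f=q(e)$ gives $q(s)^*q(e)q(s)=q(s)^*q(s)\,q(e)$, hence
\[
(\xi\circ q)(s^*es)=\xi\bigl(q(s)^*q(e)q(s)\bigr)=\xi\bigl(q(s)^*q(s)\bigr)\xi\bigl(q(e)\bigr)=\xi(q(e))=(\xi\circ q)(e),
\]
which is exactly the condition for $\xi\circ q$ to be fixed. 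The only subtlety here is justifying the Clifford identity $u^*fu=u^*uf$; I would either cite the semilattice-of-groups structure theorem for Clifford semigroups or give the one-line computation using that $uu^*$, $u^*u$ and $f$ all commute and that $u^*fu\le u^*u$.

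For the second assertion, the ``if'' direction is immediate: if $S$ is Clifford, apply the first part to $\nu$ the trivial (equality) congruence, so that $S/\nu=S$ and $F_\nu=\{\xi\circ\id\mid\xi\in\widehat{E(S)}\}=\widehat{E(S)}$; since $F_\nu\subset\widehat{E(S)}_{\fix}\subset\widehat{E(S)}$, equality holds. For the ``only if'' direction, suppose $\widehat{E(S)}_{\fix}=\widehat{E(S)}$, so every character of $E(S)$ is fixed and hence, by Proposition~\ref{extension of fixed character}, extends to a semigroup homomorphism $S\to\{0,1\}$. I want to conclude $s^*s=ss^*$ for all $s\in S$. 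Fix $s\in S$ and suppose for contradiction $s^*s\neq ss^*$; without loss of generality there is a character $\xi\in\widehat{E(S)}$ separating them — more precisely, since $N^{s^*s}_{\{s^*s\,ss^*\}}$ or its symmetric counterpart is nonempty (the two idempotents $s^*s$ and $ss^*$ are distinct, and characters separate $E(S)$), we may pick $\xi$ with, say, $\xi(s^*s)=1$ and $\xi(ss^*)=0$. By hypothesis $\xi$ is fixed, so $\xi(ss^*)=\xi(s(s^*s)s^*)=\xi\bigl((s^*)^*(s^*s)s^*\bigr)$. Writing $t=s^*$ we have $\xi(t^*t)=\xi(ss^*)=0$; but the fixed condition only constrains $\xi$ on the domain $\xi(t^*t)=1$, so I should instead apply the fixed condition directly to $s$: from $\xi(s^*s)=1$ and $\xi$ fixed we get $\xi(s\,e\,s^*)$... — the cleaner route is to use the extension $\widetilde\xi\colon S\to\{0,1\}$ and observe $\widetilde\xi(s^*s)=\widetilde\xi(s)$ and $\widetilde\xi(ss^*)=\widetilde\xi(s)$ (since $\widetilde\xi(s)\in\{0,1\}$ is idempotent, $\widetilde\xi(ss^*)=\widetilde\xi(s)\widetilde\xi(s^*)=\widetilde\xi(s)^2=\widetilde\xi(s)$ and likewise $\widetilde\xi(s^*s)=\widetilde\xi(s)$), whence $\xi(s^*s)=\widetilde\xi(s)=\xi(ss^*)$ for every $\xi$; since characters separate $E(S)$ this forces $s^*s=ss^*$.

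I expect the main obstacle to be the careful bookkeeping in the ``only if'' direction — specifically making sure the fixed-character condition is applied on the correct domain (the condition $\xi(s^*es)=\xi(e)$ is only asserted when $\xi(s^*s)=1$), which is why routing the argument through the semigroup-homomorphism extension $\widetilde\xi$ of Proposition~\ref{extension of fixed character} is preferable to a direct manipulation: the extension is a genuine homomorphism defined on all of $S$, so $\widetilde\xi(s^*s)=\widetilde\xi(s)^*\widetilde\xi(s)=\widetilde\xi(s)$ and $\widetilde\xi(ss^*)=\widetilde\xi(s)$ hold unconditionally, and the separation property of characters (used implicitly throughout the section) closes the argument. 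The first assertion, by contrast, is a routine verification once the Clifford identity $u^*fu=u^*uf$ is in hand.
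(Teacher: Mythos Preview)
Your proof is correct. The first assertion and the ``if'' direction match the paper almost verbatim: the paper derives the identity $q(s^*es)=q(s^*s)q(e)$ by the chain $q(s^*es)=q(es)^*q(es)=q(es)q(es)^*=q(ess^*e)=q(ss^*)q(e)=q(s^*s)q(e)$, which is exactly your ``Clifford identity $u^*fu=u^*uf$'' made explicit, and the ``if'' direction is handled by applying the first part to the trivial congruence in both.

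The ``only if'' direction is where you diverge from the paper. The paper does not route through Proposition~\ref{extension of fixed character}; instead it picks the single principal character $\xi_{s^*s}$ (defined by $\xi_{s^*s}(e)=1$ iff $e\ge s^*s$), notes $\xi_{s^*s}(s^*s)=1$, and applies the fixed condition directly to obtain $\xi_{s^*s}(ss^*)=\xi_{s^*s}(s^*(ss^*)s)=\xi_{s^*s}(s^*s)=1$, whence $ss^*\ge s^*s$; symmetry gives equality. Your approach---extend every $\xi$ to $\widetilde\xi\colon S\to\{0,1\}$, observe $\widetilde\xi(s^*s)=\widetilde\xi(s)=\widetilde\xi(ss^*)$, then invoke separation by characters---is equally valid and arguably more conceptual, but it spends the extension proposition and the (global) separation fact where the paper gets by with one explicit character and the bare definition of ``fixed''. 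The paper's route is thus slightly more self-contained, while yours makes transparent why the domain issue you flagged never arises: the homomorphism $\widetilde\xi$ lives on all of $S$, so no hypothesis $\xi(s^*s)=1$ is needed.
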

\begin{proof}
	Take $\xi\in \widehat{E(S/\nu)}$ and assume that $\xi(q(s^*s))=1$ for some $s\in S$.
	For all $e\in E(S)$,
	we have 
	\[q(s^*es)=q(es)^*q(es)=q(es)q(es)^*=q(ess^*e)=q(ss^*)q(e)=q(s^*s)q(e)\] since $S/\nu$ is Clifford.
	Now we have
	\[\xi\circ q(s^*es)=\xi(q(s^*es))=\xi(q(s^*s)q(e))=\xi(q(s^*s))\xi(q(e))=\xi\circ q(e).\]
	Therefore $\xi\circ q$ is a fixed character.
	
	Applying what we have shown for the trivial congruence $\nu=\{(s,s)\in S\times S\mid s\in S\}$,
	it follows that $\widehat{E(S)}_{\fix}=\widehat{E(S)}$ holds if $S$ is Clifford.
	Assume that $\widehat{E(S)}_{\fix}=\widehat{E(S)}$ holds and take $s\in S$.
	Define a character $\xi_{s^*s}\in\widehat{E(S)}$ by
	\begin{align*}
	\xi_{s^*s}(e)=
	\begin{cases}
	1 & (e\geq s^*s), \\
	0 & (\text{otherwise}),
	\end{cases}
	\end{align*}
	where $e\in E(S)$.
	Since we assume that $\widehat{E(S)}_{\fix}=\widehat{E(S)}$ and $\xi_{s^*s}=1$,
	we have \[\xi_{s^*s}(ss^*)=\xi_{s^*s}(s^*(ss^*)s)=\xi_{s^*s}(s^*s)=1.\]
	Then we have $s^*s\leq ss^*$.
	It follows that $s^*s\geq ss^*$ from the same argument.
	Now we have $s^*s=ss^*$ and $S$ is Clifford.
	\qed
\end{proof}

Now, we show that every inverse semigroup admits the Cliffordization.

\begin{thm}
	Let $S$ be an inverse semigroup.
	Then $\nu_{\Clif}$ is the least Clifford congruence on $S$.
\end{thm}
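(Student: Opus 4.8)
The plan is to verify the two defining properties of a least Clifford congruence: that $\nu_{\Clif}$ is itself a Clifford congruence, and that it is contained in every Clifford congruence on $S$.

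For the first, I would show that $(s^*s, ss^*) \in \rho_{\Clif}$ for every $s \in S$. By Proposition \ref{extension of fixed character}, each $\xi \in \widehat{E(S)}_{\fix}$ extends to a semigroup homomorphism $\widetilde{\xi} \colon S \to \{0,1\}$, and since $\{0,1\}$ is commutative we get $\xi(s^*s) = \widetilde{\xi}(s^*)\widetilde{\xi}(s) = \widetilde{\xi}(s)\widetilde{\xi}(s^*) = \xi(ss^*)$. Hence $(s^*s, ss^*) \in \rho_{\widehat{E(S)}_{\fix}} = \rho_{\Clif}$; as the restriction of $\nu_{\Clif} = \nu_{\rho_{\Clif},\min}$ to $E(S)$ is $\rho_{\Clif}$, this pair also lies in $\nu_{\Clif}$. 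Writing $q \colon S \to S^{\Clif}$ for the quotient map, we obtain $q(s)^*q(s) = q(s^*s) = q(ss^*) = q(s)q(s)^*$ for all $s$, so $S^{\Clif}$ is Clifford, i.e.\ $\nu_{\Clif}$ is a Clifford congruence.

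For the second, let $\nu$ be an arbitrary Clifford congruence on $S$ with quotient map $q \colon S \to S/\nu$. The key step is to prove $\rho_{\Clif} \subseteq \nu$ (viewing $\rho_{\Clif}$ as a set of pairs in $S \times S$). By Lemma \ref{pull back of the spec of clif is fixed}, every character of the form $\eta \circ q$ with $\eta \in \widehat{E(S/\nu)}$ belongs to $\widehat{E(S)}_{\fix}$. So if $(e,f) \in \rho_{\Clif} = \rho_{\widehat{E(S)}_{\fix}}$, then $\eta(q(e)) = (\eta\circ q)(e) = (\eta\circ q)(f) = \eta(q(f))$ for every $\eta \in \widehat{E(S/\nu)}$; since characters separate the idempotents of $S/\nu$ — for distinct idempotents $g \neq h$ the principal character $\xi_g$ given by $\xi_g(p)=1$ if $p\geq g$ and $\xi_g(p)=0$ otherwise, or $\xi_h$, distinguishes them — we conclude $q(e) = q(f)$, that is $(e,f) \in \nu$. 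Thus $\rho_{\Clif} \subseteq \nu$.

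Finally, given $(s,t) \in \nu_{\Clif} = \nu_{\rho_{\Clif},\min}$ there is $e \in E(S)$ with $(s^*s,t^*t) \in \rho_{\Clif}$, $(e,s^*s) \in \rho_{\Clif}$ and $se = te$. Applying $q$ and using $\rho_{\Clif} \subseteq \nu$ (so $q(e) = q(s^*s) = q(t^*t)$),
\[
q(s) = q(s)q(s^*s) = q(s)q(e) = q(se) = q(te) = q(t)q(e) = q(t)q(s^*s) = q(t)q(t^*t) = q(t),
\]
hence $(s,t) \in \nu$ and $\nu_{\Clif} \subseteq \nu$. (Alternatively, $\nu_{\Clif} \subseteq \nu$ follows from the evident monotonicity $\rho_1 \subseteq \rho_2 \Rightarrow \nu_{\rho_1,\min} \subseteq \nu_{\rho_2,\min}$ combined with the minimality property of $\nu_{\nu|_{E(S)},\min}$.) I expect the main obstacle to be this middle step, namely establishing that the fixed characters detect exactly the identifications a Clifford quotient imposes on the idempotents; this is precisely where Lemma \ref{pull back of the spec of clif is fixed} and the separation of idempotents by characters are needed.
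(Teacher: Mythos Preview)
Your proof is correct and follows essentially the same strategy as the paper: both parts rely on the same ingredients, namely Proposition~\ref{extension of fixed character} for the Clifford property of $\nu_{\Clif}$, and Lemma~\ref{pull back of the spec of clif is fixed} together with principal characters on $E(S/\nu)$ for minimality. The only difference is organizational---you first isolate the inclusion $\rho_{\Clif}\subseteq \nu|_{E(S)}$ via the general fact that characters separate idempotents, whereas the paper applies the principal character at $q(s^*s)$ directly to the specific idempotents $s^*s$, $t^*t$, $e$ arising from a given pair $(s,t)\in\nu_{\Clif}$.
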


\begin{proof}
	First, we show a congruence $\nu_{\Clif}$ is Clifford.
	Take $s\in S$ and $\xi\in \widehat{E(S)}_{\fix}$.
	Then one can see $\xi(s^*s)=\xi(ss^*)$.
	Therefore $(s^*s,ss^*)\in\nu_{\Clif}$ and $\nu_{\Clif}$ is a Clifford congruence.
	
	Let $\nu$ be a Clifford congruence and $q\colon S\to S/\nu$ be the quotient map.
	To show $\nu_{\Clif}\subset \nu$,
	take $(s,t)\in \nu_{\Clif}$.
	First,
	we show that $(s^*s,t^*t)\in \nu$.
	We define $\eta\in\widehat{E(S/{\nu})}$ by
	\begin{align*}
	\eta(e)=
	\begin{cases}
	1 & (e\geq q(s^*s) ), \\
	0 & (\text{otherwise}).
	\end{cases}
	\end{align*}
	By Lemma \ref{pull back of the spec of clif is fixed}, it follows that $\eta\circ q\in \widehat{E(S)}_{\fix}$.
	Since $(s,t)\in \nu_{\Clif}$,
	we have $1=\eta \circ q(s^*s)=\eta\circ q(t^*t)$,
	which implies $q (t^*t)\geq q(s^*s)$.
	The reverse inequality is obtained symmetrically,
	and therefore $q(t^*t)= q(s^*s)$ holds.
	
	Let $\eta\in \widehat{E(S/{\nu_{\Clif}})}$ be the above character.	
	Since $\eta\circ q$ is a fixed character and $(s,t)\in \nu_{\Clif}$,
	there exists $e\in E(S)$ such that $\eta\circ q(e)=1$ and $se=te$ hold.
	Since $\eta\circ q(e)=1$,
	we have $q(e)\geq q(s^*s)=q(t^*t)$ by the definition of $\eta$.
	Now we have $q(s)=q(s)q(e)=q(t)q(e)=q(t)$.
	Therefore $(s,t)\in\nu$.
	\qed 
\end{proof}

\begin{cor}
	Let $S$ be an inverse semigroup, $T$ be a Clifford inverse semigroup and $\varphi\colon S\to T$ be a semigroup homomorphism.
	Then there exists a unique semigroup homomorphism $\widetilde{\varphi}\colon S^{\Clif}\to T$ which makes the following diagram commutative;
	\begin{center}
		\begin{tikzpicture}[auto]
		\node (a) at (0,0) {$S$};
		\node (c) at (3,0){$T$};
		\node (d) at (0,-2) {$S^{\Clif}$};
		\draw[->] (a) to node {$\varphi$} (c) ;
		\draw[->,swap] (a) to node {$q$} (d);
		\draw[->,swap] (d) to node {$\widetilde{\varphi}$} (c);
		\end{tikzpicture}
		,
	\end{center}
where $q\colon S\to S^{\Clif}$ denotes the quotient map.
\end{cor}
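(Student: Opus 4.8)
The plan is to deduce this from the universal property of quotients by congruences, combined with the theorem just established that $\nu_{\Clif}$ is the least Clifford congruence on $S$. The pivotal observation is that the kernel congruence $\ker\varphi \defeq \{(s,t)\in S\times S \mid \varphi(s)=\varphi(t)\}$ of the homomorphism $\varphi$ is itself a Clifford congruence.

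First I would verify that $S/\ker\varphi$ is Clifford. Since semigroup homomorphisms between inverse semigroups automatically preserve inverses, for each $s\in S$ we have $\varphi(s^*s)=\varphi(s)^*\varphi(s)=\varphi(s)\varphi(s)^*=\varphi(ss^*)$, where the middle equality uses that $T$ is Clifford. Hence $(s^*s,ss^*)\in\ker\varphi$ for every $s\in S$, and in $S/\ker\varphi$ one gets $[s]^*[s]=[s^*s]=[ss^*]=[s][s]^*$; that is, $S/\ker\varphi$ is Clifford. Since $\nu_{\Clif}$ is the least Clifford congruence on $S$, it follows that $\nu_{\Clif}\subseteq\ker\varphi$.

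Next I would construct $\widetilde\varphi$. Because $\nu_{\Clif}\subseteq\ker\varphi$, the assignment $\widetilde\varphi(q(s))\defeq\varphi(s)$ for $s\in S$ is well defined: if $q(s)=q(t)$, then $(s,t)\in\nu_{\Clif}\subseteq\ker\varphi$, so $\varphi(s)=\varphi(t)$. It is a semigroup homomorphism since $q$ is a surjective homomorphism and $\varphi$ is a homomorphism, and by construction $\widetilde\varphi\circ q=\varphi$, giving the commutativity of the diagram. For uniqueness, any $\psi\colon S^{\Clif}\to T$ with $\psi\circ q=\varphi$ satisfies $\psi(q(s))=\varphi(s)=\widetilde\varphi(q(s))$ for all $s\in S$, and $q$ is surjective, so $\psi=\widetilde\varphi$.

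I do not anticipate a genuine obstacle; the only step requiring a little care is the verification that $S/\ker\varphi$ is Clifford, which reduces to the identity $\varphi(s^*s)=\varphi(ss^*)$ above. The remainder is the standard universal property of a quotient by a congruence applied to the inclusion $\nu_{\Clif}\subseteq\ker\varphi$.
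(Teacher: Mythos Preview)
Your argument is correct and is precisely the standard deduction the paper has in mind: the corollary is stated in the paper without proof, as an immediate consequence of the preceding theorem that $\nu_{\Clif}$ is the least Clifford congruence. Your verification that $\ker\varphi$ is a Clifford congruence (via $\varphi(s^*s)=\varphi(ss^*)$) and the subsequent factorization through the quotient is exactly the routine argument being left to the reader.
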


\subsection{The least commutative congruences}

We say that a congruence on inverse semigroup is commutative if the quotient semigroup is commutative.
In this subsection,
we show that every inverse semigroups admits the least commutative congruence.

We denote the circle group by $\T\defeq\{z\in\C\mid \lvert z\rvert=1\}$.
We see $\T\cup \{0\}$ as an inverse semigroup in the usual products.
By $\widehat{S}$,
we denote the set of all semigroup homomorphisms from $S$ to $\T\cup\{0\}$.

\begin{defi}\label{def of nu_ab}
	Let $S$ be an inverse semigroup.
	We define a commutative congruence $\nu_{\ab}$ on $S$ to be the set of all pairs $(s,t)\in S\times S$ such that $\varphi(s)=\varphi(t)$ holds for all $\varphi\in \widehat{S}$.
	We define $S^{\ab}\defeq S/{\nu_{\ab}}$.
\end{defi}

Let $S$ be a Clifford inverse semigroup and $e\in E(S)$.
We define $H_e\defeq \{s\in S\mid  s^*s=e\}$.
One can see that $H_e$ is a group in the operation inherited from $S$.
Note that the unit of $H_e$ is $e$.

In order to show that $\nu_{\ab}$ is the least commutative congruence,
we need the next lemma.

\begin{lem}\label{ext of character}
	Let $S$ be a Clifford inverse semigroup and $e\in E(S)$.
	Then,
	a group homomorphism $\varphi\colon H_e\to \T$ is extended to a semigroup homomorphism $\widetilde{\varphi}\colon S\to \T\cup\{0\}$.
\end{lem}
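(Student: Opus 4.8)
The plan is to use the Clifford structure to decompose $S$ as a ``disjoint bundle'' of groups $H_f$ indexed by $f \in E(S)$, and to push $\varphi$ forward along the natural maps relating these groups. First I would observe that since $S$ is Clifford, each $s \in S$ lies in exactly one $H_f$, namely $f = s^*s = ss^*$, so $S = \bigsqcup_{f \in E(S)} H_f$ as a set. The idempotent $e$ sits inside $E(S)$, and for $f \in E(S)$ multiplication by $e$ sends $H_f$ into $H_{ef}$: indeed if $s \in H_f$ then $(es)^*(es) = s^*e s = e s^* s = ef$ using commutativity of idempotents and the Clifford identity, and one checks $es \mapsto es$ is a group homomorphism $H_f \to H_{ef}$ carrying $f$ to $ef$. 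In particular, when $f \le e$ we have $ef = f$, so $H_f$ maps to $H_f$; the subtle point is that the relevant comparison is with $H_e$ itself, so I instead want maps \emph{into} $H_e$.

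The key construction: for $f \ge e$, multiplication by $e$ gives a group homomorphism $\pi_f \colon H_f \to H_e$, $s \mapsto es$ (here $ef = e$ since $e \le f$, so $(es)^*(es) = ef = e$). Define $\widetilde{\varphi}$ on $H_f$ by $\widetilde{\varphi}(s) \defeq \varphi(\pi_f(s)) = \varphi(es)$ when $f \ge e$, and $\widetilde{\varphi}(s) \defeq 0$ when $f = s^*s \not\ge e$. I would then verify that $\widetilde{\varphi}$ is a semigroup homomorphism $S \to \T \cup \{0\}$. Given $s \in H_f$, $t \in H_g$, the product $st$ lies in $H_{fg}$. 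There are two cases. If both $f \ge e$ and $g \ge e$, then $fg \ge e$ as well, and $\widetilde{\varphi}(st) = \varphi(e s t) = \varphi((es)(et))$ — here one uses $est = e s e t$, which holds because $e$ is idempotent and central among idempotents combined with the Clifford relations $se = $ ``$e$ times $s$ up to the bundle structure'' — and then $\varphi((es)(et)) = \varphi(es)\varphi(et) = \widetilde{\varphi}(s)\widetilde{\varphi}(t)$ since $es, et \in H_e$ and $\varphi$ is a group homomorphism. If, say, $f \not\ge e$, then $fg \le f$ need not be $\ge e$; I need $fg \not\ge e$, i.e. $e \le fg \Rightarrow e \le f$, which holds since $fg \le f$. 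Hence $\widetilde\varphi(st) = 0 = \widetilde\varphi(s)\widetilde\varphi(t)$. Finally, $\widetilde{\varphi}$ restricted to $H_e$ is $s \mapsto \varphi(es) = \varphi(s)$ since $e$ is the unit of $H_e$, so it genuinely extends $\varphi$.

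The main obstacle I anticipate is the bookkeeping in the multiplicativity check — specifically justifying the identity $e(st) = (es)(et)$ for $s \in H_f, t \in H_g$ with $f, g \ge e$. This is where the Clifford hypothesis is essential: in a general inverse semigroup $es \cdot et = e s e t$ cannot be simplified, but when $S$ is Clifford every element commutes with the idempotents it ``dominates'' in the appropriate sense, so $s e = e s e$-type manipulations let one slide the central idempotent $e$ through the product. I would isolate this as a short sublemma: for a Clifford inverse semigroup, if $e \in E(S)$ and $x^*x \ge e$ then $ex = xe$ and more generally $e$ commutes with such $x$; then $e(st) = (es)t = s(et) = \dots = (es)(et)$ follows by a two-line computation. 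Everything else is routine verification that the piecewise-defined map respects products and inverses and lands in $\T \cup \{0\}$.
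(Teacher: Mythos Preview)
Your proposal is correct and is essentially identical to the paper's proof: the paper defines $\widetilde{\varphi}(s)=\varphi(se)$ when $s^*s\ge e$ and $0$ otherwise, and leaves the verification to the reader, which is exactly the computation you outline. The only remark is that the ``sublemma'' you worry about is simpler than you suggest: in a Clifford inverse semigroup every idempotent is central (this is equivalent to the defining condition $s^*s=ss^*$), so $es=se$ holds for all $s\in S$ with no hypothesis on $s^*s$, and the identity $e(st)=(es)(et)$ is then immediate.
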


\begin{proof}
	Define 
	\begin{align*}
	\widetilde{\varphi}(s)=
	\begin{cases}
	\varphi(se) & (s^*s\geq e), \\
	0 & (\text{otherwise}).
	\end{cases}
	\end{align*}
	Then, one can check that $\widetilde{\varphi}$ is a semigroup homomorphism extension of $\varphi$.
	\qed 
\end{proof}

\begin{thm}\label{nu_ab is the least commutative congruence}
	Let $S$ be an inverse semigroup.
	Then,
	$\nu_{\ab}$ is the least commutative congruence on $S$.
\end{thm}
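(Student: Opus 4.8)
The plan is to present $\nu_{\ab}$ as the kernel congruence of the evaluation homomorphism $\iota\colon S\to\prod_{\varphi\in\widehat S}(\T\cup\{0\})$, $s\mapsto(\varphi(s))_{\varphi}$, and then to reduce the minimality assertion to a \emph{separation statement} for commutative inverse semigroups. First I would note that $\iota$ is a semigroup homomorphism whose kernel congruence $\{(s,t)\mid\iota(s)=\iota(t)\}$ is precisely $\nu_{\ab}$; hence $S/\nu_{\ab}$ is isomorphic, as a semigroup, to the subsemigroup $\iota(S)$ of the commutative semigroup $\prod_{\varphi}(\T\cup\{0\})$, so $\nu_{\ab}$ is a congruence whose quotient is commutative.

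For minimality, let $\nu$ be an arbitrary commutative congruence and $q\colon S\to T:=S/\nu$ the quotient map. Every $\psi\in\widehat T$ pulls back to $\psi\circ q\in\widehat S$, so $(s,t)\in\nu_{\ab}$ forces $\psi(q(s))=\psi(q(t))$ for all $\psi\in\widehat T$. It therefore suffices to prove that $\widehat T$ separates the points of the commutative inverse semigroup $T$: granting this, $q(s)=q(t)$, i.e.\ $(s,t)\in\nu$, whence $\nu_{\ab}\subseteq\nu$.

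To establish the separation statement, fix $s\neq t$ in $T$. Being commutative, $T$ is Clifford, so by Lemma \ref{pull back of the spec of clif is fixed} every character of $E(T)$ is fixed and hence, by Proposition \ref{extension of fixed character}, extends to a semigroup homomorphism $T\to\{0,1\}\subset\T\cup\{0\}$, $u\mapsto\xi(u^*u)$. If $s^*s\neq t^*t$, put $e=s^*s$, $f=t^*t$; since $e\neq f$ at least one of $ef\neq e$, $ef\neq f$ holds, say the former, and the character $\xi_e$ of $E(T)$ defined by $\xi_e(p)=1$ for $p\geq e$ and $\xi_e(p)=0$ otherwise satisfies $\xi_e(e)=1$ and $\xi_e(f)=0$ (the latter because $ef\neq e$), so its extension $\widetilde{\xi_e}$ has $\widetilde{\xi_e}(s)=1\neq0=\widetilde{\xi_e}(t)$. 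If instead $s^*s=t^*t=:e$, then $s$ and $t$ are distinct elements of the group $H_e=\{u\in T\mid u^*u=e\}$, which is abelian; I would choose a group homomorphism $\chi\colon H_e\to\T$ with $\chi(s)\neq\chi(t)$ and extend it to $\widetilde\chi\colon T\to\T\cup\{0\}$ via Lemma \ref{ext of character}, so that $\widetilde\chi(s)=\chi(se)=\chi(s)\neq\chi(t)=\widetilde\chi(t)$ (using $se=ss^*s=s$).

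The assembly through Lemma \ref{pull back of the spec of clif is fixed}, Proposition \ref{extension of fixed character}, and Lemma \ref{ext of character} is routine bookkeeping; the one genuinely non-formal ingredient, and the step I expect to demand the most care, is the choice of the separating character $\chi$ in the second case. This rests on the classical fact that a discrete abelian group is separated by its $\T$-valued characters (equivalently, that $\T$ is injective as an abelian group); once that is in hand the argument closes.
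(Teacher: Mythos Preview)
Your proposal is correct and follows essentially the same route as the paper. Both arguments reduce minimality to the statement that, in the commutative quotient $T=S/\nu$, elements are separated by homomorphisms into $\T\cup\{0\}$; both split this into an idempotent part (handled via Lemma~\ref{pull back of the spec of clif is fixed} and Proposition~\ref{extension of fixed character}) and a group part (handled via Lemma~\ref{ext of character} together with the fact that $\T$ separates points of a discrete abelian group). The only difference is organizational: you phrase the core step as a contrapositive separation property for $T$, whereas the paper argues directly that $q(s^*s)=q(t^*t)$ and then $q(s)=q(t)$; you also make explicit the verification that $\nu_{\ab}$ is itself a commutative congruence via the embedding $\iota$, which the paper leaves implicit in Definition~\ref{def of nu_ab}.
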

\begin{proof}
	Assume that $\nu$ is a commutative congruence.
	Let $q\colon S\to S/\nu$ denote the quotient map.
	In order to show $\nu_{\ab}\subset \nu$,
	take $(s,t)\in\nu_{\ab}$.
	
	First, we show that $q(s^*s)=q(t^*t)$.
	It suffices to show that $\xi(q(s^*s))=\xi(q(t^*t))$ holds for all $\xi\in \widehat{E(S/\nu)}$.
	Note that $\xi\circ q\in \widehat{E(S)}$ is a fixed point by Lemma \ref{pull back of the spec of clif is fixed}.
	Since $\xi\circ q$ is a restriction of an element in $\widehat{S}$ by Proposition \ref{extension of fixed character},
	$\xi(q(s^*s))=\xi(q(t^*t))$ follows from $(s^*s,t^*t)\in\nu_{\ab}$.
	
	In order to show that $q(s)=q(t)$,
	it suffices to show that $\psi(q(s))=\psi(q(t))$ for all group homomorphisms $\psi\colon H_{q(s^*s)}\to \T$,
	since $H_{q(s^*s)}=\{a\in S/\nu \mid a^*a=q(s^*s)\}$ is an abelian group.
	By Lemma \ref{ext of character},
	there exists a semigroup homomorphism extension $\widetilde{\psi}\in \widehat{S/\nu}$ of $\psi$.
	Since $\widetilde{\psi}\circ q \in \widehat{S}$ and $(s,t)\in\nu_{\ab}$,
	we have $\psi(q(s))=\psi(q(t))$.
	Therefore,
	$q(s)=q(t)$ holds.
	\qed
\end{proof}

\begin{cor}
		Let $S$ be an inverse semigroup, $T$ be a commutative inverse semigroup and $\varphi\colon S\to T$ be a semigroup homomorphism.
	Then there exists a unique semigroup homomorphism $\widetilde{\varphi}\colon S^{\Clif}\to T$ which makes the following diagram commutative;
	\begin{center}
		\begin{tikzpicture}[auto]
		\node (a) at (0,0) {$S$};
		\node (c) at (3,0){$T$};
		\node (d) at (0,-2) {$S^{\ab}$};
		\draw[->] (a) to node {$\varphi$} (c) ;
		\draw[->,swap] (a) to node {$q$} (d);
		\draw[->,swap] (d) to node {$\widetilde{\varphi}$} (c);
		\end{tikzpicture}
		,
	\end{center}
	where $q\colon S\to S^{\ab}$ denotes the quotient map.
\end{cor}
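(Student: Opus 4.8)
The plan is to reduce the statement to Theorem \ref{nu_ab is the least commutative congruence} by realizing $\nu_{\ab}$ as a subcongruence of the congruence on $S$ determined by $\varphi$; the target here is of course $S^{\ab}=S/\nu_{\ab}$ (as the diagram indicates).

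First I would record two standard facts about a semigroup homomorphism $\varphi\colon S\to T$ of inverse semigroups: it preserves inverses, i.e.\ $\varphi(s^*)=\varphi(s)^*$ for every $s\in S$, so the image $\varphi(S)$ is an inverse subsemigroup of $T$; and since $T$ is commutative, so is $\varphi(S)$. Next, introduce the relation $\nu\defeq\{(s,t)\in S\times S\mid \varphi(s)=\varphi(t)\}$. A routine verification shows that $\nu$ is a congruence on $S$ and that $q(s)\mapsto\varphi(s)$ gives a semigroup isomorphism $S/\nu\cong\varphi(S)$; in particular $S/\nu$ is a commutative inverse semigroup, so $\nu$ is a commutative congruence in the sense of this subsection.

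Now apply Theorem \ref{nu_ab is the least commutative congruence}: since $\nu_{\ab}$ is the least commutative congruence on $S$, we get $\nu_{\ab}\subset\nu$. Hence $(s,t)\in\nu_{\ab}$ forces $\varphi(s)=\varphi(t)$, so $\varphi$ is constant on each $\nu_{\ab}$-class, and the rule $\widetilde{\varphi}(q(s))\defeq\varphi(s)$ defines a map $\widetilde{\varphi}\colon S^{\ab}\to T$ with $\widetilde{\varphi}\circ q=\varphi$. Because $q\colon S\to S^{\ab}$ is a surjective semigroup homomorphism and $\widetilde{\varphi}\circ q=\varphi$ is a homomorphism, $\widetilde{\varphi}$ is a semigroup homomorphism; and any $\psi\colon S^{\ab}\to T$ with $\psi\circ q=\varphi$ must agree with $\widetilde{\varphi}$ on the image of $q$, which is all of $S^{\ab}$, so $\widetilde{\varphi}$ is unique.

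I do not expect a genuine obstacle: the corollary is a formal consequence of the universality packaged in Theorem \ref{nu_ab is the least commutative congruence}. The only points that deserve an explicit sentence in the write-up are that a homomorphism of inverse semigroups sends inverses to inverses (so that $\varphi(S)$ is honestly an inverse subsemigroup, not just a subsemigroup) and that the kernel relation $\nu$ is a congruence with $S/\nu\cong\varphi(S)$; both are entirely standard. (Alternatively, one could bypass the theorem and argue directly: homomorphisms $T\to\T\cup\{0\}$ separate points of the commutative — hence Clifford — semigroup $T$, by combining Lemma \ref{ext of character}, Proposition \ref{extension of fixed character} and Lemma \ref{pull back of the spec of clif is fixed} with Pontryagin duality for the abelian groups $H_e$ and separation on $E(T)$; then $(s,t)\in\nu_{\ab}$ gives $\psi(\varphi(s))=\psi(\varphi(t))$ for all such $\psi$, whence $\varphi(s)=\varphi(t)$. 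But the reduction above is shorter.)
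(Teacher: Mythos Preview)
Your argument is correct and is exactly the intended one: the paper offers no separate proof for this corollary because it is the standard factorization-through-a-quotient consequence of Theorem \ref{nu_ab is the least commutative congruence}, and you have spelled that out precisely (including the observation that the codomain should read $S^{\ab}$, as the diagram confirms).
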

\section{Universal \'etale groupoids associated to quotient inverse semigroups}

\subsection{General case}

Let $S$ be an inverse semigroup and $\nu$ be a congruence on $S$.
Let $q\colon S\to S/\nu$ denote the quotient map. 
Note that
\[F_{\nu}=\{\xi\circ q\in E(S)\mid \xi\in \widehat{E(S)}\}\]
is a closed invariant subset of $G_u(S)$ as shown in Proposition \ref{F_rho is a multiplicative closed}.

We omit the proof of the next proposition.
\begin{prop}
	Let $S$ be an inverse semigroup and $H\subset S$ be a subsemigroup such that $E(S)\subset H$.
	Then the map
	\[
	G_u(H)\ni [s,\xi]\mapsto [s,\xi]\in G_u(S)
	\]
	is a groupoid homomorphism which is a homeomorphism onto its image.
	Moreover the image is an open subgroupoid of $G_u(S)$.
\end{prop}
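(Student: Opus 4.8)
The plan is to reduce everything to the elementary observation that $E(H)=E(S)$. Since $E(S)\subset H$, every idempotent of $S$ lies in $H$; conversely any $h\in H$ with $h^{2}=h$ is already an idempotent of $S$, so $E(H)=E(S)$. In particular $\widehat{E(H)}=\widehat{E(S)}$, and for $h\in H$ the set $D^{\beta}_{h^{*}h}=\{\xi\in\widehat{E(S)}\mid\xi(h^{*}h)=1\}$ is the same whether the spectral action is regarded as an action of $H$ or of $S$. Hence $H*\widehat{E(H)}$ is literally a subset of $S*\widehat{E(S)}$, and the assignment $[s,\xi]\mapsto[s,\xi]$ is at least well posed at the level of sets.

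Next I would verify that the equivalence relation defining $G_{u}(H)$ is exactly the restriction of the one defining $G_{u}(S)$: by definition $(s,\xi)\sim(t,\xi)$ in $H*\widehat{E(H)}$ iff there is $e\in E(H)$ with $\xi(e)=1$ and $se=te$, and since $E(H)=E(S)$ this is precisely the defining condition for $(s,\xi)\sim(t,\xi)$ in $S*\widehat{E(S)}$. Therefore the map $\Phi\colon G_{u}(H)\to G_{u}(S)$, $[s,\xi]\mapsto[s,\xi]$, is well defined and injective. That $\Phi$ is a groupoid homomorphism is immediate from the structure maps: $d$ and $r$ are given by $\xi\mapsto\xi$ and $\xi\mapsto\beta_{s}(\xi)$ in both groupoids, the product of $[s,\beta_{t}(\xi)]$ and $[t,\xi]$ is $[st,\xi]$ with $st\in H$, and $[s,\xi]^{-1}=[s^{*},\beta_{s}(\xi)]$ with $s^{*}\in H$. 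Because $H$ is closed under product and inverse, the image $\Phi(G_{u}(H))=\bigcup_{s\in H}[s,D^{\beta}_{s^{*}s}]$ is a subgroupoid of $G_{u}(S)$, and as a union of basic open sets it is open.

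For the topological claim I would sidestep any hands-on check on the basic sets $[s,U]$ and instead invoke Proposition \ref{condition that groupoid hom is conti}. The restriction of $\Phi$ to unit spaces is the identity map $\widehat{E(H)}=\widehat{E(S)}\to\widehat{E(S)}$, which is a homeomorphism, hence a local homeomorphism; by Proposition \ref{condition that groupoid hom is conti} the map $\Phi$ is itself a local homeomorphism. A local homeomorphism is continuous and open, so $\Phi$ is an open map, and being injective it is a homeomorphism from $G_{u}(H)$ onto the open set $\Phi(G_{u}(H))$. This gives all three assertions.

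The argument is really just bookkeeping, so I do not expect a genuine obstacle; the single point that deserves attention is the identification $E(H)=E(S)$, because it is this that forces $\widehat{E(H)}$, the domains $D^{\beta}_{h^{*}h}$, and the two equivalence relations to coincide, and hence makes $\Phi$ an honest inclusion of \'etale groupoids rather than merely an abstract homomorphism. A secondary convenience worth noting is that Proposition \ref{condition that groupoid hom is conti} lets us deduce the topological properties of $\Phi$ from the trivial behaviour of its restriction to the unit space, avoiding an explicit analysis of the basic open sets.
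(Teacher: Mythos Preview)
Your argument is correct. The paper actually omits the proof of this proposition (``We omit the proof of the next proposition''), so there is no approach to compare against; your write-up supplies exactly the kind of routine verification the author presumably had in mind, with the identification $E(H)=E(S)$ as the pivot and Proposition~\ref{condition that groupoid hom is conti} handling the topological part cleanly.
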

Via the map in the above proposition,
we identify $G_u(H)$ with an open subgroupoid of $G_u(S)$.

Let $S$ be an inverse semigroup, $\nu$ be a congruence on $S$ and $q\colon S\to S/\nu$ be the quotient map.
Define $\ker\nu\defeq q^{-1}(E(S/\nu))\subset S$.
Then $\ker\nu$ is a normal subsemigroup of $S$.
Although $G_u(\ker\nu)$ is not necessarily a normal subgroupoid of $G_u(S)$,
the following holds.

\begin{prop}
	Let $S$ be an inverse semigroup and $\nu$ be a congruence on $S$.
	Then $G_u(\ker\nu)_{F_{\nu}}$ is an open normal subgroupoid of $G_u(S)_{F_{\nu}}$.
\end{prop}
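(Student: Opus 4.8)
The plan is to verify the three defining conditions of a normal subgroupoid for $G_u(\ker\nu)_{F_\nu}$ inside $G_u(S)_{F_\nu}$, namely that it is a subgroupoid, that it lies between the unit space and the isotropy bundle, and that it is invariant under conjugation, and then to check openness separately. First I would set up notation: write $q\colon S\to S/\nu$ for the quotient map, recall that $\ker\nu=q^{-1}(E(S/\nu))$ is a normal subsemigroup of $S$ (so in particular $E(S)\subset\ker\nu$), and recall from the preceding proposition that $G_u(\ker\nu)$ is identified with an open subgroupoid of $G_u(S)$; restricting to the closed invariant set $F_\nu\subset\widehat{E(S)}$, the set $G_u(\ker\nu)_{F_\nu}=d^{-1}(F_\nu)\cap G_u(\ker\nu)$ is automatically a subgroupoid, and it is open in $G_u(S)_{F_\nu}$ because $G_u(\ker\nu)$ is open in $G_u(S)$. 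The containment $G_u(S)^{(0)}_{F_\nu}=F_\nu\subset G_u(\ker\nu)_{F_\nu}$ is immediate since $E(S)\subset\ker\nu$.

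The substantive points are the isotropy condition and conjugation-invariance. For the isotropy condition I would take $[s,\xi]\in G_u(\ker\nu)_{F_\nu}$, so $s\in\ker\nu$ and $\xi\in F_\nu$, and show $d([s,\xi])=r([s,\xi])$, i.e.\ $\xi=\beta_s(\xi)$. Since $\xi\in F_\nu$, write $\xi=\eta\circ q$ for some $\eta\in\widehat{E(S/\nu)}$; then for $e\in E(S)$ we have $\beta_s(\xi)(e)=\xi(s^*es)=\eta(q(s^*es))=\eta(q(s)^*q(e)q(s))$. Because $s\in\ker\nu$, the element $q(s)$ is an idempotent in $S/\nu$, hence central, so $q(s)^*q(e)q(s)=q(s)^*q(s)q(e)=q(s^*s)q(e)$ whenever this is nonzero; this gives $\beta_s(\xi)(e)=\eta(q(s^*s))\eta(q(e))=\xi(s^*s)\xi(e)$, which equals $\xi(e)$ exactly when $\xi(s^*s)=1$, and that holds because $[s,\xi]$ is defined only for $\xi\in D^\beta_{s^*s}$. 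Thus $\beta_s(\xi)=\xi$ and $[s,\xi]\in\Iso(G_u(S)_{F_\nu})$. For conjugation-invariance I would take an arbitrary $[t,\zeta]\in G_u(S)_{F_\nu}$ with $d([t,\zeta])=r([s,\xi])=\xi$, so $\zeta=\beta_t^{-1}(\xi)=\beta_{t^*}(\xi)$ lies in $F_\nu$ by invariance of $F_\nu$, and compute $[t,\zeta]^{-1}[s,\xi][t,\zeta]$; composability forces this product to have the form $[t^*st,\zeta]$ up to the equivalence relation, and the point is that $t^*st\in\ker\nu$: indeed $q(t^*st)=q(t)^*q(s)q(t)$ with $q(s)\in E(S/\nu)$, and conjugates of idempotents by single elements are idempotent in any inverse semigroup (as $q(t)^*q(s)q(t)$ is always idempotent). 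Hence the conjugate again lies in $G_u(\ker\nu)_{F_\nu}$.

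The main obstacle I anticipate is bookkeeping with the equivalence relation defining $G_u(S)=S*\widehat{E(S)}/{\sim}$: one must be careful that representatives $(s,\xi)$ of a class in $G_u(\ker\nu)$ can always be chosen with $s\in\ker\nu$ (this is where normality of $\ker\nu$ as a subsemigroup, together with $E(S)\subset\ker\nu$, is used, essentially the content of the preceding identification proposition), and that the product $[s,\xi][t,\zeta]$ computed via the groupoid operations genuinely lands on a representative whose semigroup coordinate is visibly in $\ker\nu$. Once the representatives are pinned down, the algebraic verifications reduce to the observations above about idempotents being central in the Clifford-like quotient $S/\nu$ restricted to its idempotent set, together with the already-established invariance of $F_\nu$. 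Openness needs no extra work beyond noting that $G_u(\ker\nu)$ is open in $G_u(S)$, so $G_u(\ker\nu)_{F_\nu}$ is open in $G_u(S)_{F_\nu}$, and this completes all the requirements in the definition of a normal subgroupoid recalled in Subsection~1.2.1.
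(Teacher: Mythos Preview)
Your proposal is correct and follows essentially the same route as the paper: write $\xi=\eta\circ q$, use that $q(s)\in E(S/\nu)$ to reduce $\beta_s(\xi)(e)$ to $\xi(s^*s)\xi(e)=\xi(e)$, and then handle conjugation via $sns^*\in\ker\nu$ (the paper conjugates on the other side, but this is immaterial). One small phrasing issue: you say $q(s)$ is ``hence central,'' but idempotents in an inverse semigroup only commute with one another, not with arbitrary elements; since the other factor $q(e)$ is also idempotent, your computation $q(s)^*q(e)q(s)=q(s)^*q(s)q(e)$ is valid, so the slip is harmless here.
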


\begin{proof}
	Now we know that $G_u(\ker\nu)_{F_{\nu}}$ is an open normal subgroupoid of $G_u(S)_{F_{\nu}}$. 
	We show that $G_u(\ker\nu)_{F_{\nu}}$ is normal in $G_u(S)_{F_{\nu}}$.
	Let $q\colon S\to S/\nu$ denote the quotient map.
	
	First we show $G_u(\ker\nu)_{F_{\nu}}\subset \Iso(G_u(S)_{F_{\nu}})$.
	Take $[n,\xi]\in G_u(\ker\nu)_{F_{\nu}}$, where $n\in\ker\nu$.
	Since $\xi\in F_{\nu}$ holds,
	there exists $\eta\in \widehat{E(S/\nu)}$ such that $\xi=\eta\circ q$.
	Since $q(n)\in E(S/\nu)$ holds,
	we have $q(n^*)\in E(S/\nu)$ and
	\begin{align*}
	\beta_n(\xi)(e)&=\xi(n^*en)=\eta(q(n^*)q(e)q(n)) \\
	&=\eta(q(n^*))\eta(q(e))\eta(q(n))=\eta(q(n^*n))\eta(q(e))=\xi(e)
	\end{align*}
	for all $e\in E(S)$.
	Therefore $\beta_n(\xi)=\xi$ holds and it follows that $[n,\xi]\in \Iso(G_u(\ker\nu)_{F_{\nu}})$.
	
	Next we show that $[s,\eta][n,\xi][s,\eta]^{-1}\in G_u(\ker\nu)_{F_{\nu}}$ holds for all $[n,\xi]\in G_u(\ker\nu)_{F_{\nu}}$ and $[s,\eta]\in G_u(S)_{F_{\nu}}$ such that $\eta=\beta_n(\xi)$.
	One can see that
	\[[s,\eta][n,\xi][s,\eta]^{-1}=[sns^*,\beta_s(\eta)].\]
	Now it follows that $[s,\eta][n,\xi][s,\eta]^{-1}\in G_u(\ker\nu)_{F_{\nu}}$ from $sns^*\in\ker\nu$.
	\qed  
\end{proof}

\begin{thm}\label{Main theorem}
	Let $S$ be an inverse semigroup and $\nu$ be a congruence on $S$.
	Then $G_u(S/\nu)$ is isomorphic to $G_u(S)_{F_{\nu}}/G_u(\ker{\nu})_{F_{\nu}}$.
\end{thm}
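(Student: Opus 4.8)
The plan is to construct an explicit continuous groupoid homomorphism $\Phi\colon G_u(S)_{F_{\nu}}\to G_u(S/\nu)$ which is a local homeomorphism, injective on the unit space, surjective, and has kernel exactly $G_u(\ker\nu)_{F_{\nu}}$; the theorem then follows at once from Proposition \ref{homomorphism theorem}. First observe that, since $q$ is a surjective homomorphism of inverse semigroups, $q(E(S))=E(S/\nu)$ (if $f\in E(S/\nu)$ then $f=q(s)$ for some $s$, so $f=f^{*}f=q(s^{*}s)$). Consequently the proper continuous map $\widehat{q}\colon\widehat{E(S/\nu)}\to\widehat{E(S)}$ is injective, hence a homeomorphism onto the closed set $F_{\nu}$. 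Define
\[
\Phi([s,\xi])\defeq[q(s),\widehat{q}^{-1}(\xi)]
\]
for $[s,\xi]\in G_u(S)_{F_{\nu}}$; here $\xi\in F_{\nu}$ and $\xi(s^{*}s)=1$, so $\widehat{q}^{-1}(\xi)(q(s)^{*}q(s))=1$ and the right-hand side makes sense.

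To see that $\Phi$ is well defined and a groupoid homomorphism I would argue as follows. If $[s,\xi]=[t,\xi]$ in $G_u(S)$, there is $e\in E(S)$ with $\xi(e)=1$ and $se=te$; applying $q$ gives $q(s)q(e)=q(t)q(e)$ with $q(e)\in E(S/\nu)$ and $\widehat{q}^{-1}(\xi)(q(e))=1$, so $[q(s),\widehat{q}^{-1}(\xi)]=[q(t),\widehat{q}^{-1}(\xi)]$. Multiplicativity reduces to the identity $\widehat{q}^{-1}(\beta_{t}(\xi))=\beta_{q(t)}(\widehat{q}^{-1}(\xi))$, which is exactly the computation already carried out in the proof of Proposition \ref{F_rho is a multiplicative closed}. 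On the unit space $F_{\nu}$ the map $\Phi$ coincides with $\widehat{q}^{-1}$, a homeomorphism onto $\widehat{E(S/\nu)}$; hence $\Phi$ is injective on units and, by Proposition \ref{condition that groupoid hom is conti}, a continuous local homeomorphism. Surjectivity is immediate: given $[u,\eta]\in G_u(S/\nu)$, choose $s\in S$ with $q(s)=u$ and put $\xi\defeq\widehat{q}(\eta)\in F_{\nu}$; then $\xi(s^{*}s)=\eta(u^{*}u)=1$ and $\Phi([s,\xi])=[u,\eta]$.

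It remains to identify $\ker\Phi=\Phi^{-1}(\widehat{E(S/\nu)})$ with $G_u(\ker\nu)_{F_{\nu}}$, which by the preceding proposition is already known to be an open normal subgroupoid of $G_u(S)_{F_{\nu}}$. The inclusion $G_u(\ker\nu)_{F_{\nu}}\subset\ker\Phi$ is easy: if $n\in\ker\nu$ then $q(n)\in E(S/\nu)$, so $\Phi([n,\xi])=[q(n),\widehat{q}^{-1}(\xi)]$ lies in the unit space. For the reverse inclusion — the one delicate point — suppose $[s,\xi]\in\ker\Phi$ and set $\eta\defeq\widehat{q}^{-1}(\xi)$. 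Then $[q(s),\eta]$ is a unit of $G_u(S/\nu)$, so there is $g\in E(S/\nu)$ with $\eta(g)=1$ and $q(s)g\in E(S/\nu)$ (a product of idempotents in the commutative $E(S/\nu)$). Lifting $g=q(h)$ with $h\in E(S)$ gives $\xi(h)=\eta(g)=1$ and $q(sh)=q(s)g\in E(S/\nu)$, so $sh\in\ker\nu$; moreover $[s,\xi]=[sh,\xi]$ since $s\cdot h=(sh)\cdot h$ with $\xi(h)=1$, and $\xi\big((sh)^{*}(sh)\big)=\xi(hs^{*}s)=1$. Hence $[s,\xi]=[sh,\xi]\in G_u(\ker\nu)_{F_{\nu}}$. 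Applying Proposition \ref{homomorphism theorem} to $\Phi$ now produces an isomorphism $G_u(S)_{F_{\nu}}/G_u(\ker\nu)_{F_{\nu}}\xrightarrow{\ \sim\ }\Phi\big(G_u(S)_{F_{\nu}}\big)=G_u(S/\nu)$. I expect the main obstacle to be precisely this reverse kernel inclusion, where one must produce the idempotent $h$ exhibiting $[s,\xi]$ as an element already represented in $G_u(\ker\nu)$, all while keeping careful track of the two equivalence relations defining the groupoids involved.
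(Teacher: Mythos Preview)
Your proof is correct and follows essentially the same route as the paper: construct the surjective local homeomorphism $\Phi\colon G_u(S)_{F_\nu}\to G_u(S/\nu)$, $[s,\widehat{q}(\xi)]\mapsto[q(s),\xi]$, verify it is injective on units, identify its kernel as $G_u(\ker\nu)_{F_\nu}$, and invoke Proposition~\ref{homomorphism theorem}. The only cosmetic difference in the kernel computation is that the paper first lifts the idempotent $p$ with $[q(s),\eta]=[p,\eta]$ to some $e\in E(S)$ and then lifts the witnessing idempotent to $f\in E(S)$, whereas you compress this by producing $g\in E(S/\nu)$ with $q(s)g\in E(S/\nu)$ directly and lifting only $g$; both arguments are the same in substance.
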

\begin{proof}
	Let $q\colon S\to S/\nu$ denote the quotient map.
	Note that a map 
	\[\widehat{q}\colon \widehat{E(S/\nu)}\ni \xi \mapsto \xi\circ q\in F_{\nu}\]
	is a homeomorphism.
	Define a map 
	\[\Phi\colon G_u(S)_{F_{\nu}}\ni [s,\widehat{q}(\xi)]\mapsto [q(s),\xi]\in G_u(S/\nu).\]
	Using Proposition \ref{condition that groupoid hom is conti}, one can see that $\Phi$ is a groupoid homomorphism which is a local homeomorphism and injective on $G_u(S)_{F_{\nu}}^{(0)}$.
	Observe that $\Phi$ is surjective.
	
	
	We show that $\ker\Phi=G_u(\ker\nu)_{F_{\nu}}$ holds.
	The inclusion $\ker\Phi\supset G_u(\ker\nu)_{F_{\nu}}$ is obvious.
	In order to show $\ker\Phi\subset G_u(\ker\nu)_{F_{\nu}}$, take $[s,\widehat{q}(\xi)]\in \ker\Phi$.
	Since we have $[q(s),\xi]\in G_u(S/\nu)^{(0)}$ and $q(E(S))=E(S/\nu)$,
	there exists $e\in E(S)$ such that $[q(s),\xi]=[q(e),\xi]$.
	There exists $f\in E(S)$ such that $\xi(q(f))=1$ and $q(s)q(f)=q(e)q(f)$.
	Now we have $sf\in\ker\nu$,
	so it follows that \[ [s,\widehat{q}(\xi)]=[sf,\widehat{q}(\xi)] \in G_u(\ker\nu)_{F_{\nu}}. \]
	This shows that $\ker\Phi=G_u(\ker\nu)_{F_{\nu}}$.

	By Proposition \ref{homomorphism theorem},
	$\Phi$ induces an isomorphism $\widetilde{\Phi}$ which makes the following diagram commutative
	\begin{center}
		\begin{tikzpicture}[auto]
		\node (a) at (0,0) {$G_u(S)_{F_{\nu}}$};
		\node (b) at (3,0) {$G_u(S/\nu)$};
		\node (c) at (0,-2) {$G_u(S)_{F_{\nu}}/G_u(\ker{\nu})_{F_{\nu}}$};
		\draw[->] (a) to node {$\Phi$}  (b);
		\draw[->,swap] (a) to node {$Q$} (c);
		\draw[->,swap] (c) to node {$\widetilde{\Phi}$} (b);
		\end{tikzpicture}
		,
	\end{center}
	where $Q$ denotes the quotient map.
	\qed
\end{proof}

\subsection{Universal groupoids associated to special quotient inverse semigroups}

\subsubsection{Minimum congruences associated to normal congruences on semilattices of idempotents}

Let $S$ be an inverse semigroup.
Recall that a congruence $\rho$ on $E(S)$ is normal if $(e,f)\in \rho$ implies $(ses^*, sfs^*)\in \rho$ for all $s\in S$ and $e,f\in E(S)$.
Note that one can construct the least congruence $\nu_{\rho,\min}$ whose restriction to $E(S)$ coincides with $\rho$. 
Recall that we can associate the closed invariant subset $F_{\rho}$ of $G_u(S)$ as shown in Proposition \ref{F_rho is a multiplicative closed}.

\begin{prop}\label{restriction by min congruence}
	Let $S$ be an inverse semigroup and $\rho$ be a normal congruence on $E(S)$.
	Then $G_u(S/\nu_{\rho,\min})$ is isomorphic to $G_u(S)_{F_{\rho}}$. 
\end{prop}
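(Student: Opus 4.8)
The plan is to obtain the statement from the Main theorem (Theorem~\ref{Main theorem}) applied to the congruence $\nu_{\rho,\min}$, once the closed invariant set and the normal subgroupoid occurring there are identified concretely. First I would observe that, along any quotient $q\colon S\to S/\nu$ of inverse semigroups, idempotents lift: if $q(s)\in E(S/\nu)$ then $q(s)=q(s)^{*}q(s)=q(s^{*}s)$ with $s^{*}s\in E(S)$, so $E(S/\nu)=q(E(S))$. Consequently the closed invariant set $F_{\nu}$ appearing in Theorem~\ref{Main theorem} depends only on the restriction $\nu|_{E(S)}$, and it coincides with the set $F_{\nu|_{E(S)}}$ of Proposition~\ref{F_rho is a multiplicative closed}. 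Since $\nu_{\rho,\min}|_{E(S)}=\rho$, this gives $F_{\nu_{\rho,\min}}=F_{\rho}$, and Theorem~\ref{Main theorem} yields
\[
G_{u}(S/\nu_{\rho,\min})\cong G_{u}(S)_{F_{\rho}}/G_{u}(\ker\nu_{\rho,\min})_{F_{\rho}}.
\]

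The main point is then to show that the normal subgroupoid $G_{u}(\ker\nu_{\rho,\min})_{F_{\rho}}$ is trivial, i.e.\ that it equals the unit space $F_{\rho}$ of $G_{u}(S)_{F_{\rho}}$. To this end I would take an arbitrary element $[n,\xi]$ of it, with $n\in\ker\nu_{\rho,\min}$ and $\xi\in F_{\rho}$, $\xi(n^{*}n)=1$, and prove it is a unit. Since $q(n)$ is idempotent in $S/\nu_{\rho,\min}$ we have $q(n)=q(n^{*}n)$, that is $(n,n^{*}n)\in\nu_{\rho,\min}$; unwinding the definition of $\nu_{\rho,\min}$ produces an $e\in E(S)$ with $(e,n^{*}n)\in\rho$ and $ne=n^{*}ne$. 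Because $\xi\in F_{\rho}$ factors through $E(S)/\rho$ and $\xi(n^{*}n)=1$, also $\xi(e)=1$, so $[n,\xi]=[ne,\xi]=[n^{*}ne,\xi]$ with $n^{*}ne\in E(S)$ and $\xi(n^{*}ne)=1$; hence $[n,\xi]$ is a unit. Thus $G_{u}(\ker\nu_{\rho,\min})_{F_{\rho}}$ is the trivial (unit-space) normal subgroupoid of $G_{u}(S)_{F_{\rho}}$.

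Finally, quotienting an \'etale groupoid $G$ by its unit space returns $G$: in the defining equivalence relation, $\alpha\sim\beta$ means $d(\alpha)=d(\beta)$ and $\alpha\beta^{-1}\in G^{(0)}$, which forces $\alpha=\beta$, and the quotient topology coincides with the original one. Applying this to the displayed isomorphism gives $G_{u}(S/\nu_{\rho,\min})\cong G_{u}(S)_{F_{\rho}}$, as claimed. I expect the only substantive step to be the middle paragraph, where the explicit description of $\nu_{\rho,\min}$ has to be used; the identification $F_{\nu_{\rho,\min}}=F_{\rho}$ and the collapse of the quotient by the unit space are routine bookkeeping.
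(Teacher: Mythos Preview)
Your proof is correct and follows essentially the same route as the paper: invoke Theorem~\ref{Main theorem}, then show $G_u(\ker\nu_{\rho,\min})_{F_\rho}$ coincides with the unit space by unwinding the definition of $\nu_{\rho,\min}$ at a given $[n,\xi]$. Your choice of the specific idempotent $n^*n$ (so that $(n,n^*n)\in\nu_{\rho,\min}$) in place of an arbitrary $e\in E(S)$ with $q(n)=q(e)$ is a harmless simplification, and you spell out the identifications $F_{\nu_{\rho,\min}}=F_\rho$ and $G/G^{(0)}\cong G$ that the paper leaves implicit.
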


\begin{proof}
	By Theorem \ref{Main theorem}, it suffices to show that $G_u(\ker\nu_{\rho,\min})_{F_{\rho}}=G_u(S)_{F_{\rho}}^{(0)}$ holds.
	Let $q\colon S\to S/\nu_{\rho,\min}$ denote the quotient map.
	Take $[n,\widehat{q}(\xi)]\in G_u(\ker\nu_{\rho,\min})_{F_{\rho}}$, where $n\in \ker\nu_{\rho,\min}$ and $\xi\in \widehat{E(S/\rho)}$.
	Since $n\in \ker\nu_{\rho,\min}$, there exists $e\in E(S)$ such that $q(n)=q(e)$.
	By the definition of $\nu_{\rho,\min}$,
	there exists $f\in E(S)$ such that $nf=ef$ and $(n^*n,f)\in \rho$ hold.
	Observe that $\widehat{q}(\xi)(n^*n)=\xi(q(n^*n))=\xi(q(f))=\xi(q(e))=1$.
	We have
	\[
	[n,\widehat{q}(\xi)]=[nf,\widehat{q}(\xi)]=[ef,\widehat{q}(\xi)]\in G_u(S)_{F_{\rho}}^{(0)}.
	\] 
	Now we have shown that $G_u(\ker\nu_{\rho,\min})_{F_{\rho}}=G_u(\ker\nu_{\rho,\min})_{F_{\rho}}^{(0)}$.\qed
\end{proof}

\begin{thm}
	Let $S$ be an inverse semigroup.
	Then $G_u(S^{\Clif})$ is isomorphic to $G_u(S)_{\fix}$.
\end{thm}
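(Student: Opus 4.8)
The plan is to reduce the statement to Proposition \ref{restriction by min congruence} and then to identify the relevant closed invariant set. By definition $S^{\Clif} = S/\nu_{\Clif} = S/\nu_{\rho_{\Clif},\min}$, where $\rho_{\Clif} = \rho_{\widehat{E(S)}_{\fix}}$ is a normal congruence on $E(S)$. Applying Proposition \ref{restriction by min congruence} to $\rho_{\Clif}$ gives $G_u(S^{\Clif}) \cong G_u(S)_{F_{\rho_{\Clif}}}$ at once, so the entire task becomes showing that $F_{\rho_{\Clif}}$ is the set of fixed points of the groupoid $G_u(S)$, i.e.\ that $G_u(S)_{F_{\rho_{\Clif}}} = G_u(S)_{\fix}$.

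First I would check that $\widehat{E(S)}_{\fix}$ is a unital multiplicative invariant closed subset of $\widehat{E(S)}$: closedness and multiplicativity were noted right after the definition of fixed characters; unitality holds because the constant function $1$ satisfies $1(s^{*}es) = 1 = 1(e)$; and invariance is immediate, since for a fixed character $\xi$ with $\xi(s^{*}s) = 1$ we have $\beta_s(\xi)(e) = \xi(s^{*}es) = \xi(e)$, hence $\beta_s(\xi) = \xi \in \widehat{E(S)}_{\fix}$. Proposition \ref{F=F_nu_F} (or, equivalently, the correspondence of Corollary \ref{Correspondens between normal cong and invariant subset}) then yields $F_{\rho_{\Clif}} = F_{\rho_{\widehat{E(S)}_{\fix}}} = \widehat{E(S)}_{\fix}$.

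Second I would identify $\widehat{E(S)}_{\fix}$ with the set of fixed points of $G_u(S)$. A character $\xi \in \widehat{E(S)} = G_u(S)^{(0)}$ is a fixed point precisely when every arrow with source $\xi$, namely the elements $[s,\xi]$ with $\xi(s^{*}s) = 1$, has range $\xi$; since $r([s,\xi]) = \beta_s(\xi)$, this means $\xi(s^{*}es) = \beta_s(\xi)(e) = \xi(e)$ for all $e \in E(S)$, which is exactly the defining condition of a fixed character. Hence $G_u(S)_{\fix} = G_u(S)_{\widehat{E(S)}_{\fix}} = G_u(S)_{F_{\rho_{\Clif}}} \cong G_u(S^{\Clif})$, which is the assertion. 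The whole argument is bookkeeping on top of results already in hand; the only point I would spell out with care is this final identification of the fixed-point set of $G_u(S)$ with $\widehat{E(S)}_{\fix}$ together with the verification that $\widehat{E(S)}_{\fix}$ meets the hypotheses of Proposition \ref{F=F_nu_F}.
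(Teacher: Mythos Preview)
Your argument is correct and follows the same overall plan as the paper: reduce to Proposition~\ref{restriction by min congruence} and then prove $F_{\rho_{\Clif}} = \widehat{E(S)}_{\fix}$. The only difference is in how this last equality is obtained. The paper argues the two inclusions directly: $F_{\rho_{\Clif}} \subset \widehat{E(S)}_{\fix}$ comes from Lemma~\ref{pull back of the spec of clif is fixed}, and for $\widehat{E(S)}_{\fix} \subset F_{\rho_{\Clif}}$ one extends a fixed character to a semigroup homomorphism $S \to \{0,1\}$ via Proposition~\ref{extension of fixed character} and then factors it through $S^{\Clif}$ using that $\{0,1\}$ is Clifford. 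You instead verify that $\widehat{E(S)}_{\fix}$ is unital, multiplicative, invariant and closed, and then invoke Proposition~\ref{F=F_nu_F} to get $F_{\rho_{\Clif}} = F_{\rho_{\widehat{E(S)}_{\fix}}} = \widehat{E(S)}_{\fix}$ in one stroke. Your route is a clean application of the general correspondence set up in Section~2.1 and avoids appealing to the universal property of $S^{\Clif}$; the paper's route is slightly more hands-on but does not require checking the four hypotheses of Proposition~\ref{F=F_nu_F}. Your explicit identification of $\widehat{E(S)}_{\fix}$ with the fixed-point set of $G_u(S)$ is a welcome clarification that the paper leaves implicit.
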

\begin{proof}
	Recall the definition of $\nu_{\Clif}=\nu_{\rho_{\Clif},\min}$ (see Definition \ref{Cliffordization of inverse semigroup}).
	Since we have Proposition \ref{restriction by min congruence},
	it suffices to show $F_{\rho_{\Clif}}=\widehat{E(S)}_{\fix}$.
	By Lemma \ref{pull back of the spec of clif is fixed},
	we have $F_{\rho_{\Clif}}\subset\widehat{E(S)}_{\fix}$.
	To show the reverse inclusion,
	take $\xi\in \widehat{E(S)}_{\fix}$.
	By Proposition \ref{extension of fixed character},
	there exists a semigroup homomorphism extension $\widetilde{\xi}\colon S\to \{0,1\}$.
	Since $\{0,1\}$ is Clifford,
	there exists a semigroup homomorphism $\eta\colon S^{\Clif}\to \{0,1\}$ such that $\eta\circ q=\widetilde{\xi}$, where $q\colon S\to S^{\Clif}$ denotes the quotient map.
	Therefore we have $\xi=\eta\circ q|_{E(S)}\in F_{\rho_{\Clif}}$.
	Now we have shown $F_{\rho_{\Clif}}=\widehat{E(S)}_{\fix}$.
	\qed
\end{proof}

\subsubsection{The least commutative congruences}

Let $S$ be an inverse semigroup and $\nu_{\ab}$ be the least commutative congruence (see Proposition \ref{def of nu_ab} and Theorem \ref{nu_ab is the least commutative congruence}).
Recall that the abelianization of $S$ is defined to be $S^{\ab}\defeq S/\nu_{\ab}$.

\begin{thm}
	Let $S$ be an inverse semigroup.
	Then $G_u(S^{\ab})$ is isomorphic to $G_u(S)^{\ab}$.
\end{thm}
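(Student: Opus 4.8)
The plan is to exhibit $G_u(S^{\ab})$ as $G_u(S)^{\ab} = (G_u(S))_{\fix}/[(G_u(S))_{\fix},(G_u(S))_{\fix}]$ by combining the main theorem with the already-established identification of the Clifford quotient. Recall that $\nu_{\ab}$ is the least commutative congruence, and a commutative congruence is in particular a Clifford congruence, so $\nu_{\Clif}\subset \nu_{\ab}$. Thus $S^{\ab}$ is a quotient of $S^{\Clif}$, and one checks that the induced congruence on $S^{\Clif}$ is exactly the least commutative congruence of the Clifford semigroup $S^{\Clif}$. Since the previous theorem gives $G_u(S^{\Clif})\cong G_u(S)_{\fix}$, it suffices to prove the statement for Clifford inverse semigroups, i.e.\ to show that for Clifford $S$ one has $G_u(S^{\ab})\cong G_u(S)^{\ab}$, where now $G_u(S) = G_u(S)_{\fix}$ because $\widehat{E(S)}_{\fix}=\widehat{E(S)}$ by Lemma \ref{pull back of the spec of clif is fixed}.

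So assume $S$ is Clifford. First I would identify $F_{\nu_{\ab}}$. An element of $\widehat{E(S)}$ lies in $F_{\nu_{\ab}}$ iff it factors through $q\colon E(S)\to E(S/\nu_{\ab})$; but for a Clifford semigroup every character of $E(S)$ extends to a semigroup homomorphism $S\to\{0,1\}\subset \T\cup\{0\}$ (Proposition \ref{extension of fixed character}), and such a homomorphism lies in $\widehat S$ and hence factors through $S^{\ab}$. This should give $F_{\nu_{\ab}}=\widehat{E(S)}$, so that $G_u(S)_{F_{\nu_{\ab}}}=G_u(S)$. By Theorem \ref{Main theorem} we then get $G_u(S^{\ab})\cong G_u(S)/G_u(\ker\nu_{\ab})$. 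It remains to identify $G_u(\ker\nu_{\ab})$ with $[G_u(S),G_u(S)]=[G_u(S)_{\fix},G_u(S)_{\fix}]$, the defining normal subgroupoid of $G_u(S)^{\ab}$; since uniqueness of quotients by open normal subgroupoids follows from Proposition \ref{homomorphism theorem}, matching the normal subgroupoids suffices.

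The core computation is therefore: for Clifford $S$ and $\xi\in\widehat{E(S)}$ a fixed point of $G_u(S)$, show that the fiber $G_u(\ker\nu_{\ab})_\xi$ equals the commutator subgroup of the isotropy group $G_u(S)_\xi$. The isotropy group $G_u(S)_\xi$ is $\{[s,\xi]\mid \xi(s^*s)=1,\ \beta_s(\xi)=\xi\}$, which (using that $S$ is Clifford, so $s^*s=ss^*$ forces $\beta_s(\xi)=\xi$ whenever $\xi(s^*s)=1$) is naturally a quotient of the group $H_{e}$ for an appropriate $e$, or more precisely the quotient of $\{s : \xi(s^*s)=1\}$ by the equivalence $s\sim t$ iff $se=te$ for some $e$ with $\xi(e)=1$. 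On the other hand $[s,\xi]\in G_u(\ker\nu_{\ab})$ iff $q(s)\in E(S/\nu_{\ab})$, i.e.\ $\psi(s)=\psi(s^*s)$ for all $\psi\in\widehat S$; by Lemma \ref{ext of character} the homomorphisms $\psi$ restrict, on the relevant maximal subgroup $H_e$, to exactly all group homomorphisms $H_e\to\T$, which separate $H_e/[H_e,H_e]$. Chasing this through the equivalence relation defining $G_u(S)$ should show $[s,\xi]\in G_u(\ker\nu_{\ab})$ iff the class of $s$ lies in the commutator subgroup of $G_u(S)_\xi$.

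The main obstacle is this last fiberwise identification: one must carefully relate the three equivalence relations in play — the congruence $\nu_{\ab}$ on $S$, the $\sim$ defining $G_u(S)=S\ltimes_\beta\widehat{E(S)}$, and the passage from maximal subgroups $H_e$ of $S$ to the isotropy groups $G_u(S)_\xi$ — and check that "kills the commutator fiberwise" on the groupoid side corresponds exactly to "becomes idempotent under every $\T$-valued homomorphism" on the semigroup side. The character-theoretic input (Lemma \ref{ext of character} plus the fact that $\T$-valued characters separate a discrete abelian group from its non-identity elements, i.e.\ Pontryagin duality for discrete abelian groups) is what makes both sides land on the commutator subgroup; everything else is bookkeeping with the $[s,\xi]$ notation. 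Once the fibers match and both subgroupoids are open (the groupoid side is open by \cite[Proposition 3.2.1]{KOMURA}, the semigroup side by the general facts on $G_u(H)\subset G_u(S)$), Proposition \ref{homomorphism theorem} delivers the isomorphism.
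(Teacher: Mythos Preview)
Your approach is correct and uses the same key ingredients as the paper --- Theorem~\ref{Main theorem}, Lemma~\ref{ext of character}, and the fact that $\T$-valued characters separate the points of a discrete abelian group --- but organizes them differently. The paper works directly with a general $S$: it shows $F_{\nu_{\ab}}=\widehat{E(S)}_{\fix}$ (from $\nu_{\ab}|_{E(S)}=\nu_{\Clif}|_{E(S)}$) and then proves $G_u(\ker\nu_{\ab})_{\fix}=[G_u(S)_{\fix},G_u(S)_{\fix}]$ by passing through the Clifford quotient $q'\colon S\to S^{\Clif}$ as an auxiliary tool. Concretely, an element $n\in\ker\nu_{\ab}$ is pushed to $q'(n)\in H_{q'(e)}\subset S^{\Clif}$, shown there to lie in the commutator subgroup via Lemma~\ref{ext of character} and duality, and then lifted back to a product of commutators in $S$ using the definition of $\nu_{\Clif}$. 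You instead invoke the already-established isomorphism $G_u(S^{\Clif})\cong G_u(S)_{\fix}$ together with $(S^{\Clif})^{\ab}\cong S^{\ab}$ to reduce to Clifford $S$ from the outset; this lets you run the same commutator argument with a single quotient map and with the actual maximal subgroups $H_e\subset S$ rather than their images in $S^{\Clif}$. Your route trades the paper's juggling of $q$ and $q'$ for two easy preliminary observations ($(G_{\fix})^{\ab}=G^{\ab}$ and $(S^{\Clif})^{\ab}=S^{\ab}$), and is arguably a little cleaner; the underlying mechanism is identical.
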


\begin{proof}
	By Theorem \ref{Main theorem},
	it suffices to show that $F_{\nu_{\ab}}=\widehat{E(S)}_{\fix}$ and $G_u(\ker\nu_{\ab})_{\fix}=[G_u(S)_{\fix}, G_u(S)_{\fix}]$ hold.
	
	Observe that $\nu_{\ab}$ is equal to $\nu_{\Clif}$ on $E(S)$.
	Indeed this follows from the fact that $\varphi|_{E(S)}\in \widehat{E(S)}_{\fix}$ holds for all $\varphi\in \widehat{S}$.
	Therefore we have $F_{\nu_{\ab}}=\widehat{E(S)}_{\fix}$.
	
	Next we show $G_u(\ker\nu_{\ab})_{\fix}=[G_u(S)_{\fix},G_u(S)_{\fix}]$.
	The inclusion \[G_u(\ker\nu_{\ab})_{\fix}\supset[G_u(S)_{\fix},G_u(S)_{\fix}]\] is easy to show.
	
	Let $q\colon S\to S^{\ab}$ and $q'\colon S\to S^{\Clif}$ denote the quotient maps.
	Since a commutative inverse semigroup is Clifford,
	there exists a semigroup homomorphism $\sigma\colon S^{\Clif}\to S^{\ab}$ such that $q=\sigma\circ q'$.
	To show the reverse inclusion
	\[G_u(\ker\nu_{\ab})_{\fix}\subset[G_u(S)_{\fix},G_u(S)_{\fix}],\]
	take $[n,\widehat{q}(\xi)]\in G_u(\ker\nu_{\ab})_{\fix}$, where $n\in\ker\nu_{\ab}$ and $\xi\in \widehat{E(S^{\ab})}$.
	Since $n\in \ker\nu_{\ab}$,
	there exists $e\in E(S)$ such that $q(n)=q(e)$.
	Then we have $q(n^*n)=q(e)$.
	Since $\nu_{\ab}$ coincides with $\nu_{\Clif}$ on $E(S)$,
	it follows that $q'(n^*n)=q'(e)$.
	Define
	\[
	H_{q'(e)}=\{s\in S^{\Clif}\mid s^*s=q'(e)\},
	\]
	then $H_{q'(e)}$ is a group in the operation inherited from $S^{\Clif}$.
	Observe that a unit of $H_{q'(e)}$ is $q'(e)$ and we have $q'(n)\in H_{q'(e)}$.
	Fix a group homomorphism $\chi\colon H_{q'(e)}\to \T$ arbitrarily.
	By Proposition \ref{ext of character},
	$\chi$ is extended to the semigroup homomorphism $\widetilde{\chi}\colon S^{\Clif}\to \T\cup \{0\}$.
	Since $\T\cup \{0\}$ is commutative,
	there exists a semigroup homomorphism $\overline{\chi}\colon S^{\ab}\to\T\cup\{0\}$ which makes the following diagram commutative;
	\begin{center}
		\begin{tikzpicture}[auto]
		\node (a) at (0,0) {$S$};
		\node (b) at (3,0) {$S^{\Clif}$};
		\node (c) at (3,-2) {$\T\cup\{0\}$};
		\node (d) at (0,-2) {$S^{\ab}$};
		\draw[->] (a) to node {$q'$}  (b);
		\draw[->] (b) to node {$\widetilde{\chi}$} (c);
		\draw[->,swap] (a) to node {$q$} (d);
		\draw[->,swap] (d) to node {$\overline{\chi}$} (c);
		\end{tikzpicture}
		.
	\end{center}
Now we have \[\chi(q'(n))=\overline{\chi}(q(n))=\overline{\chi}(q(e))=\chi(q'(e)).\]
Since we take a group homomorphism $\chi\colon H_{q'(e)}\to \T$ arbitrarily,
it follows that $q'(n)\in [H_{q'(e)},H_{q'(e)}]$, where $[H_{q'(e)},H_{q'(e)}]$ denotes the commutator subgroup of $H_{q'(e)}$.
Therefore there exists $s_1,s_2,\dots,s_m,t_1,t_2,\dots,t_m\in S$ such that
\begin{align*}
q'(n)&=q'(s_1)q'(t_1)q'(s_1)^*q'(t_1)^*\cdots q'(s_m)q'(t_m)'q'(s_m)^*q'(t_m)^* \\
&=q'(s_1t_1s_1^*t_1^*\cdots s_mt_ms_m^*t_m^*).
\end{align*}
By the definition of $\nu_{\Clif}$,
there exists $f\in E(S)$ such that \[nf= s_1t_1s_1^*t_1^*\cdots s_mt_ms_m^*t_m^*f\] and $q'(n^*n)=q'(f)$ hold.
Then we have
\begin{align*}
[n,\widehat{q}(\xi)]&=[nf,\widehat{q}(\xi)] \\
&=[s_1t_1s_1^*t_1^*\cdots s_mt_ms_m^*t_m^*f,\widehat{q}(\xi)] \\
&=[s_1t_1s_1^*t_1^*\cdots s_mt_ms_m^*t_m^*,\widehat{q}(\xi)]\in [G_u(S)_{\fix},G_u(S)_{\fix}]
\end{align*}
Thus it is shown that $G_u(\ker\nu_{\ab})_{\fix}=[G_u(S)_{\fix}, G_u(S)_{\fix}]$.
\qed
\end{proof}

\section{Applications and examples}

\subsection{Clifford inverse semigroups from the view point of fixed points}
A 0-group is an inverse semigroup isomorphic to $\Gamma\cup\{0\}$ for some group $\Gamma$.
It is clear that every 0-group is Clifford inverse semigroup. 
Conversely, we see that every Clifford inverse semigroup is embedded into a direct product of 0-groups.
Remark that this fact is already known (see \cite[Theorem 2.6]{petrich1984inverse}).
Using fixed characters, we obtain a new proof. 

Let $S$ be a Clifford inverse semigroup and $\xi\in \widehat{E(S)}$.
Since $\{\xi\}\subset \widehat{E(S)}$ is invariant by Lemma \ref{pull back of the spec of clif is fixed},
we may consider a normal congruence $\rho_{\xi}\defeq\rho_{\{\xi\}}$ on $E(S)$ and a congruence $\nu_{\xi}\defeq\nu_{\rho_{\{\xi\}},\min}$ on $S$.
If $\xi=1$,
$\rho_{\xi}$ coincides with $E(S)\times E(S)$ and $S/\nu_{\xi}$ is the maximal group image of $S$.
If $\xi\not=1$,
$E(S/\nu_{\xi})$ is isomorphic to $\{0,1\}$ and we identify them.
Thus $(S/\nu_{\xi})\setminus \{0\}$ becomes a group.
Now we define a group $S(\xi)$ by
\begin{align*}
S(\xi)\defeq
\begin{cases}
S/\nu_{\xi} & (\xi=1), \\
(S/\nu_{\xi})\setminus\{0\} & (\xi\not=1).
\end{cases}
\end{align*}
For any $\xi\in\widehat{E(S)}$ with $\xi\not=1$,
note that $S/\nu_{\xi}=S(\xi)\cup\{0\}$ holds and $S/\nu_{\xi}$ is a 0-group.

Let $q_{\xi}\colon S\to S/\nu_{\xi}$ denote the quotient map.
\begin{prop}
Let $S$ be a Clifford inverse semigroup.
A semigroup homomorphism
\[
 \Phi\colon S\ni s\mapsto (q_{\xi}(s))_{\xi\in \widehat{E(S)}}\in \prod_{\xi\in\widehat{E(S)}} S/\nu_{\xi} 
\]
is injective.
In particular,
every Clifford inverse semigroup is embedded into a direct product of 0-groups and groups.
\end{prop}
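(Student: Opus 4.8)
\emph{Plan.} The ``in particular'' assertion is immediate from the discussion preceding the proposition: when $\xi=1$ the quotient $S/\nu_\xi$ is the maximal group image of $S$ (a group), and when $\xi\neq 1$ it is the $0$-group $S(\xi)\cup\{0\}$. So the only thing to prove is that $\Phi$ is injective, and I would do this in two steps: first pin down the idempotent $s^*s$ using arbitrary characters, then pin down $s$ itself using one specially chosen character.

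Suppose $\Phi(s)=\Phi(t)$, i.e.\ $(s,t)\in\nu_\xi=\nu_{\rho_{\{\xi\}},\min}$ for every $\xi\in\widehat{E(S)}$. Unravelling the definition of $\nu_{\rho_{\{\xi\}},\min}$, the membership $(s,t)\in\nu_\xi$ in particular forces $(s^*s,t^*t)\in\rho_{\{\xi\}}$, that is, $\xi(s^*s)=\xi(t^*t)$. Since this holds for every character and $\widehat{E(S)}$ separates $E(S)$ --- for $e\in E(S)$ the ``point mass'' $\xi_e\colon E(S)\to\{0,1\}$ defined by $\xi_e(p)=1$ iff $p\geq e$ is a character with $\xi_e(e)=1$ and $\xi_e(f)=0$ whenever $ef\neq e$, exactly the type of character already used in Section 2 --- we get $s^*s=t^*t$. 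Put $e_0\defeq s^*s=t^*t$; as $S$ is Clifford we also have $ss^*=tt^*=e_0$, so $s,t\in H_{e_0}$ and in particular $se_0=s$, $te_0=t$.

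For the second step I would feed the specific character $\xi_{e_0}$ into the hypothesis. From $(s,t)\in\nu_{\xi_{e_0}}=\nu_{\rho_{\{\xi_{e_0}\}},\min}$ there is $e\in E(S)$ with $(e,s^*s)\in\rho_{\{\xi_{e_0}\}}$ and $se=te$; the first condition means $\xi_{e_0}(e)=\xi_{e_0}(e_0)=1$, i.e.\ $e\geq e_0$, so $e_0e=e_0$. Hence $se=(se_0)e=s(e_0e)=se_0=s$ and likewise $te=t$, and therefore $s=se=te=t$. This proves that $\Phi$ is injective, and the ``in particular'' conclusion follows as noted above.

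I do not expect any real obstacle here; the argument is short once the two-step reduction is in place. The only points that need care are the two uses of the point-mass characters $\xi_e$ --- that they are genuine characters (as in the proof of Lemma \ref{pull back of the spec of clif is fixed}) and that they separate the idempotents of $S$ --- and the implicit role of the Clifford hypothesis, which via Lemma \ref{pull back of the spec of clif is fixed} makes each singleton $\{\xi\}$ invariant, so that $\rho_{\{\xi\}}$ is a well-defined normal congruence on $E(S)$, $\nu_\xi$ a congruence on $S$, and $S/\nu_\xi$ a (possibly trivial) $0$-group.
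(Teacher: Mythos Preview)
Your proof is correct and takes essentially the same approach as the paper. The paper's version is more compressed: from $q_\xi(s)=q_\xi(t)$ for every $\xi$ it extracts an idempotent $e_\xi$ with $\xi(e_\xi)=1$ and $se_\xi=te_\xi$, reads this as $(s,t)\in\nu_{\Clif}$, and then uses that $\nu_{\Clif}$ is the identity congruence on a Clifford inverse semigroup---your explicit two-step argument (all characters to get $s^*s=t^*t$, then the principal character $\xi_{e_0}$ to finish) is precisely the unpacking of that implication.
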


\begin{proof}
Assume that $s,t\in S$ satisfy $\Phi(s)=\Phi(t)$.
Since $q_{\xi}(s)=q_{\xi}(t)$ holds for all $\xi\in S$,
there exists $e_{\xi}\in E(S)$ such that $\xi(e_{\xi})=1$ and $se_{\xi}=te_{\xi}$.
Therefore we have $(s,t)\in\nu_{\Clif}$.
Since we assume that $S$ is Clifford,
$s=t$ holds.
Thus $\Phi$ is injective.
\qed
\end{proof}

\begin{prop}\label{spectrum of a finitely generated Clifford inverse semigroup is finite}
	Let $S$ be a finitely generated Clifford inverse semigroup.
	Then $\widehat{E(S)}$ is a finite set.
\end{prop}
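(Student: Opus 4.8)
The plan is to reduce the statement to showing that the idempotent semilattice $E(S)$ is finite. This reduction is immediate: a character is determined by its values, so $\widehat{E(S)}$ embeds into $\{0,1\}^{E(S)}$, which is a finite set as soon as $E(S)$ is finite.

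To see that $E(S)$ is finite, I would fix a finite generating set $X=\{s_1,\dots,s_n\}$ of $S$ and put $e_i\defeq s_i^*s_i$. The key input is the classical fact that in a Clifford inverse semigroup the idempotents are central — equivalently, $S$ is a semilattice of groups (see \cite{petrich1984inverse}). Granting this, the map $\sigma\colon S\to E(S)$ defined by $\sigma(s)=s^*s$ is a semigroup homomorphism, since $(st)^*(st)=t^*(s^*s)t=(s^*s)(t^*t)$, using centrality of $s^*s$ and commutativity of idempotents. Now every element of $S$ is a word $w=x_1\cdots x_k$ with each $x_j\in X\cup X^*$; the Clifford condition gives $\sigma(s_i)=s_i^*s_i=e_i$ and $\sigma(s_i^*)=s_is_i^*=s_i^*s_i=e_i$, so $\sigma(w)=\sigma(x_1)\cdots\sigma(x_k)$ is a product of the $e_i$'s. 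Since $\sigma$ fixes every idempotent, this yields $E(S)=\sigma(S)=\langle e_1,\dots,e_n\rangle$, the subsemilattice generated by $e_1,\dots,e_n$.

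Finally I would invoke the elementary fact that a commutative idempotent semigroup generated by $n$ elements has at most $2^n$ elements, because each of its elements is the product of the generators over some subset of $\{1,\dots,n\}$ and only that subset matters. Hence $E(S)$ is finite, and therefore so is $\widehat{E(S)}$.

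The only step that is not pure bookkeeping with words in the generators is the centrality of idempotents in a Clifford inverse semigroup, i.e.\ the homomorphism property of $\sigma$; this is standard structure theory, so I would simply cite it, although it also follows in a couple of lines directly from the definition of a Clifford semigroup together with the commutativity of $E(S)$.
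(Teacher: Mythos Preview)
Your argument is correct, but it is genuinely different from the paper's. The paper does not show that $E(S)$ is finite. Instead it uses its earlier machinery: since $S$ is Clifford, every character on $E(S)$ is fixed (Lemma~\ref{pull back of the spec of clif is fixed}) and hence extends uniquely to a semigroup homomorphism $S\to\{0,1\}$ (Proposition~\ref{extension of fixed character}); such a homomorphism is determined by its values on a finite generating set $F$, so $\widehat{E(S)}$ injects into $\{0,1\}^F$.

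Your route is more elementary and self-contained: you use only the centrality of idempotents in a Clifford inverse semigroup to see that $s\mapsto s^*s$ is a homomorphism, and then bound $\lvert E(S)\rvert$ by $2^n$. This actually proves the stronger statement that $E(S)$ itself is finite, from which finiteness of $\widehat{E(S)}$ is immediate. The paper's approach, on the other hand, gives the sharper bound $\lvert\widehat{E(S)}\rvert\leq 2^{\lvert F\rvert}$ directly and fits naturally into the fixed-character framework developed in Section~2. Both arguments hinge on the same structural input (the Clifford condition), just exploited at different places: you use it to control $E(S)$, the paper uses it to control $\widehat{E(S)}$.
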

\begin{proof}
	Take a finite set $F\subset S$ which generates $S$.
	Let $X$ denote the set of all nonzero semigroup homomorphisms from $S$ to $\{0,1\}$.
	Then a map
	\[X \ni \xi \mapsto (\xi(f))_{f\in F}\in\{0,1\}^F\]
	is injective since $F$ generates $S$.
	By Proposition \ref{extension of fixed character} and Lemma \ref{pull back of the spec of clif is fixed},
	a map $X\ni\xi\mapsto \xi_{E(S)}\in \widehat{E(S)}$ is bijective.
	Since $\widehat{E(S)}$ is embedded into $\{0,1\}^F$,
	$\widehat{E(S)}$ is a finite set.
	\qed
\end{proof}

\begin{cor}
	Let $S$ be a finitely generated Clifford inverse semigroup.
	Then $S$ is embedded into a direct sum of finitely many 0-groups and groups.
\end{cor}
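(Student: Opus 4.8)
The plan is to obtain this as an immediate consequence of the two propositions that precede it. First I would apply to the Clifford inverse semigroup $S$ the embedding result just proved, which supplies the injective semigroup homomorphism
\[
\Phi\colon S\ni s\mapsto (q_{\xi}(s))_{\xi\in\widehat{E(S)}}\in \prod_{\xi\in\widehat{E(S)}} S/\nu_{\xi},
\]
where the factor indexed by $\xi=1$ is the maximal group image of $S$, hence a group, and each factor indexed by $\xi\neq 1$ is a $0$-group.

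Next I would invoke Proposition \ref{spectrum of a finitely generated Clifford inverse semigroup is finite}, which applies precisely because $S$ is finitely generated: it gives that the index set $\widehat{E(S)}$ is finite. Therefore the codomain of $\Phi$ is a product of inverse semigroups over a \emph{finite} index set, and over a finite index set the direct product coincides with the direct sum. Combining these two observations, $\Phi$ realises $S$ as a sub-inverse-semigroup of a direct sum of finitely many $0$-groups and groups, which is exactly the assertion.

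I do not anticipate any genuine obstacle here, since all of the substance is carried by the two cited results; the only point worth making explicit is the identification of a finite direct product with a finite direct sum, so that one may pass from the a priori infinite product appearing in the embedding proposition to a finite direct sum. If a formulation purely in terms of $0$-groups were desired, one could further adjoin a zero to the single group factor $S/\nu_{1}$, but this is unnecessary for the statement as phrased.
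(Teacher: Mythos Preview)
Your argument is correct and is exactly the intended one: the paper states this corollary without proof immediately after the embedding proposition and Proposition \ref{spectrum of a finitely generated Clifford inverse semigroup is finite}, and your proposal simply spells out how those two results combine. The only content is the finiteness of $\widehat{E(S)}$ turning the product into a finite direct sum, which you have made explicit.
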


Let $S$ be an inverse semigroup and $\xi\in\widehat{E(S)}$.
Recall that $G_u(S)_{\xi}$ is a discrete group.
In \cite{LaLonde2017},
the authors give a way to calculate $G_u(S)_{\xi}$.
Then $\xi^{-1}(\{1\})$ is a directed set with respect to the order inherited from $E(S)$.
For $e,f\in E(S)$ with $e\leq f$,
define a map $\varphi^f_e\colon S(f)\to S(e)$ by $\varphi^f_e(s)=se$ for $s\in S(f)$.
Then $\varphi^{f}_e$ is a group homomorphism.
One can see that $(S(e),\varphi^{f}_e)$ consists an inductive system of groups.
The authors proved the following.
\begin{thm}[{\cite[Theorem 3.1]{LaLonde2017}}] \label{Lalonde}
	Let $S$ be a Clifford inverse semigroup and $\xi\in\widehat{E(S)}$.
	Then there exists an isomorphism
	\[G_u(S)_{\xi} \simeq\varinjlim_{\xi(e)=1}S(e).\]	
\end{thm}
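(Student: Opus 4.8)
The plan is to build a concrete isomorphism from the colimit onto the isotropy group and to verify bijectivity by explicit manipulation of representatives. First I would record both sides. Since $S$ is Clifford, Lemma \ref{pull back of the spec of clif is fixed} (applied to the trivial congruence) shows every $\xi\in\widehat{E(S)}$ is a fixed character, so $G_u(S)_{\xi}$ is the isotropy group at $\xi$: concretely $G_u(S)_{\xi}=\{[s,\xi]\mid s\in S,\ \xi(s^*s)=1\}$, with product $[s,\xi][t,\xi]=[st,\xi]$, unit $\xi$, and inverse $[s,\xi]^{-1}=[s^*,\xi]$. On the other side, $S(e)$ is identified with the maximal subgroup $H_e=\{s\in S\mid s^*s=e\}$ at $e$, the connecting map is $\varphi^f_e\colon H_f\to H_e$, $s\mapsto se$ (for $e\le f$), and the colimit is $\varinjlim_{\xi(e)=1}S(e)=\bigl(\bigsqcup_{\xi(e)=1}H_e\bigr)/{\sim}$, where for $s\in H_e$ and $t\in H_f$ one has $s\sim t$ iff $sg=tg$ for some $g\in E(S)$ with $\xi(g)=1$, $g\le e$ and $g\le f$; here directedness of $\xi^{-1}(\{1\})$ makes $\sim$ transitive.

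For each $e$ with $\xi(e)=1$ I would define $\psi_e\colon S(e)\to G_u(S)_{\xi}$ by $\psi_e(s)=[s,\xi]$, which is legitimate since $\xi(s^*s)=\xi(e)=1$, and which is a group homomorphism because $[s,\xi][t,\xi]=[st,\xi]$. These maps form a cocone over the directed system: for $e\le f$ and $s\in H_f$ one has $\psi_e(\varphi^f_e(s))=[se,\xi]=[s,\xi]=\psi_f(s)$, the middle equality holding because $[se,\xi]$ and $[s,\xi]$ become equal after multiplying by the idempotent $e$, which satisfies $\xi(e)=1$. By the universal property of the colimit these maps induce a well-defined group homomorphism $\Psi\colon\varinjlim_{\xi(e)=1}S(e)\to G_u(S)_{\xi}$.

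It remains to see that $\Psi$ is bijective. Surjectivity: any element of $G_u(S)_{\xi}$ has the form $[s,\xi]$ with $\xi(s^*s)=1$, and then $s\in H_{s^*s}=S(s^*s)$ with $\xi(s^*s)=1$, and $\Psi$ carries the class of $s$ to $[s,\xi]$. Injectivity: if $s\in S(e)$, $t\in S(f)$ and $\Psi([s])=\Psi([t])$, i.e.\ $[s,\xi]=[t,\xi]$, then by the definition of $G_u(S)$ there is $h\in E(S)$ with $\xi(h)=1$ and $sh=th$; putting $g\defeq hef$ one gets $g\in E(S)$, $\xi(g)=\xi(h)\xi(e)\xi(f)=1$, $g\le e$, $g\le f$, and $sg=(sh)ef=(th)ef=tg$, so $\varphi^e_g(s)=\varphi^f_g(t)$ in $S(g)$; hence $[s]$ and $[t]$ already agree in $S(g)$ and define the same colimit class. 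Therefore $\Psi$ is an isomorphism.

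I do not expect a deep obstacle — the argument is a careful unwinding of definitions — but care is needed in bookkeeping the witnessing idempotents (both those appearing in the defining relation of $G_u(S)$ and those appearing in the colimit), in the preliminary identification of $S(e)$ with $H_e$ compatibly with the maps $\varphi^f_e$, and, for a complete proof, in checking that $\Psi$ preserves the group operations. The Clifford hypothesis enters only to guarantee that every character is fixed (so that $G_u(S)_{\xi}$ is genuinely a group) and that idempotents are central (so that $H_e$ is a group and passing to common indices is harmless).
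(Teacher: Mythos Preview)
Your argument is correct. The paper does not actually prove this theorem: it is quoted as \cite[Theorem 3.1]{LaLonde2017} and used as a black box, with the paper instead supplying the companion computation (Proposition~\ref{calculation of ind lim}) that identifies the inductive limit with $S(\xi)$. So there is no ``paper's own proof'' to compare against.

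Your direct verification is the natural one and all steps check out. A couple of minor points you might tighten: when you verify that $\psi_e$ is a group homomorphism, it is worth noting explicitly that $st\in H_e$ for $s,t\in H_e$, which uses centrality of idempotents (so $(st)^*(st)=t^*et=et^*t=e$); and in the cocone check the witnessing idempotent can simply be $e$ itself, since $(se)e=se=s\cdot e$. Otherwise the bookkeeping with the idempotent $g=hef$ in the injectivity step is exactly right.
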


We give a way to realize $\varinjlim_{\xi(e)=1}S(e)$ as a quotient of $S$.

\begin{prop} \label{calculation of ind lim}
	Let $S$ be a Clifford inverse semigroup and $\xi\in\widehat{E(S)}$.
	Then we have the following isomorphism; 
	\[\varinjlim_{\xi(e)=1}S(e)\simeq S(\xi).\]
	
\end{prop}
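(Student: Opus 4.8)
The plan is to identify both sides with the isotropy group $G_u(S)_\xi$ and invoke Theorem \ref{Lalonde}. First I would observe that by Theorem \ref{Lalonde} applied to the Clifford inverse semigroup $S$, there is an isomorphism $G_u(S)_\xi \simeq \varinjlim_{\xi(e)=1} S(e)$, so it suffices to prove $G_u(S)_\xi \simeq S(\xi)$. Now recall from Proposition \ref{restriction by min congruence} that $G_u(S/\nu_{\rho_\xi,\min})$ is isomorphic to $G_u(S)_{F_{\rho_\xi}}$, and since $F_{\rho_\xi} = F_{\rho_{\{\xi\}}} = \{\xi\}$ (using Proposition \ref{F=F_nu_F}, as $\{\xi\}$ is a unital multiplicative invariant closed set when $S$ is Clifford, by Lemma \ref{pull back of the spec of clif is fixed}), we get $G_u(S/\nu_\xi) \simeq G_u(S)_{\{\xi\}} = G_u(S)_\xi$.

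Next I would compute the left-hand side $G_u(S/\nu_\xi)$ directly as a group. When $\xi \neq 1$, the inverse semigroup $S/\nu_\xi$ is the $0$-group $S(\xi) \cup \{0\}$, whose semilattice of idempotents is $\{0,1\}$; its spectrum $\widehat{E(S/\nu_\xi)}$ consists of the single character sending $1 \mapsto 1$ and $0 \mapsto 0$, so $G_u(S/\nu_\xi)$ has a one-point unit space and is therefore the group $\{[s,\ast] \mid s \in S(\xi)\}$, where $[s,\ast] = [t,\ast]$ precisely when $s,t$ agree after multiplying by some idempotent with value $1$ — but the only such idempotent in $S/\nu_\xi$ is the unit $1$, so $[s,\ast]=[t,\ast]$ iff $s = t$. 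Hence $G_u(S/\nu_\xi) \simeq S(\xi)$ via $[s,\ast] \mapsto s$, with the groupoid product matching the group product of $S(\xi)$. When $\xi = 1$, the same argument applies with $S/\nu_\xi$ the maximal group image $S(1)$: its only idempotent is the unit, $\widehat{E(S/\nu_\xi)}$ is a single point, and $G_u(S/\nu_\xi) \simeq S(1) = S(\xi)$.

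Combining the two displays gives $S(\xi) \simeq G_u(S/\nu_\xi) \simeq G_u(S)_\xi \simeq \varinjlim_{\xi(e)=1} S(e)$, which is the claim. The main obstacle I anticipate is bookkeeping around the identification $F_{\rho_\xi} = \{\xi\}$ and the precise description of the groupoid $G_u(T)$ for a $0$-group or group $T$: one must check carefully that a $0$-group really does have singleton spectrum and that the equivalence relation defining $S*X/{\sim}$ collapses to equality of semigroup elements, so that the groupoid operations transport exactly to the group operations of $S(\xi)$. Everything else is a direct application of Theorem \ref{Main theorem}, Proposition \ref{restriction by min congruence}, and Theorem \ref{Lalonde}.
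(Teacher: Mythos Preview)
Your approach has a genuine error when $\xi \neq 1$. You claim that $\{\xi\}$ is a unital multiplicative invariant closed set and then invoke Proposition \ref{F=F_nu_F} to conclude $F_{\rho_\xi} = \{\xi\}$. But ``unital'' in this paper means that the set contains the constant character $1$; Lemma \ref{pull back of the spec of clif is fixed} only tells you that $\xi$ is a \emph{fixed} character (hence that $\{\xi\}$ is invariant), not that $\{\xi\}$ is unital. And the conclusion is actually false: for $\xi \neq 1$ one has $E(S)/\rho_\xi \cong \{0,1\}$, and $\widehat{\{0,1\}}$ has \emph{two} elements (the identity map and the constant $1$), so $F_{\rho_\xi} = \{\xi, 1\}$. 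For exactly the same reason, your assertion that the $0$-group $S/\nu_\xi$ has a singleton spectrum is wrong; its unit space has two points. The repair is easy: restrict the isomorphism $G_u(S/\nu_\xi) \simeq G_u(S)_{\{\xi,1\}}$ coming from Proposition \ref{restriction by min congruence} to the fibre over $\xi$, which corresponds on the quotient side to the character on $\{0,1\}$ sending $0 \mapsto 0$; your computation of that isotropy group then goes through verbatim and yields $G_u(S)_\xi \simeq S(\xi)$.

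Even with this fix, your route is quite different from the paper's. The paper gives a short direct argument using no groupoid machinery at all: the quotient map $q\colon S \to S/\nu_\xi$ restricts to group homomorphisms $\sigma_e \colon S(e) \to S(\xi)$ for each $e$ with $\xi(e)=1$, these are compatible with the transition maps $\varphi^f_e$, and the induced map $\widetilde{\sigma}\colon \varinjlim S(e) \to S(\xi)$ is then checked to be an isomorphism. In particular Theorem \ref{Lalonde} is not used; rather, the paper combines this proposition with Theorem \ref{Lalonde} afterward to obtain Corollary \ref{calculation of isotropy group}, which is precisely your intermediate statement $G_u(S)_\xi \simeq S(\xi)$. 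So your argument inverts the paper's logical flow, establishing the corollary first via Proposition \ref{restriction by min congruence} and then deducing the proposition from it.
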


\begin{proof}
	Let $q\colon S\to S(\xi)$ denote the quotient map and put $\Gamma\defeq \varinjlim S(e)$.
	For $e\in E(S)$ with $\xi(e)=1$,
	we define $\sigma_e\colon S(e)\to S(\xi)$ by $\sigma_e(s)\defeq q(s)$.
	We obtain a group homomorphism $\widetilde{\sigma}\colon \Gamma \to S(\xi)$.
	One can see that $\widetilde{\sigma}$ is an isomorphism.
	\qed
\end{proof}

Combining Theorem \ref{Lalonde} with Proposition \ref{calculation of ind lim},
we obtain the next Corollary.
\begin{cor}\label{calculation of isotropy group}
	Let $S$ be a Clifford inverse semigroup and $\xi\in\widehat{E(S)}$.
	Then $G_u(S)_{\xi}$ is isomorphic to $S(\xi)$.
\end{cor}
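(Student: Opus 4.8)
The plan is simply to chain together the two isomorphisms that have already been established. By Theorem~\ref{Lalonde} there is a group isomorphism $G_u(S)_{\xi}\simeq\varinjlim_{\xi(e)=1}S(e)$, and by Proposition~\ref{calculation of ind lim} there is an isomorphism $\varinjlim_{\xi(e)=1}S(e)\simeq S(\xi)$. Composing the two yields at once $G_u(S)_{\xi}\simeq S(\xi)$, which is the assertion. So at the level of the stated corollary there is essentially nothing left to do beyond citing the previous two results.

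If one instead wanted a direct argument that bypasses the inductive limit, the natural route would be to write down the isomorphism $G_u(S)_{\xi}\to S(\xi)$ explicitly. First I would note that, since $S$ is Clifford, $\{\xi\}\subset\widehat{E(S)}$ is invariant by Lemma~\ref{pull back of the spec of clif is fixed}, so $G_u(S)_{\xi}$ is a genuine isotropy group and $S/\nu_{\xi}$ is defined with $S(\xi)$ its group of units (or $S/\nu_{\xi}$ itself when $\xi=1$). I would then send the class $[s,\xi]\in G_u(S)_{\xi}$ to $q_{\xi}(s)\in S(\xi)$, where $q_{\xi}\colon S\to S/\nu_{\xi}$ is the quotient map. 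Well-definedness holds because $[s,\xi]=[t,\xi]$ means $se=te$ for some $e\in E(S)$ with $\xi(e)=1$, which is precisely a defining relation of $\nu_{\xi}=\nu_{\rho_{\xi},\min}$; multiplicativity and the fact that the image lands in $S(\xi)$ are routine. Injectivity runs the same relation backwards, and surjectivity follows because any element of $S/\nu_{\xi}$ lying over $\xi$ is represented by some $s\in S$ with $\xi(s^*s)=1$.

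I do not expect a real obstacle here: the only mild subtlety, namely matching the equivalence relation defining $G_u(S)_{\xi}$ with the congruence $\nu_{\xi}$, is exactly the bookkeeping that has already been absorbed into Proposition~\ref{calculation of ind lim}. Consequently the cleanest proof is the first one — invoke Theorem~\ref{Lalonde} and Proposition~\ref{calculation of ind lim} and compose the isomorphisms — and I would present it that way.
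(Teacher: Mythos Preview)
Your proposal is correct and matches the paper's own proof, which simply states that the corollary follows by combining Theorem~\ref{Lalonde} with Proposition~\ref{calculation of ind lim}. The additional direct argument you sketch is sound but goes beyond what is needed.
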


Let $I$ be a discrete set and $\{\Gamma_i\}_{i\in I}$ be a family of discrete group.
Then the disjoint union $\coprod_{i\in I}\Gamma_i$ is a discrete group bundle over $I$ in the natural way.
Using Proposition \ref{spectrum of a finitely generated Clifford inverse semigroup is finite} and Corollary \ref{calculation of isotropy group},
we obtain the next corollary.

\begin{cor}\label{universal groupoid associated to finitely generated Clifford inverse semigroup}
	Let $S$ be a finitely generated Clifford inverse semigroup.
	Then there exists an isomorphism
	\[
	G_u(S)\simeq \coprod_{\xi\in\widehat{E(S)}} S(\xi).
	\]
\end{cor}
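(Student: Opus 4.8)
The plan is to assemble the statement from the three ingredients already available: the description of $\widehat{E(S)}$ for a finitely generated Clifford inverse semigroup as a finite discrete space (Proposition \ref{spectrum of a finitely generated Clifford inverse semigroup is finite}), the identification of each isotropy group of $G_u(S)$ with $S(\xi)$ (Corollary \ref{calculation of isotropy group}), and the fact that for a Clifford inverse semigroup every character is a fixed point (Lemma \ref{pull back of the spec of clif is fixed}), so that $G_u(S)$ is a group bundle over its unit space.

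First I would note that since $S$ is Clifford, $\widehat{E(S)}_{\fix}=\widehat{E(S)}$ by Lemma \ref{pull back of the spec of clif is fixed}; equivalently, in the groupoid $G_u(S)$ every unit is a fixed point, so $d=r$ on all of $G_u(S)$ and $G_u(S)=\Iso(G_u(S))$ is a group bundle over $\widehat{E(S)}$. Next, by Proposition \ref{spectrum of a finitely generated Clifford inverse semigroup is finite}, $\widehat{E(S)}$ is finite, hence discrete, so the bundle projection $d\colon G_u(S)\to\widehat{E(S)}$ decomposes $G_u(S)$ as the disjoint union $\coprod_{\xi\in\widehat{E(S)}}G_u(S)_\xi$ of its fibres, each of which is clopen. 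Then I would invoke Corollary \ref{calculation of isotropy group} to identify each fibre $G_u(S)_\xi$ with the group $S(\xi)$; since $\widehat{E(S)}$ is discrete, this fibrewise isomorphism is automatically a homeomorphism, and it is a groupoid isomorphism because multiplication in a group bundle is performed fibrewise. Concatenating these fibrewise isomorphisms over the finite index set yields the desired isomorphism $G_u(S)\simeq\coprod_{\xi\in\widehat{E(S)}}S(\xi)$ of discrete group bundles.

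The only point requiring a little care is matching the topology: one must check that the evident bijection $G_u(S)\to\coprod_{\xi}S(\xi)$ is continuous with continuous inverse. This is where finiteness of $\widehat{E(S)}$ does the work — each $S(\xi)$ receives the discrete topology and sits inside the disjoint union as a clopen piece, while in $G_u(S)$ the fibre $G_u(S)_\xi=d^{-1}(\{\xi\})$ is clopen because $\{\xi\}$ is clopen in $\widehat{E(S)}$; so it suffices that each restriction $G_u(S)_\xi\to S(\xi)$ be a homeomorphism, which follows from Corollary \ref{calculation of isotropy group} together with the observation that $G_u(S)_\xi$, being a fibre of an étale group bundle over a discrete space, carries the discrete topology. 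I expect no real obstacle here; the substance of the result is entirely carried by the earlier propositions, and this final corollary is essentially a bookkeeping step that reassembles the local data over the finite spectrum.
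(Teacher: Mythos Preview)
Your proposal is correct and follows exactly the approach the paper indicates: the paper's entire proof is the one-line remark that the corollary follows from Proposition \ref{spectrum of a finitely generated Clifford inverse semigroup is finite} and Corollary \ref{calculation of isotropy group}, and your write-up simply spells out the bookkeeping (finiteness gives discreteness, Clifford gives a group bundle, fibrewise isomorphisms glue) that the paper leaves implicit.
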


For an \'etale groupoid $G$ with the locally compact Hausdorff unit space $G^{(0)}$,
we write $C^*(G)$ (resp.\ $C^*_{\lambda}(G)$) to represent the universal (resp.\ reduced) groupoid C*-algebra of $G$ (see \cite{paterson2012groupoids} for the definitions).
Corollary \ref{universal groupoid associated to finitely generated Clifford inverse semigroup} immediately implies the next corollary.

\begin{cor}
	Let $S$ be a finitely generated Clifford inverse semigroup.
	Then we have the two isomorphisms
	\[
	C^*(G_u(S))\simeq \bigoplus_{\xi\in\widehat{E(S)}} C^*(S(\xi)),  C^*_{\lambda}(G_u(S))\simeq \bigoplus_{\xi\in\widehat{E(S)}} C^*_{\lambda}(S(\xi)).
	\]

\end{cor}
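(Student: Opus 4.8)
The plan is to reduce everything to Corollary \ref{universal groupoid associated to finitely generated Clifford inverse semigroup}, which gives an isomorphism of topological groupoids $G_u(S)\simeq\coprod_{\xi\in\widehat{E(S)}}S(\xi)$. By Proposition \ref{spectrum of a finitely generated Clifford inverse semigroup is finite} the unit space $\widehat{E(S)}$ is finite, hence discrete, so each group $S(\xi)$ sits inside $G_u(S)$ as a clopen subgroupoid with one-point unit space $\{\xi\}$, and $G_u(S)$ is the disjoint union of these finitely many clopen subgroupoids. Since the groupoid C*-algebra constructions depend only on the isomorphism class of the groupoid, it suffices to compute $C^*$ and $C^*_{\lambda}$ of $\coprod_{\xi}S(\xi)$.

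First I would record the general fact that if an \'etale groupoid $G$ with locally compact Hausdorff unit space is a finite disjoint union $G=\bigsqcup_{i}G_i$ of clopen subgroupoids, then $C_c(G)=\bigoplus_i C_c(G_i)$ as $*$-algebras: functions supported on distinct $G_i$ have disjoint, open, invariant ranges and sources, so their convolution vanishes. This algebraic splitting is compatible with the universal norm, because $*$-representations of $C_c(G)$ correspond to tuples of $*$-representations of the $C_c(G_i)$, and with the reduced norm, because the regular representation of $G$ decomposes as the direct sum of the regular representations of the $G_i$ (arrows composable with a unit of $G_i$ all lie in $G_i$). Hence $C^*(G)\simeq\bigoplus_i C^*(G_i)$ and $C^*_{\lambda}(G)\simeq\bigoplus_i C^*_{\lambda}(G_i)$; finiteness of the index set means these are honest finite C*-direct sums.

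Next I would use that for a discrete group $\Gamma$, viewed as an \'etale groupoid with a single unit, the groupoid convolution algebra is the group ring $\C[\Gamma]$ and the universal (resp.\ reduced) groupoid C*-algebra is exactly the full group C*-algebra $C^*(\Gamma)$ (resp.\ the reduced group C*-algebra $C^*_{\lambda}(\Gamma)$); this is immediate from unwinding the definitions. Applying this with $\Gamma=S(\xi)$ for each $\xi\in\widehat{E(S)}$ and combining with the previous paragraph yields
\[
C^*(G_u(S))\simeq\bigoplus_{\xi\in\widehat{E(S)}}C^*(S(\xi)),\qquad C^*_{\lambda}(G_u(S))\simeq\bigoplus_{\xi\in\widehat{E(S)}}C^*_{\lambda}(S(\xi)).
\]
The only point requiring care---rather than a genuine obstacle---is the first step, namely verifying that a clopen decomposition of the groupoid passes to a direct-sum decomposition of both the full and the reduced C*-algebra; this is a standard and essentially formal verification, and since $\widehat{E(S)}$ is finite there are no completion subtleties to worry about.
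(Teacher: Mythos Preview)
Your proposal is correct and follows essentially the same approach as the paper: the paper simply states that the corollary is immediate from Corollary \ref{universal groupoid associated to finitely generated Clifford inverse semigroup}, and you have supplied the (standard) details of why a finite disjoint-union decomposition of an \'etale groupoid into discrete groups yields a direct-sum decomposition of both the full and reduced groupoid C*-algebras. Your argument is a faithful expansion of what the paper leaves implicit.
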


\subsection{Free Clifford inverse semigroups}
We investigate universal groupoids and C*-algebras associated to free Clifford inverse semigroups on finite sets.

Firstly, we recall the definition of the free groups.
\begin{defi}
	Let $X$ be a set.
	A free group on $X$ is a pair $(\mathbb{F}(X), \kappa)$ of a group $\FIS(X)$ and a map $\iota\colon X\to \mathbb{F}(X)$ such that
	\begin{enumerate}
		\item $\kappa(X)$ generates $\mathbb{F}(X)$, and
		\item for every group $\Gamma$ and a map $\varphi\colon X\to \Gamma$, 
		there exists a semigroup homomorphism $\widetilde{\varphi}\colon \mathbb{F}(X)\to \Gamma$ such that $\widetilde{\varphi}(x)=\varphi(\kappa(x))$ holds for all $x\in X$.
	\end{enumerate}
\end{defi}

We define free inverse semigroups in the similar way.

\begin{defi}
	Let $X$ be a set.
	A free inverse semigroup on $X$ is a pair $(\FIS(X), \iota)$ of an inverse semigroup $\FIS(X)$ and a map $\iota\colon X\to \FIS(X)$ such that
	\begin{enumerate}
	\item $\iota(X)$ generates $\FIS(X)$, and
	\item for every inverse semigroup $T$ and map $\varphi\colon X\to T$, 
	there exists a semigroup homomorphism $\widetilde{\varphi}\colon \FIS(X)\to T$ such that $\widetilde{\varphi}(x)=\varphi(\iota(x))$ holds for all $x\in X$.
\end{enumerate}
\end{defi}

It is known that a free inverse semigroup uniquely exists up to the canonical semigroup homomorphism.
See \cite[Section 6.1]{Lawson} for the existence of a free inverse semigroup.
The uniqueness is obvious.

\begin{defi}
A free Clifford inverse semigroup on $X$ is a pair $(\FCIS(X),\iota)$ of a Clifford inverse semigroup $\FCIS(X)$ and a map $\iota\colon X\to\FCIS(X)$ such that
\begin{enumerate}
	\item $\iota(X)$ generates $\FCIS(X)$, and
	\item for every Clifford inverse semigroup $T$ and map $\varphi\colon X\to T$, 
	there exists a semigroup homomorphism $\widetilde{\varphi}\colon \FCIS(X)\to T$ such that $\widetilde{\varphi}(x)=\varphi(\iota(x))$ holds for all $x\in X$.
\end{enumerate}
\end{defi}

A free Clifford inverse semigroup uniquely exists up to the canonical isomorphism.
Indeed, for a free Clifford inverse semigroup $(\FIS(X),\iota)$ and the quotient map $q\colon \FIS(X)\to \FIS(X)^{\Clif}$,
one can see that $(\FIS(X)^{\Clif}, q\circ\iota)$ is a free Clifford inverse semigroup on $X$.
The uniqueness is obvious.

Let $X$ be a set.
For $A\subset X$,
define a map $\chi_A\colon X\to \{0,1\}$ by
\begin{align*}
	\chi_A(x)=
	\begin{cases}
		1 & (x\in A), \\
		0 & (x\not\in A).
	\end{cases}
\end{align*}
Since $\{0,1\}$ is Clifford,
$\chi_A$ is extended to the semigroup homomorphism from $\FCIS(X)$ to $\{0,1\}$,
which we also denote by $\chi_A$.
Every semigroup homomorphism from $\FCIS(X)$ to $\{0,1\}$ is given by the form $\chi_A$ with unique $A\subset X$.

By Proposition \ref{extension of fixed character},
$\chi_A|_{E(\FCIS(X))}$ is a fixed character if $A$ is not empty.
By Lemma \ref{pull back of the spec of clif is fixed},
all characters on $E(\FCIS(X))$ are fixed characters.
Therefore we obtain the next proposition.

\begin{prop}\label{spectrum of FCIS}
Let $X$ be a finite set.
Put $S=\FCIS(X)$.
Then a map 
\[ P(X)\setminus \{\emptyset\}\ni A\mapsto \chi_A|_{E(S)}\in \widehat{E(S)} \]
is bijective.
\end{prop}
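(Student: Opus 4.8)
The plan is to exhibit an explicit two-sided inverse (or, more economically, to prove injectivity and surjectivity separately) for the map $A\mapsto \chi_A|_{E(S)}$, relying entirely on the universal property of $\FCIS(X)$ and on the characterization of fixed characters already established. First I would record that the map is well-defined: for a finite set $X$ and $S=\FCIS(X)$, every subset $A\subseteq X$ gives $\chi_A\colon X\to\{0,1\}$, which extends (since $\{0,1\}$ is Clifford and $\FCIS(X)$ is free Clifford) to a semigroup homomorphism $\chi_A\colon S\to\{0,1\}$; the restriction $\chi_A|_{E(S)}$ is nonzero because $\chi_A(\iota(x)^*\iota(x))=1$ whenever $x\in A$, and when $A=\emptyset$ we have excluded it precisely because $\chi_\emptyset|_{E(S)}$ would be identically zero and hence not a character. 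So the map genuinely lands in $\widehat{E(S)}$ and is defined on $P(X)\setminus\{\emptyset\}$.

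For surjectivity I would argue as follows. Take any $\xi\in\widehat{E(S)}$. By Lemma~\ref{pull back of the spec of clif is fixed}, since $S$ is Clifford we have $\widehat{E(S)}_{\fix}=\widehat{E(S)}$, so $\xi$ is a fixed character; by Proposition~\ref{extension of fixed character} it extends uniquely to a semigroup homomorphism $\widetilde{\xi}\colon S\to\{0,1\}$. Now set $A\defeq\{x\in X\mid \widetilde{\xi}(\iota(x))=1\}$. Then $\widetilde{\xi}$ and $\chi_A$ are two semigroup homomorphisms $S\to\{0,1\}$ that agree on the generating set $\iota(X)$, hence agree on all of $S$ by the uniqueness clause in the universal property of $\FCIS(X)$ (equivalently, because $\iota(X)$ generates $S$). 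Restricting to $E(S)$ gives $\xi=\widetilde{\xi}|_{E(S)}=\chi_A|_{E(S)}$. Moreover $A\neq\emptyset$: otherwise $\widetilde{\xi}(\iota(x))=0$ for every generator, forcing $\widetilde{\xi}$ (and thus $\xi$) to vanish identically, contradicting that $\xi$ is a character. Hence the map is onto.

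For injectivity, suppose $\chi_A|_{E(S)}=\chi_B|_{E(S)}$ for nonempty $A,B\subseteq X$. For each $x\in X$ we have $\chi_A(\iota(x))=\chi_A(\iota(x)^*\iota(x))=\chi_A|_{E(S)}(\iota(x)^*\iota(x))$, using that $\chi_A(\iota(x))\in\{0,1\}$ is idempotent; the same holds for $B$. Hence $\chi_A(\iota(x))=\chi_B(\iota(x))$ for all $x$, i.e.\ $x\in A\iff x\in B$, so $A=B$. This also reproves that every semigroup homomorphism $S\to\{0,1\}$ has the form $\chi_A$ with $A$ uniquely determined, as asserted in the paragraph preceding the proposition.

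None of the steps is a serious obstacle; the only point requiring a little care is making sure the empty set is correctly excluded on both sides — that $\chi_\emptyset|_{E(S)}$ is not a character and, conversely, that a genuine character cannot pull back to $A=\emptyset$ — and that "agreement on generators implies agreement everywhere" is invoked in the form licensed by the definition of $\FCIS(X)$ (a homomorphism out of $\FCIS(X)$ is determined by its values on $\iota(X)$ since $\iota(X)$ generates). The finiteness of $X$ is used only to keep $P(X)\setminus\{\emptyset\}$ a finite set, matching the finiteness of $\widehat{E(S)}$ from Proposition~\ref{spectrum of a finitely generated Clifford inverse semigroup is finite}; it plays no role in the bijectivity argument itself.
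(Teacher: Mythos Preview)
Your proof is correct and follows essentially the same approach as the paper: the paper reduces the statement to the observations made in the paragraph immediately preceding it (every semigroup homomorphism $S\to\{0,1\}$ is a unique $\chi_A$; every character on $E(S)$ is fixed by Lemma~\ref{pull back of the spec of clif is fixed} and hence extends by Proposition~\ref{extension of fixed character}), and you have simply written out these implications in full. Your remark that finiteness of $X$ plays no role in the bijectivity itself is also accurate.
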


We identify $\chi_A|_{E(\FCIS(X))}$ with $\chi_A$,
since we can recover $\chi_A$ from the restriction $\chi_A|_{E(\FCIS(X))}$.

For a nonempty set $A\subset X$,
define $e_A\defeq \prod_{x\in A} \iota(x)^*\iota(x)\in E(\FCIS(X))$.
For $e\in E(\FCIS(X))$,
the condition that $\chi_A(e)=1$ is equivalent to the condition that $e\geq e_A$.
Using this fact,
one can prove the next proposition.  
\begin{prop}
A map \[ P(X)\setminus\{\emptyset\} \ni A\mapsto e_A\in E(\FCIS(X))\] is bijective.
\end{prop}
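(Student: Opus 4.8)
The plan is to deduce surjectivity of $A\mapsto e_A$ from the observation that $E(\FCIS(X))$ is generated, as a subsemigroup, by the idempotents $\iota(x)^*\iota(x)$ ($x\in X$), and to deduce injectivity by pairing the elements $e_A$ against the characters $\chi_C$. Write $S=\FCIS(X)$. The case $X=\emptyset$ is vacuous, since then $S$ is empty and $P(X)\setminus\{\emptyset\}=\emptyset$, so I would assume $X\neq\emptyset$ from the start.

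For surjectivity I would argue as follows. Given $e\in E(S)$, use that $\iota(X)$ generates $S$ as an inverse semigroup to write $e=\iota(x_1)^{\varepsilon_1}\cdots\iota(x_n)^{\varepsilon_n}$ with $\varepsilon_i\in\{1,*\}$ and $n\geq1$. Since $e=e^*=e^2$ we have $e=ee^*=\iota(x_1)^{\varepsilon_1}\cdots\iota(x_n)^{\varepsilon_n}\iota(x_n)^{-\varepsilon_n}\cdots\iota(x_1)^{-\varepsilon_1}$, and I would then repeatedly peel off the innermost factor $\iota(x_k)^{\varepsilon_k}\iota(x_k)^{-\varepsilon_k}$: this is an idempotent, hence central in the Clifford inverse semigroup $S$, so it can be pulled out to the front, and iterating gives $e=\prod_{i}\iota(x_i)^{\varepsilon_i}\iota(x_i)^{-\varepsilon_i}$. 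Because $\iota(x)\iota(x)^*=\iota(x)^*\iota(x)$ in a Clifford inverse semigroup, each factor equals $\iota(x_i)^*\iota(x_i)$ regardless of the sign $\varepsilon_i$, and since repeated idempotent factors collapse the product depends only on the set $A\defeq\{x_1,\dots,x_n\}$. Thus $e=e_A$ with $A$ a nonempty subset of $X$.

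For injectivity I would compute $\chi_C(e_A)$ for nonempty $A,C\subseteq X$: as $\chi_C$ is a semigroup homomorphism into $\{0,1\}$ and $z^*z=z$ there, $\chi_C(e_A)=\prod_{x\in A}\chi_C(\iota(x))=\prod_{x\in A}\chi_C(x)$, which is $1$ exactly when $A\subseteq C$ and $0$ otherwise; this is precisely the equivalence ``$\chi_A(e)=1\iff e\geq e_A$'' recorded before the statement, evaluated at $e=e_C$. Hence if $e_A=e_B$ with $A,B$ nonempty, then taking $C=A$ gives $\chi_A(e_B)=\chi_A(e_A)=1$, so $B\subseteq A$, and symmetrically $A\subseteq B$; therefore $A=B$. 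Alternatively one may invoke Proposition~\ref{spectrum of FCIS}, which says the $\chi_C$ are all of the characters of $E(S)$ and hence separate its points, together with the displayed formula, which recovers $A$ from $e_A$.

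I expect the only genuinely delicate step to be the peeling argument for surjectivity: one must be careful to invoke centrality of the idempotent semilattice of the Clifford inverse semigroup $S$ at each stage, and to check that the resulting product of the $\iota(x_i)^*\iota(x_i)$ really does depend only on the underlying index set and not on the chosen word. The character computation and the reduction of the $X=\emptyset$ case are routine.
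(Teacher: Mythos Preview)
Your proof is correct. The paper itself supplies no argument here; it only records the equivalence $\chi_A(e)=1\Leftrightarrow e\ge e_A$ and asserts that the proposition follows. Your injectivity argument is exactly what that hint yields when specialised to $e=e_B$, so on that half you agree with the paper.

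For surjectivity you take a more self-contained route: writing $e=ee^{*}$ as a palindromic word in the generators and peeling off the central idempotent $\iota(x_k)^{\varepsilon_k}\iota(x_k)^{-\varepsilon_k}$ at each stage. This is cleaner than trying to extract surjectivity from the stated equivalence together with Proposition~\ref{spectrum of FCIS}: that route would require showing that the up-set $\{A:e\ge e_A\}\subset P(X)\setminus\{\emptyset\}$ is principal, which is not immediate without already knowing the structure of $E(S)$, and in fact your peeling argument is essentially what establishes the nontrivial implication $\chi_A(e)=1\Rightarrow e\ge e_A$ in the first place. The one step worth making fully explicit is the standard fact that in a Clifford inverse semigroup the idempotents are central (see e.g.\ \cite{petrich1984inverse} or \cite{Lawson}); the paper only records the defining condition $s^{*}s=ss^{*}$, so a line of justification there would make the peeling airtight.
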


In order to apply Corollary \ref{calculation of isotropy group} for free Clifford inverse semigroups,
we prepare the next proposition.

\begin{prop}\label{S(chiA) is the free group}
	Let $X$ be a set and $A\subset X$ be a nonempty set.
	Put $S=\FCIS(X)$.
	Then $S(\chi_A)$ is isomorphic to the free group $\mathbb{F}(A)$ generated by $A$.
\end{prop}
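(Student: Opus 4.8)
The plan is to exhibit $S(\chi_A)$ as a group satisfying the universal property of $\mathbb{F}(A)$. Recall that $S(\chi_A) = S/\nu_{\chi_A}$ where $\nu_{\chi_A} = \nu_{\rho_{\{\chi_A\}},\min}$, and that $S = \FCIS(X)$ is generated by $\iota(X)$. First I would observe that in the quotient $S(\chi_A)$, the element $q_{\chi_A}(\iota(x))$ has domain idempotent $q_{\chi_A}(\iota(x)^*\iota(x))$, which equals $q_{\chi_A}(e_A)$, the identity of the group $S(\chi_A)$, precisely when $\chi_A(\iota(x)^*\iota(x)) = 1$, i.e. when $x \in A$; and for $x \notin A$ the image $q_{\chi_A}(\iota(x))$ is the zero element. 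So $S(\chi_A)$ is generated as a group by $\{q_{\chi_A}(\iota(x)) \mid x \in A\}$, establishing the generation axiom of a free group on $A$ (via the map $\kappa\colon A \to S(\chi_A)$, $x \mapsto q_{\chi_A}(\iota(x))$).

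Next I would verify the universal property. Let $\Gamma$ be a group and $\varphi\colon A \to \Gamma$ any map. Form the $0$-group $\Gamma \cup \{0\}$, which is a Clifford inverse semigroup, and extend $\varphi$ to a map $\psi\colon X \to \Gamma \cup \{0\}$ by setting $\psi(x) = \varphi(x)$ for $x \in A$ and $\psi(x) = 0$ for $x \notin A$. By the universal property of $\FCIS(X)$, there is a semigroup homomorphism $\widetilde{\psi}\colon S \to \Gamma \cup \{0\}$ extending $\psi$. One checks $\widetilde{\psi}$ sends every idempotent $e$ with $e \geq e_A$ (equivalently $\chi_A(e) = 1$) to the identity of $\Gamma$ and all other idempotents to $0$; consequently $\widetilde{\psi}$ factors through the congruence $\nu_{\chi_A}$, giving a semigroup homomorphism $S(\chi_A) = S/\nu_{\chi_A} \to \Gamma \cup \{0\}$, which restricts to a group homomorphism $S(\chi_A) \to \Gamma$ carrying $\kappa(x) = q_{\chi_A}(\iota(x))$ to $\varphi(x)$. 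This homomorphism is unique since $\kappa(A)$ generates $S(\chi_A)$. Hence $(S(\chi_A), \kappa)$ satisfies the defining property of $\mathbb{F}(A)$, and by uniqueness of free groups, $S(\chi_A) \cong \mathbb{F}(A)$.

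The main obstacle I anticipate is the bookkeeping in showing $\widetilde{\psi}$ respects $\nu_{\chi_A}$: one must use the explicit description of $\nu_{\chi_A} = \nu_{\rho_{\{\chi_A\}},\min}$, namely that $(s,t) \in \nu_{\chi_A}$ iff $(s^*s, t^*t) \in \rho_{\{\chi_A\}}$ and $se = te$ for some $e \in E(S)$ with $(e, s^*s) \in \rho_{\{\chi_A\}}$, and then check that $\widetilde{\psi}(s) = \widetilde{\psi}(t)$ follows — the case analysis splits according to whether $\chi_A(s^*s) = 1$ or $0$, and in the former case one uses that $\widetilde{\psi}(e)$ is the identity so $\widetilde{\psi}(s) = \widetilde{\psi}(se) = \widetilde{\psi}(te) = \widetilde{\psi}(t)$. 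Alternatively, one can sidestep some of this by invoking Corollary \ref{calculation of isotropy group} to identify $S(\chi_A) \cong G_u(S)_{\chi_A}$ and argue with the inductive limit $\varinjlim_{\chi_A(e)=1} S(e)$ from Theorem \ref{Lalonde}, but the direct universal-property argument above seems cleanest.
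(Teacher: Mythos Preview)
Your proposal is correct and follows essentially the same idea as the paper: use the universal property of $\FCIS(X)$ to produce a semigroup homomorphism into a $0$-group (sending $\iota(x)\mapsto 0$ for $x\notin A$), then check this map respects $\nu_{\chi_A}$. The packaging differs slightly: the paper takes $\Gamma=\mathbb{F}(A)$ specifically and builds explicit mutually inverse maps $\tau\colon\mathbb{F}(A)\to S(\chi_A)$ and $\widetilde\sigma|_{S(\chi_A)}\colon S(\chi_A)\to\mathbb{F}(A)$, whereas you verify the universal property of the free group for $(S(\chi_A),\kappa)$ against an arbitrary target $\Gamma$. Your version avoids the final ``check these are inverse'' step, at the cost of having to argue that $\kappa(A)$ generates $S(\chi_A)$. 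Either way the substantive step---factoring through $\nu_{\chi_A}$---is the same.

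One small point in your favor: your case-split on $\chi_A(s^*s)\in\{0,1\}$ is actually more careful than the paper's argument. The paper asserts that $(s,t)\in\nu_{\chi_A}$ implies $se_A=te_A$, but this is not literally true when $\chi_A(s^*s)=0$ (for instance, with $X=\{a,b,c\}$, $A=\{a\}$, the pair $(e_{\{b\}},e_{\{c\}})$ lies in $\nu_{\chi_A}$ yet $e_{\{b\}}e_A=e_{\{a,b\}}\neq e_{\{a,c\}}=e_{\{c\}}e_A$). Your handling of that case via $\widetilde\psi(s^*s)=0\Rightarrow\widetilde\psi(s)=0$ is the right fix; the paper's conclusion $\sigma(s)=\sigma(t)$ is still correct for the same reason, it just skips this case.
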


\begin{proof}
	If $X=A$,
	$S(\chi_A)$ is the maximal group image of $S$.
	Therefore $S(\chi_A)$ is isomorphic to $\mathbb{F}(A)$.
	
	We assume $A\subsetneq X$.
	Let $Q\colon S\to S(\chi_A)$ denote the quotient map.
	By the universality of $\mathbb{F}(A)$,
	define a group homomorphism $\tau\colon \mathbb{F}(A)\to S(\chi_A)$ such that
	$\tau(\kappa(a))=Q(\iota(a))$ for all $a\in A$.
	We construct the inverse map of $\tau$.
	Using the universality of $S=\FCIS(X)$,
	define a semigroup homomorphism $\sigma\colon S \to \mathbb{F}(A)\cup\{0\}$ which satisfies
	\begin{align*}
		\sigma(\iota(x))=
		\begin{cases}
			\kappa(x) & (x\in A) \\
			0 & (x\not\in A)
		\end{cases}
	\end{align*}
	for $x\in X$.
	We claim that $(s,t)\in \nu_{\chi_A}$ implies $\sigma(s)=\sigma(t)$ for $s,t\in S$.
	By $(s,t)\in \nu_{\chi_A}$, we have $se_A=te_A$.
	Since $\sigma(e_A)$ is the unit of $\mathbb{F}(A)$,
	we have $\sigma(s)=\sigma(t)$.
	Therefore we obtain a semigroup homomorphism $\widetilde{\sigma}\colon S(\chi_A)\to \mathbb{F}(A)\cup\{0\}$ which makes the following diagram commutative;
		\begin{center}
		\begin{tikzpicture}[auto]
		\node (a) at (0,0) {$S$};
		\node (c) at (3,0){$\mathbb{F}(A)\cup \{0\}$};
		\node (d) at (0,-2) {$S(\chi_A)\cup\{0\}$};
		\draw[->] (a) to node {$\sigma$} (c) ;
		\draw[->,swap] (a) to node {$Q$} (d);
		\draw[->,swap] (d) to node {$\widetilde{\sigma}$} (c);
		\end{tikzpicture}
		.
	\end{center}
	Now one can verify that $\widetilde{\sigma}|_{S(\chi_A)}$ is the inverse map of $\tau$.
	\qed 
\end{proof}

Now we have the following Theorem.

\begin{thm}
	Let $X$ be a finite set.
	Then there exists an isomorphism
	\[ G_u(\FCIS(X)) \simeq \coprod_{A\in P(X)\setminus\{\emptyset\}} \mathbb{F}(A).\]
\end{thm}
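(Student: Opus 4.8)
The plan is to combine three facts already established in the previous subsection and Section 3. Since $X$ is finite, $S\defeq\FCIS(X)$ is a finitely generated Clifford inverse semigroup, its generating set being the finite set $\iota(X)$. Hence Proposition \ref{spectrum of a finitely generated Clifford inverse semigroup is finite} applies and $\widehat{E(S)}$ is a finite, hence discrete, space, and Corollary \ref{universal groupoid associated to finitely generated Clifford inverse semigroup} yields an isomorphism of \'etale groupoids
\[ G_u(S)\simeq \coprod_{\xi\in\widehat{E(S)}} S(\xi), \]
where the right-hand side carries the discrete group bundle structure over $\widehat{E(S)}$ with fiber the discrete group $S(\xi)$ over $\xi$ (and by Corollary \ref{calculation of isotropy group} this $S(\xi)$ is indeed the isotropy group $G_u(S)_\xi$).

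Next I would feed in the description of the index set and of the fibers. By Proposition \ref{spectrum of FCIS}, the map $P(X)\setminus\{\emptyset\}\ni A\mapsto \chi_A|_{E(S)}\in\widehat{E(S)}$ is a bijection; both sides being finite discrete, it is a homeomorphism, so reindexing the coproduct along it turns $\coprod_{\xi}S(\xi)$ into $\coprod_{A\in P(X)\setminus\{\emptyset\}}S(\chi_A)$ as \'etale group bundles. Finally, Proposition \ref{S(chiA) is the free group} identifies each fiber $S(\chi_A)$ with the free group $\mathbb{F}(A)$. Since every group in sight is discrete and the base is finite discrete, these fiberwise isomorphisms assemble over the base into an isomorphism of discrete group bundles
\[ \coprod_{A\in P(X)\setminus\{\emptyset\}}S(\chi_A)\simeq \coprod_{A\in P(X)\setminus\{\emptyset\}}\mathbb{F}(A). \]
Composing the three displayed isomorphisms gives the asserted isomorphism $G_u(\FCIS(X))\simeq\coprod_{A\in P(X)\setminus\{\emptyset\}}\mathbb{F}(A)$.

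I do not expect a genuine obstacle: all the substantive work is contained in the cited propositions. The only point requiring care is the bookkeeping, namely that the reindexing bijection of Proposition \ref{spectrum of FCIS} and the fiberwise isomorphisms of Proposition \ref{S(chiA) is the free group} are compatible, so that they glue to a single groupoid isomorphism rather than to a merely fiberwise one. Because $\widehat{E(S)}$ is finite and discrete, this gluing is automatic, and the proof reduces to writing out the chain of identifications and citing the appropriate results in order.
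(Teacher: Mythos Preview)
Your proposal is correct and follows essentially the same route as the paper: both use Proposition~\ref{spectrum of FCIS} to identify $\widehat{E(S)}$ with $P(X)\setminus\{\emptyset\}$ (hence finite), decompose $G_u(S)$ as a finite disjoint union of isotropy groups, and then invoke Proposition~\ref{S(chiA) is the free group} to identify each fiber with $\mathbb{F}(A)$. The only cosmetic difference is that you package the first two steps by citing Corollary~\ref{universal groupoid associated to finitely generated Clifford inverse semigroup} directly, whereas the paper writes out the disjoint-union decomposition $G_u(S)\simeq\coprod_A G_u(S)_{\chi_A}$ explicitly (and tacitly uses Corollary~\ref{calculation of isotropy group} to pass from $G_u(S)_{\chi_A}$ to $S(\chi_A)$).
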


\begin{proof}
	Put $S=\FCIS(X)$.
	By Proposition \ref{spectrum of FCIS},
	it follows that
	\[\widehat{E(S)}=\{\chi_A\in\widehat{E(S)} \mid A\in P(X)\setminus\{\emptyset\}\} \] 
	is a finite set.
	Therefore we have an isomorphism
	\[G_u(S)\simeq \coprod_{A\in P(X)\setminus\{\emptyset\}}G_u(S)_{\chi_A}.\]
	By Proposition \ref{S(chiA) is the free group},
	we obtain the isomorphism in the statement.
	\qed
\end{proof}


\subsection{Fixed points of Boolean actions}

From \cite[Section 5]{STEINBERG2010689},
we recall the notion of Boolean actions.
By a locally compact Boolean space,
we mean a locally compact Hausdorff space which admits a basis of compact open sets.
Let $S$ be an inverse semigroup and $X$ be a locally compact Boolean space.
An action $\alpha\colon S\curvearrowright X$ is said to be Boolean if
\begin{enumerate}
	\item for all $e\in E(S)$, $D^{\alpha}_{e}\subset X$ is a compact open set of $X$, and
	\item a family
	\[\bigg\{D_e^{\alpha}\cup \bigcup_{f\in P}(X\setminus D_f^{\alpha})\mid e\in E(S), \text{$P\subset E(S)$ is a finite set.}\bigg\}\] forms a basis of $X$.  
\end{enumerate}

It is known that $G_u(S)$ has the universality about Boolean actions as follows.

\begin{thm}[{\cite[Proposition 5.5]{STEINBERG2010689}}] \label{universality of Gu(S)}
	Let $S$ be an inverse semigroup, $X$ be a Boolean space and $\alpha\colon S\curvearrowright X$ be a Boolean action.
	Then $S\ltimes_{\alpha} X$ is isomorphic to $G_u(S)_F$ for some closed invariant subset $F\subset \widehat{E(S)}$. 
\end{thm}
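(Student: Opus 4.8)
The plan is to realize the given Boolean space $X$ as a closed invariant subset of $\widehat{E(S)}$ via the natural ``character'' map and then transport the groupoid structure along it.

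\textbf{Step 1: the comparison map.} For $x\in X$ define $\pi(x)\colon E(S)\to\{0,1\}$ by $\pi(x)(e)=1$ if $x\in D^{\alpha}_e$ and $\pi(x)(e)=0$ otherwise. Because $\alpha$ is an action one has $D^{\alpha}_{ef}=D^{\alpha}_e\cap D^{\alpha}_f$, so $\pi(x)$ is multiplicative, and it is nonzero since we always assume $\bigcup_{e\in E(S)}D^{\alpha}_e=X$. Thus $\pi(x)\in\widehat{E(S)}$, and $\pi\colon X\to\widehat{E(S)}$ is continuous because $\pi^{-1}(N^{e}_P)=D^{\alpha}_e\cap\bigcap_{p\in P}(X\setminus D^{\alpha}_p)$ is open.

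\textbf{Step 2: $\pi$ is a closed embedding with invariant image.} Injectivity uses condition (2) of a Boolean action: given $x\neq y$, pick a basic open set $D^{\alpha}_e\cup\bigcup_{f\in P}(X\setminus D^{\alpha}_f)$ containing $x$ but not $y$; checking the two possible reasons this set contains $x$ but excludes $y$ shows that $e$, or some $f\in P$, is separated by $\pi(x)$ and $\pi(y)$. Next $\pi$ is proper: a compact $K\subset\widehat{E(S)}$ is covered by finitely many of the open sets $D^{\beta}_e$, say $K\subset\bigcup_{i=1}^{n}D^{\beta}_{e_i}$, and since $\pi^{-1}(D^{\beta}_e)=D^{\alpha}_e$, the preimage $\pi^{-1}(K)$ is a closed subset of the compact set $\bigcup_{i=1}^{n}D^{\alpha}_{e_i}$, hence compact. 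A proper continuous map into a locally compact Hausdorff space is closed, so $\pi$ is a continuous closed injection, i.e.\ a homeomorphism onto the closed set $F\defeq\pi(X)\subset\widehat{E(S)}$. Finally a direct computation, using commutativity of $E(S)$, gives $\pi(\alpha_s(x))=\beta_s(\pi(x))$ for all $s\in S$ and $x\in D^{\alpha}_{s^*s}$; this shows simultaneously that $F$ is invariant and that $\pi$ is equivariant.

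\textbf{Step 3: the groupoid isomorphism.} Define $\Psi\colon S\ltimes_{\alpha}X\to S\ltimes_{\beta}F=G_u(S)_F$ by $\Psi([s,x])=[s,\pi(x)]$. This is well defined (if $se=te$ with $x\in D^{\alpha}_e$ then $\pi(x)(e)=1$) and, using equivariance of $\pi$, a groupoid homomorphism; it is bijective because $\pi$ is injective (giving injectivity of $\Psi$) and maps $X$ onto the invariant set $F$ (giving surjectivity of $\Psi$). Since $\Psi|_{(S\ltimes_\alpha X)^{(0)}}=\pi$ is a homeomorphism onto $G_u(S)_F^{(0)}=F$, Proposition \ref{condition that groupoid hom is conti} shows $\Psi$ is a local homeomorphism, and a bijective local homeomorphism is a homeomorphism. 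Hence $S\ltimes_{\alpha}X\cong G_u(S)_F$ with $F$ closed and invariant, as claimed.

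\textbf{Expected main obstacle.} The verifications that $\pi$ is a well-defined equivariant continuous map and that $\Psi$ is a groupoid homomorphism are routine; the genuinely delicate point is that $F=\pi(X)$ must be \emph{closed} in $\widehat{E(S)}$, i.e.\ that $\pi$ is proper. This is exactly where local compactness of the Boolean space and compactness of the domains $D^{\alpha}_e$ are essential — without them one would only obtain an open invariant set rather than a closed one, and the identification with $G_u(S)_F$ would fail.
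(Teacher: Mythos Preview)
The paper does not supply its own proof of this statement; it is quoted without argument from \cite[Proposition~5.5]{STEINBERG2010689}. Your proof is correct and follows the standard route (which is essentially Steinberg's): realize $X$ inside $\widehat{E(S)}$ via the character map $\pi(x)(e)=1\Leftrightarrow x\in D^{\alpha}_e$, use condition~(2) of a Boolean action for injectivity and condition~(1) (compactness of the $D^{\alpha}_e$) for properness, check equivariance, and then lift $\pi$ to the groupoid isomorphism $\Psi$ using Proposition~\ref{condition that groupoid hom is conti}. Your identification of properness of $\pi$---hence closedness of $F$---as the one nontrivial step is exactly right.
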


\begin{cor} \label{the number of fixed point}
	Let $S$ be a finitely generated inverse semigroup and $\alpha\colon S\curvearrowright X$ be a Boolean action.
	Then $\alpha$ has finitely many fixed points.
	More precisely,
	the number of fixed points of $\alpha$ is equal to or less than the number of nonzero semigroup homomorphisms from $S$ to $\{0,1\}$.   
\end{cor}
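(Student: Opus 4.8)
The plan is to combine the universality of $G_u(S)$ for Boolean actions (Theorem \ref{universality of Gu(S)}) with the fact that fixed points of the spectral action correspond exactly to semigroup homomorphisms into $\{0,1\}$ (Proposition \ref{extension of fixed character}). First I would invoke Theorem \ref{universality of Gu(S)} to produce a closed invariant subset $F\subset\widehat{E(S)}$ together with an isomorphism of \'etale groupoids $S\ltimes_\alpha X\cong G_u(S)_F$. Since a groupoid isomorphism restricts to a homeomorphism of unit spaces and preserves the invariance of subsets of the unit space, it carries fixed points to fixed points bijectively; hence the number of fixed points of $\alpha$ equals the number of fixed points of the restricted groupoid $G_u(S)_F$, which in turn is the number of $\xi\in F$ that are fixed points of $G_u(S)$, i.e.\ $\xi$ with $\{\xi\}\subset\widehat{E(S)}$ invariant.

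Next I would identify these $\xi$ with the fixed characters in the sense of the definition preceding Proposition \ref{extension of fixed character}: a point $\xi\in\widehat{E(S)}$ is a fixed point of $G_u(S)$ precisely when $\beta_s(\xi)=\xi$ for every $s\in S$ with $\xi(s^*s)=1$, which unwinds to $\xi(s^*es)=\xi(e)$ for all such $s$ and all $e\in E(S)$ — exactly the condition $\xi\in\widehat{E(S)}_{\fix}$. By Proposition \ref{extension of fixed character}, each such $\xi$ extends uniquely to a semigroup homomorphism $\widetilde{\xi}\colon S\to\{0,1\}$, and conversely every nonzero semigroup homomorphism $S\to\{0,1\}$ restricts to a fixed character; this gives a bijection between $\widehat{E(S)}_{\fix}$ and the set of nonzero semigroup homomorphisms $S\to\{0,1\}$. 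Therefore the number of fixed points of $\alpha$ equals the cardinality of $F\cap\widehat{E(S)}_{\fix}$, which is at most the number of nonzero semigroup homomorphisms $S\to\{0,1\}$.

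Finally I would bound that number when $S$ is finitely generated: if $F_0\subset S$ is a finite generating set, then a semigroup homomorphism $\varphi\colon S\to\{0,1\}$ is determined by its values on $F_0$, so the restriction map to $\{0,1\}^{F_0}$ is injective and there are at most $2^{\lvert F_0\rvert}$ such homomorphisms. Hence $\alpha$ has finitely many fixed points, with the stated sharper bound. I do not anticipate a serious obstacle here; the one point requiring a little care is the first step — checking that the isomorphism supplied by Theorem \ref{universality of Gu(S)} genuinely matches fixed points on the two sides, i.e.\ that "fixed point of the groupoid $S\ltimes_\alpha X$" in the topological-dynamical sense coincides with "fixed point" in the groupoid sense of the excerpt (a singleton invariant subset of the unit space), which is immediate from the definitions but should be stated.
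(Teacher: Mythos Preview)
Your proposal is correct and follows essentially the same route as the paper: invoke Theorem \ref{universality of Gu(S)} to realize $S\ltimes_\alpha X$ as $G_u(S)_F$, identify the fixed points there with $F\cap\widehat{E(S)}_{\fix}$, use Proposition \ref{extension of fixed character} to biject $\widehat{E(S)}_{\fix}$ with nonzero homomorphisms $S\to\{0,1\}$, and bound the latter by finite generation. The paper's proof is simply a terser version of exactly this argument, leaving implicit the points you carefully spell out (that the groupoid isomorphism matches fixed points, and that a homomorphism to $\{0,1\}$ is determined by its values on a generating set).
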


\begin{proof}
	Since we assume that $S$ is finitely generated,
	the set of all nonzero semigroup homomorphisms from $S$ to $\{0,1\}$ is a finite set.
	By Proposition \ref{extension of fixed character},
	there exists a bijection between the set of all nonzero semigroup homomorphisms from $S$ to $\{0,1\}$ and $\widehat{E(S)}_{\fix}$.
	Now Theorem \ref{universality of Gu(S)} completes the proof.
	\qed
\end{proof}

\begin{ex}[cf.\ {\cite[Example 3 in Section 4.2]{paterson2012groupoids}}]
	For a natural number $n\in\N$,
	the Cuntz inverse semigroup $S_n$ is an inverse semigroup which is generated by $\{0,1, s_1,\dots,s_n\}$
	with the relation
	\[
	s_i^*s_j= \delta_{i,j}1
	\]
	for all $i,j\in \{1,2,\dots,n\}$.
	Define $\xi\colon S_n\to \{0,1\}$ by $\xi(x)=1$ for all $x\in S_n$.
	Then $\xi$ is the unique nonzero semigroup homomorphism from $S_n$ to $\{0,1\}$.
	Since $0\in S_n$,
	$\xi$ is an isolated point of $\widehat{E(S)}$.
	Therefore,
	every Boolean action of $S_n$ has at most one fixed point,
	which becomes an isolated point.
\end{ex}

\bibliographystyle{plain}
\bibliography{bunken20180828}

\end{document}